\documentclass[11pt,reqno,twoside]{amsart}

\usepackage{amsmath}
\usepackage{amsfonts}
\usepackage{amssymb}

\usepackage{epsfig}

\usepackage{xcolor}

\setlength{\topmargin}{0 in}
\setlength{\oddsidemargin}{0 in}
\setlength{\evensidemargin}{0 in}
\setlength{\textwidth}{6.4 in}
\setlength{\textheight}{8.6 in}


\numberwithin{equation}{section}

\newif\ifdraft\drafttrue

\draftfalse

\newcommand{\br}{{\mathbb{R}}}
\newcommand{\bz}{{\mathbb{Z}}}
\newcommand{\m}{{m}}
\newcommand{\n}{{n}}



\newcommand{\vt}{{\bf{t}}}

\newcommand\hd{Hausdorff dimension}
\newcommand\ba{badly approximable}

\newcommand\da{Diophantine approximation}

\newcommand\ssm{\smallsetminus}


\newcommand\eq[2]{{\ifdraft{\ \tt [#1]}\else\ignorespaces\fi}\begin{equation}\label{eq:#1}{#2}\end{equation}}
\newcommand {\equ}[1]     {\eqref{eq:#1}}

\newcommand{\phid}{\overset{ \ \scriptscriptstyle\wedge_k}{ \varphi\ }}
\newcommand{\phione}{\overset{ \ \scriptscriptstyle\wedge_1}{ \varphi\ }}
\newcommand{\psitwo}{\overset{ \ \scriptscriptstyle\wedge_2}{ \psi\ }}

\newcommand{\R}{{\mathbb {R}}}
\newcommand{\Z}{{\mathbb {Z}}}
\newcommand{\N}{{\mathbb {N}}}

\newcommand{\Lip}{{\operatorname{Lip}}}

\newcommand{\Ad}{{\operatorname{Ad}}}
\newcommand{\ad}{{\operatorname{ad}}}

\newcommand{\SL}{\operatorname{SL}}

\newcommand{\ggm}{G/\Gamma}

\newcommand{\diam}{\operatorname{diam}}
\newcommand{\dist}{\operatorname{dist}}
\newcommand{\diag}{\operatorname{diag}}
\newcommand{\supp}{\operatorname{supp}}

\newcommand{\Lie}{\operatorname{Lie}}

\newcommand {\ignore}[1]  {}

\newcommand\hs{homogeneous space}

\newcommand{\ggplus}{{\underset{+}{\gg}}}


\newcommand\cic{C^\infty_{comp}}

\newcommand{\df}{{\, \stackrel{\mathrm{def}}{=}\, }}

\newcommand{\x}{{\mathbf{x}}}
\newcommand{\0}{{\mathbf{0}}}
\newcommand{\vv}{{\bf{v}}}
\newcommand{\vs}{{\bf{j}}}

\newcommand{\vy}{{\bf y}}

\newcommand{\vr}{{\bf i}}
\newcommand{\vm}{\mathbf{m}}

\newcommand{\Bad}{\mathrm{Bad}}

\newcommand{\vn}{{\bf n}}
\newcommand{\vw}{{\bf w}}

\newcommand{\p}{{\bf p}}

\newcommand{\q}{{\mathbf{q}}}

\newcommand{\vq}{{\bf q}}

\newcommand {\comm}[1]   {\textcolor{red}{#1}}
\newcommand {\gr}[1]     {\textcolor{green}{#1}}

\DeclareMathOperator{\codim}{codim}
\DeclareMathOperator{\spn}{span}

\newcommand{\fa}{\frak a}

\newcommand{\vre}{\varepsilon}

\newcommand\nz{\smallsetminus \{0\}}

\newtheorem{thm}{Theorem}[section]
\newtheorem{lem}[thm]{Lemma}
\newtheorem{prop}[thm]{Proposition}
\newtheorem{cor}[thm]{Corollary}

\newtheorem{remark}[thm]{Remark}
\newtheorem{defn}[thm]{Definition}

\oddsidemargin=5mm
\textwidth=5.5in
\topmargin=0mm
\voffset=-13mm
\headheight=5mm \headsep=9mm
\textheight=9.1in \footskip=5mm

\begin{document}
\title[Dimension estimates
for  non-dense orbits in homogeneous spaces]
{Dimension estimates for the set of points with non-dense orbit in
homogeneous spaces}
\author{Dmitry Kleinbock}
\address{Department of Mathematics, Brandeis University, Waltham MA}
\email{kleinboc@brandeis.edu}

\author{Shahriar Mirzadeh}
\address{Department of Mathematics, Michigan State University, East Lansing MI}

 \email{mirzade2@msu.edu}

\begin{abstract} 
Let $X = \ggm$, where $G$ is a Lie group and $\Gamma$ is a lattice in $G$, and let $U$ be a subset of $X$ whose complement is compact. We use the exponential mixing results for
diagonalizable flows on  $X$ to 
give upper estimates for  the Hausdorff
dimension of the set of points whose trajectories miss $U$. 
This extends a recent result of
Kadyrov \cite{K} and produces new applications to \da, such as 
an upper bound for the Hausdorff dimension of the
set of weighted uniformly badly approximable systems of linear forms, generalizing 
an estimate due to  Broderick and  Kleinbock  \cite{BK}.
\end{abstract}

\thanks{Supported in part by NSF grant  DMS-1600814.}

\subjclass{Primary: 37A17, 37A25; Secondary: 11J13.}
\date{July 13, 2019}

\maketitle

\section{Introduction}

{Throughout the paper, we let  $G$ be a Lie group and $\Gamma $   a 
lattice in $G$,
denote by $X$ the homogeneous space  $G/\Gamma$ and by $\mu$   the
$G$-invariant probability measure on $X$.   The notation $${A\gg B} \quad{(\text{resp., }A\,\ggplus\, B)},$$ where  $A$ and $B$ are quantities depending on certain parameters, will mean ${A \ge  CB}$  {(resp., $A \ge  CB + D$)}, where {$C,D$ are constants} 
dependent  only on $X$ and $F$.
}
Let ${F^+:= }{({g_t})_{t
\ge 0}}$ be a one-parameter 
{sub}semigroup of $G$.
%
%
{Following \cite{K}, for any subset $U$ of $X$  define the set 
\eq{set} {E(F^+,U) :=  \{ x \in X: \overline{F^+x}
 \cap U = \varnothing\}}}
of points in $X$ whose $F^+$-orbits stay away from $U$.
If the flow $(X,{\mu,}\,g_t)$ is ergodic, then the orbit ${\{ {g_t}x\}
_{t \ge 0}}$ is dense for {$\mu$-}almost all $x \in X$; hence $\mu\big(E(F^+,U)\big) = 0
$ whenever $U$ is non-empty.

{{A natural} question one can  ask is: how large can this set of  
measure zero be?  If the semigroup $F^+$ is \emph{quasiunipotemt}, that is, all eigenvalues of $\Ad\, g_1$ have absolute value $1$, then, whenever the action is ergodic and $U$ is non-empty, the set \equ{set} is contained in a countable union of proper submanifolds of $X$ -- this follows from Ratner's Measure Classification Theorem and the work of Dani and Margulis,   {see \cite[Lemma 21.2]{St} and  \cite[Proposition 2.1]{DM}}. On the other hand, if $F^+$ is not quasiunipotemt and $U = \{z\}$ for some $z\in X$, it is shown in \cite{K1} that the set \equ{set} has full \hd.

Fix a {right-invariant} Riemannian {structure  on $G$, and denote by
`${\dist}$' the corresponding Riemannian metric},
  {using} the same notation for the induced metric on $X$. {Also} denote by $B(r)$  the open ball of radius $r$ centered at the identity element of $G$, and  by $B(z,r)$   the open ball of radius $r$ centered at $z\in X$.  The aforementioned result of \cite{K1} can thus be stated as
\eq{dimlimit}{
\dim E\big(F^+,B(z,r)\big) \to \dim X \text{ as }  {r\to 0} .}
Here and hereafter $\dim E$ means the \hd\ of the set $E$, and $\codim E$ will stand for its Hausdorff codimension, i.e.\ the difference between the dimension of the ambient set and the \hd\ of $E$. 
{Until recently a {problem of estimating} the left hand side of  \equ{dimlimit}, or more generally,  the quantity $\dim E(F^+,U)$ where $U$ is a non-empty open subset of $X$, has not been addressed. In \cite{BK} Broderick and the first named author considered the case
\eq{slmn}{G = \SL_{m+n }(
\R), \ \Gamma = \SL_{m+n }( \Z), \ {X = \ggm},
}
with the action of $F^+ = (g_t)_{t\ge 0}$ where
\eq{gt}{g_t = \diag(e^{t/\m }, \ldots, e^{ t/\m }, e^{-t/\n },
\ldots,
e^{-t/\n })\,,
}
This action is important because of its Diophantine applications. In particular, a system of linear forms is badly approximable if and only if  (see \cite{dani}) the $g_t$-trajectory of a certain element of $X$ does not enter the set
 \eq{uepsilon}{ U(\vre) := \big\{g\Gamma \in  X: {\delta(g\Gamma)}
 < \vre 
 \big\}}
 for some $\vre > 0$, where 
 \eq{defdelta}{\delta(g\Gamma) := \inf_{\vv\in\Z^{m+n}\nz}\|g\vv\|.}
It was essentially\footnote{\cite[Theorem 1.3]{BK} is stated in a number-theoretic language; however it readily implies \equ{slmnbound}.in view of  \cite[Lemma 3.1]{BK}.
Note that
recently a precise asymptotic formula for the left hand side of  \equ{slmnbound} was obtained by  Simmons \cite{Si}: 
namely, that as $\vre \to 0$, the ratio $\frac{\codim 
 E\left(F^+,U(\vre)\right)}{\vre^{m+n}}$ tends to a constant depending only on $m,n$.
 }
  shown there  that for
  all $\vre > 0$ one has
 \eq{slmnbound}{{\codim 
 E\big(F^+,U(\vre)\big)  {\,\gg}\ }\frac{{\vre^{m+n}}}{{\log (1/\vre)}}.}
The main ingredient of the proof in \cite{BK} was the exponential mixing of the $g_t$-action on $X$ (see \S\ref{h} for the definition).
This theme was continued by Kadyrov in \cite{K}, where an estimate similar to \equ{slmnbound} was proved for the \hd\ of $E\big(F^+,B(z,r)\big) $ under the assumptions that $X= \ggm$ is compact and the $F^+$-action on $X$ is exponentially mixing. Namely, it is shown there that  there  exist
$r_0 >0$ i
such that for any $r \in (0,{r_0})$ and any $z
\in X$ 
one has
\eq{shiralibound}{{ \codim 
E\big(F^+,B(z,r)\big) {\,\gg}\  }\frac{{r^{\dim X}}}{{\log (1/r)}}\,.}}
{In the present paper we strengthen Kadyrov's result in two ways: by considering more general open sets $U$ in place of balls $B(z,r)$, and by relaxing the assumption of compactness of $X$ to that of  compactness of $X\ssm U$. Our main theorem generalizes results from both \cite{BK} and \cite{K} and can be used to produce new applications to \da.}

{We need to introduce the following notation: for a subset $U$ of $X$ and $r > 0$ denote by $\sigma_rU$ the \emph{inner $r$-core\/} of $U$, defined as
$$
\sigma_rU :=  \{x\in X: {\dist}(x,U^c) > r\},
$$}
and by  $\partial_rU$ the \emph{$r$-neighborhood\/}   of $U$ by 
 $$
\partial_rU :=  \{x\in X: {\dist}(x,U) < r\}.
$$Also, for {$x\in X$ denote by $\pi_x$ the map  $G\to X$ given by $\pi_x(g) := gx$, and by  $r_0(x)$ the \textsl{injectivity radius} of $x$: $$
{r_0(x) :=}\,\sup\{r > 0: 
\pi_x\text{ is injective on }B(r)\}.$$
If $K\subset X$ is bounded, let us denote by $r_0(K)$ the \textsl{injectivity radius} of $K$: $$
r_0(K) := \inf_{x\in K}r_0(x) = \sup\{r > 0: 
\pi_x\text{ is injective on }B(r)\  \ \forall\,x\in K\}.$$}

Here is the main result of the paper:
\begin{thm}
\label{Main Theorem} Let $G$ be a Lie group, $\Gamma $   a 
lattice in $G$, $X = G/\Gamma$, and let ${F^+}$ be a one-parameter {$\Ad$-}diagonalizable
{sub}semigroup of $G$ 
whose action on $X$ is exponentially mixing.
Then there {exists $ r' > 0$}
such that for any 
  $U\subset X$ such that $U^c$ is compact and
any
{$\,0 < r < \min\big(r_0(\partial_{1}U^c)
,
 r'\big )
 $}     
one has 
 \eq{mainbound}{ {
 \codim E(F^+,U)  {\,\gg}\ }\frac{{\mu ({\sigma_{r}U)}}}{{\log (1/r) + \log (1/\mu ({\sigma_{r}U))}}}\,.}
\end{thm}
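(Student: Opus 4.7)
The plan is to adapt the Bowen-ball covering strategy of \cite{BK} and \cite{K} to our more general setting: replace the ball $B(z,r)$ by the set $U$, and localize the entire construction inside the compact set $\partial_1 U^c$ in order to handle non-compactness of $X$. Using the $\Ad$-diagonalizable structure of $F^+$, decompose the Lie algebra as $\g = \g^+ \oplus \g^0 \oplus \g^-$, where $\g^\pm$ are sums of eigenspaces of $\Ad g_1$ with eigenvalues of modulus greater than and less than $1$ respectively; let $\lambda^+ > 0$ be the minimum expansion rate on $\g^+$. I will call a set of the form $W(s)x := \exp(B^+(s)+B^0(s))\exp(B^-(s))x$ an \emph{admissible box} at $x$ of sidelength $s$; for $s < r_0(x)$ this is an injective chart, and the $g_{-T}$-pullback of such a box is anisotropic, with sidelength $s e^{T\lambda^+}$ in $\g^+$ and $s e^{T|\lambda^-|}$ in $\g^-$.

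The core local estimate is a one-step mixing lemma. For $V = W(r)x$ with $x\in\partial_1 U^c$, apply exponential mixing to $C^\infty$ approximations of $\mathbf{1}_V$ and $\mathbf{1}_{\sigma_r U}$ whose Sobolev norms of a fixed order $d_0$ are $\ll r^{-d_0}$, obtaining
\[
\mu\bigl(\{y\in V : g_T y \notin U\}\bigr) \le \bigl(1 - \mu(\sigma_r U) + C e^{-\eta T} r^{-d_0}\bigr)\mu(V),
\]
where $\eta > 0$ is the exponential mixing rate and $C$ depends only on the ambient data. The choice
\[
T \asymp \eta^{-1}\bigl(\log(1/r) + \log(1/\mu(\sigma_r U))\bigr)
\]
absorbs the error into $\tfrac12\mu(\sigma_r U)$, so every admissible $r$-box loses at least this fraction of its mass under one time-$T$ step.

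I would then iterate. Cover $U^c$ by $O(r^{-\dim X})$ admissible boxes $V_0$ at scale $r$ with centers in $\partial_1 U^c$, and at stage $k$ refine each surviving box into sub-boxes of shape $(re^{-kT\lambda^+},r,r)$ in $(\g^+,\g^0,\g^-)$-coordinates, chosen so that $g_{kT}$ maps each sub-box back to an admissible box of sidelength $r$ centered in $\partial_1 U^c$. The one-step estimate bounds the total $\mu$-mass of survivors at stage $N$ by $(1-\tfrac12\mu(\sigma_r U))^N$ times the initial mass, and by volume considerations the survivor count is
\[
\#\{\text{sub-boxes surviving at stage }N\} \ll r^{-\dim X}\, e^{NT(\dim\g^+)\lambda^+}\bigl(1-\tfrac12\mu(\sigma_r U)\bigr)^N .
\]
Each sub-box has thickness $\rho := r e^{-NT\lambda^+}$ in $\g^+$ and diameter $\ll r$ in the remaining directions, so is covered by $\ll (r/\rho)^{\dim X - \dim\g^+}$ balls of radius $\rho$. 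A direct computation then bounds the $s$-dimensional Hausdorff sum by
\[
r^{s-\dim X}\bigl(1-\tfrac12\mu(\sigma_r U)\bigr)^N e^{NT\lambda^+(\dim X - s)},
\]
which tends to $0$ as $N\to\infty$ whenever $T\lambda^+(\dim X - s) < -\log(1-\tfrac12\mu(\sigma_r U)) \asymp \mu(\sigma_r U)$. This yields
\[
\codim E(F^+, U) \gg \frac{\mu(\sigma_r U)}{T\lambda^+} \gg \frac{\mu(\sigma_r U)}{\log(1/r) + \log(1/\mu(\sigma_r U))},
\]
which is \equ{mainbound}.

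The main difficulty beyond \cite{K} is controlling the iteration in the non-compact setting. The hypothesis $r < r_0(\partial_1 U^c)$ is essentially sharp for this scheme: it guarantees that every admissible box introduced at any stage is centered within $\partial_1 U^c$ and has sidelength less than $r_0$ at that center, so the local charts and the Sobolev bounds on the smooth test functions used in the exponential mixing step remain valid uniformly along the iteration. Sub-boxes whose centers drift outside $\partial_1 U^c$ automatically have $g_{kT}$-images meeting $U$ and are therefore discarded at the step when they leave, which is precisely what prevents any escape-to-cusp contribution from entering the counting argument.
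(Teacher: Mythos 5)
Your overall strategy — smooth the indicators, apply exponential mixing once per time step $T\asymp\log\frac1r+\log\frac1{\mu(\sigma_rU)}$, then iterate a Bowen-type refinement and read off a lower box-dimension bound — is the same as in the paper's \S\S4--6, except that you work directly in $X$ with the boxes $W(s)x$, whereas the paper first localizes to the unstable horospherical subgroup $H$ via the local product $G\approx\tilde H\times H$ and Wegmann's Product Theorem and then performs the covering inside $H$ (in fact inside a subgroup $P$ with property (EEP), which is needed for the Diophantine application). This makes your route a more ``Kadyrov-style'' argument carried out in $X$. However, as written, two steps in the iteration do not go through.

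First, the refinement step. You set $\lambda^+ := \min\{\lambda : \lambda\text{ eigenvalue of }\ad g_1|_{\g^+}\}$ and refine into uniform sub-boxes of $\g^+$-sidelength $re^{-kT\lambda^+}$, ``chosen so that $g_{kT}$ maps each sub-box back to an admissible box of sidelength $r$.'' Unless $\Ad g_1|_{\g^+}$ is scalar, this is false: an eigendirection with rate $\lambda_i>\lambda^+$ is expanded by $g_{kT}$ to sidelength $re^{kT(\lambda_i-\lambda^+)}\gg r$, so the image leaves the injectivity chart and the ``discard if the center lands in $\sigma_{r/2}U$'' step no longer forces the whole sub-box to land in $U$. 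Replacing $\lambda^+$ by the \emph{maximum} rate $\lambda_{\max}$ does not fix this either: then the $g_{kT}$-image becomes a box of thickness $re^{-kT(\lambda_{\max}-\lambda_i)}\to 0$ in the slow directions, whose smoothed indicator has Sobolev norm blowing up with $k$, breaking the uniformity of your one-step estimate. What actually works, and what the paper uses, are genuinely anisotropic Bowen boxes $g_{-kt}V_r\gamma\,g_{kt}$, whose $g_{kt}$-image is \emph{identically} $V_r\gamma\,g_{kt}$, a fixed $r$-scale set at every stage.

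Second, the mass-to-count step. ``By volume considerations the survivor count is $\ll r^{-\dim X}e^{NT(\dim\g^+)\lambda^+}(1-\frac12\mu(\sigma_rU))^N$'' does not follow from a bound on the $\mu$-mass of the (pointwise) survivor set: a sub-box can survive while containing an arbitrarily small fraction of non-escaping points. One needs to count discarded sub-boxes directly, via the inclusion $\{y:g_Ty\in\sigma_{r/2}U\}\subset\bigcup(\text{discarded boxes})$ (the paper's Lemma 6.3) together with the fact that the boxes are pairwise disjoint, so that $\#(\text{discarded})\cdot\nu(\text{box})\ge\nu(A^P(\cdot))$ (the paper's Corollary 6.4). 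This disjointness, and the clean iteration identity $(\nu(V_r)/\nu(g_{-t}V_rg_t))^k=\nu(V_r)/\nu(g_{-kt}V_rg_{kt})$, is precisely what the tessellation machinery of $P$ from \cite{KM1} provides and what fails for a generic cover by boxes or balls in $X$; the paper explicitly flags this as the gap in the Bowen-ball argument of \cite{K} (footnote to equation \equ{compwithkadyvrov} in \S6). Your admissible boxes $W(s)x$ do not tessellate, so this step needs to be rebuilt, most naturally by passing to $H$ first.
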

We note that in the above inequality, as well as in similar statements below, the implicit constant in ${\,\gg}$ is independent of $U$ and $r$ and is only dependent on $X$ and $F$. 
Also note that the right hand side of  \equ{mainbound}  depends on  $r$ while the left hand side   does not. Since the inequality holds for all sufficiently small values of $r$, in  applications one needs to choose an optimal $r$ to strengthen the result. In particular,
it is not hard to see, by taking $U$ to be an open ball {of radius $r$} centered at $z$ and assuming that $X$ is compact,  that {Kadyrov's result}  \equ{shiralibound} is a special case of  \equ{mainbound}.}
Moreover
one has the following generalization:



%
\begin{cor}
\label{Cor2} Let 
$F^+$ 
be as in Theorem \ref{Main Theorem}. Assume that $X$ is compact.
Then
{there exists $r' > 0$}
such that
for any closed
subset $S$  of $X$ and any 
{$0<r<{r'}
$} 
one has 
\[{
\codim E(F^+,\partial_{r}S)  {\,\gg} \ }\frac{{\mu ({\partial _{r/2}}S)}}{{\log (1/r)}}\,.\]
{Consequently},
if $S\subset X$ is a $k$-dimensional {compact} embedded submanifold, then
{for some $C = C(S,F)$ and}
any {$0<r<{r'}
$}
  one has
\eq{forS} 
{{\codim E(F^+,\partial_{r}S)  {\,\ge}\ } {C}\frac{{r^{\dim X - k }}}{{\log (1/r)}}.}
\end{cor}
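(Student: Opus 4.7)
The plan is to apply Theorem~\ref{Main Theorem} to the open set $U = \partial_r S$ with the theorem's radius parameter set to $r/2$. Since $X$ is compact, $U^c$ is automatically compact and the injectivity radius has a positive infimum on $X$, so the hypothesis $r/2 < \min(r_0(\partial_1 U^c), r')$ of Theorem~\ref{Main Theorem} is satisfied whenever $r$ is smaller than a constant depending only on $X$ and $F$; this determines the $r'$ of the corollary. The theorem then yields
\[
\codim E(F^+, \partial_r S) \gg \frac{\mu\bigl(\sigma_{r/2}(\partial_r S)\bigr)}{\log(2/r) + \log\bigl(1/\mu(\sigma_{r/2}(\partial_r S))\bigr)},
\]
and the task reduces to replacing $\sigma_{r/2}(\partial_r S)$ on the right-hand side by the more concrete $\partial_{r/2} S$.

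The key geometric observation is the inclusion $\partial_{r/2} S \subseteq \sigma_{r/2}(\partial_r S)$. Indeed, for $x \in \partial_{r/2} S$ one has $\dist(x, S) < r/2$, while for every $y \in (\partial_r S)^c$ the inequality $\dist(y, S) \ge r$ combined with the triangle inequality gives $\dist(x, y) \ge r - \dist(x, S) > r/2$; taking the infimum over $y$ shows $x \in \sigma_{r/2}(\partial_r S)$. Since the function $t \mapsto t / (\log(2/r) + \log(1/t))$ is increasing in $t$ on $(0,1)$, this inclusion allows me to replace $\mu(\sigma_{r/2}(\partial_r S))$ by $\mu(\partial_{r/2} S)$ in both numerator and denominator of the bound above. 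To absorb the resulting denominator into $\log(1/r)$, I would note that if $S$ is non-empty then $\partial_{r/2} S$ contains a ball $B(x_0, r/2)$ with $x_0 \in S$, and compactness of $X$ gives $\mu(B(x_0, r/2)) \gg r^{\dim X}$ uniformly, so $\log(1/\mu(\partial_{r/2} S)) \ll \log(1/r)$; the first assertion follows.

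For the second assertion, it remains to establish the bound $\mu(\partial_{r/2} S) \gg r^{\dim X - k}$ (with implicit constant depending on $S$) when $S$ is a compact embedded $k$-dimensional submanifold. A standard packing argument does the job: for $r$ below a threshold depending on $S$, one can find $\gg r^{-k}$ pairwise $r$-separated points $x_1, \ldots, x_N \in S$; the balls $B(x_i, r/2)$ are then pairwise disjoint, each contained in $\partial_{r/2} S$, and each of $\mu$-measure $\gg r^{\dim X}$, yielding the claim upon summing. The one slightly delicate point in the whole argument is the matching of scales: the parameter $r/2$ inside Theorem~\ref{Main Theorem} must be chosen carefully relative to the scale $r$ defining $\partial_r S$ so that the inclusion above is strict and the monotonicity step applies; once this is set up, everything else reduces to standard volume estimates on a compact Riemannian manifold.
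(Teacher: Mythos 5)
Your proposal is correct and follows essentially the same route as the paper's proof: apply Theorem~\ref{Main Theorem} to $U=\partial_r S$ with $r/2$ in place of the theorem's radius, use the inclusion $\partial_{r/2}S\subseteq\sigma_{r/2}(\partial_r S)$ together with monotonicity of $t\mapsto t/(\log(2/r)+\log(1/t))$, absorb the $\log(1/\mu)$ term using the ball-volume lower bound $\mu(\partial_{r/2}S)\gg r^{\dim X}$, and for the submanifold case invoke $\mu(\partial_{r/2}S)\gg r^{\dim X-k}$. The only cosmetic difference is that the paper simply asserts the latter volume estimate as ``easy to see'' while you supply the standard packing justification, and you deduce the second assertion from the first rather than re-expanding the original bound; both are fine.
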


{The case $k = 0$ and $S = \{z\}$ of \equ{forS}   coincides with   \equ{shiralibound}: it is easy to show, by looking at the proof, that $C({\{z\}},F)$ is independent on $z\in X$.}
\smallskip

Similarly to the previous  papers \cite{BK, K} on the subject, the
main theorem is deduced from
 {a result} that estimates
$$\dim E(F^+,\sigma_{r}U) \cap Hx,$$ where $x\in X$ and $H$ is the {\sl unstable horospherical subgroup} with respect to $F^+$, {defined as
\eq{uhs}{H := \{ g \in G:{\dist}({g_t}g{g_{ - t}},e) \to 0\,\,\,as\,\,\,t \to  - \infty \}.}}
More generally, in the following theorem we estimate $$\dim
E(F^+,\sigma_{r}U) \cap Px$$ for {$x\in X$ and} some proper
subgroups $P$ of $H$, namely those which have {\sl Effective
Equidistribution Property} (EEP,  see \S\ref{h} for {the
definition}) with respect to {the flow $(X,F^+)$}.  Note that for $P = H$ this
property follows from  the exponential mixing of the action, as
shown in \cite{KM1}.

\begin{thm}
\label{Main Theorem1} Let  $G$, $\Gamma $   and $X$ be as in  Theorem \ref{Main Theorem},  let ${F^+}$ be a one-parameter {$\Ad$-}diagonalizable
{sub}semigroup of $G$, and let $P$ be a subgroup of $H$ which  has property {\rm (EEP)} with respect to {the flow $(X,F^+)$}.
Then there exists {$r'' > 0$
}  such that 
{for any
  $x \in X$, {any} $U\subset X$ such that $U^c$ is compact  and
any
{$\,0 < r < \min\big(r_0(\partial_{1/2}U^c)
,r''\big)
$} 
one has   
\eq{lhs} {
\codim \{ g \in P: gx \in E({F^ + },U)\}
{\,\gg}  \frac{{\mu ({\sigma _{r}}U)}}{{\log \frac{1}{r} + \log \frac{1}{{\mu ({\sigma _{r}}U)}}}}}\,.
}
\end{thm}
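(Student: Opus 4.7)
The strategy, following \cite{BK, K}, is a Cantor-style construction on $P$ driven by (EEP). Since $\sigma_r U\subseteq U$, any orbit avoiding $U$ also avoids $\sigma_r U$; hence for any time step $T>0$ to be chosen,
\[
\{g \in P : gx \in E(F^+,U)\}\subseteq \{g \in P : g_{jT}\, gx\notin \sigma_r U\ \forall\,j\in\mathbb{N}\}.
\]
Working locally, fix a ball $B_0\subset P$ of bounded radius (the full statement then follows by summing over a countable cover of $P$), and estimate the codimension of the intersection of the right-hand set with $B_0$.

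Since $P\subset H$, conjugation by $g_T$ expands $P$-balls by a factor of at least $e^{\lambda T}$ for some $\lambda>0$ depending on the bottom Lyapunov exponent of $\Ad(g_1)|_{\Lie(P)}$. At stage $j$, partition $B_0$ into a grid of $P$-balls of radius $s_j := e^{-\lambda j T}$, and let $\mathcal C_j$ consist of those grid-balls whose interiors meet $\{g\in B_0:g_{kT}gx\notin \sigma_r U\ \forall\,0\le k\le j\}$. Each $B\in\mathcal C_j$ is a union of $\sim e^{\lambda T\dim P}$ sub-balls of radius $s_{j+1}$; the goal is to show that only a fraction $\leq 1-c\,\mu(\sigma_r U)$ of these can belong to $\mathcal C_{j+1}$.

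To apply (EEP) at the scale of an individual $B\in\mathcal C_j$, perform the substitution $h' := g_{jT}hg_{-jT}$, which carries $B$ onto a unit-radius ball $B'\subset P$ and converts the condition $g_{(j+1)T}hx\notin \sigma_r U$ into $g_T h' y\notin \sigma_r U$ with $y := g_{jT}x$. Taking $\phi$ a smooth bump approximation of $\mathbf{1}_{B'}$ and $f$ a mollification of $\mathbf{1}_{\sigma_r U}$ with Sobolev norm $O(r^{-\kappa})$ (constructible thanks to the injectivity-radius hypothesis $r<r_0(\partial_{1/2}U^c)$, which prevents the $r$-scale mollification from wrapping around $X$ near $U^c$), property (EEP) yields
\[
\Bigl|\int_P \phi(h')\,f(g_T h' y)\,dh' - \|\phi\|_1\,\mu(\sigma_r U)\Bigr| \leq C\,e^{-\alpha T}\,r^{-\kappa}.
\]
Choosing $T\asymp \log(1/r)+\log(1/\mu(\sigma_r U))$ makes the right-hand side at most $\tfrac12\|\phi\|_1\mu(\sigma_r U)$, so the measure of $\{h\in B: g_{(j+1)T}hx\in\sigma_r U\}$ is at least $c\,\mu(\sigma_r U)\,|B|$. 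A short geometric argument, exploiting that the derivative of $h\mapsto g_{(j+1)T}hx$ is controlled on sub-balls of radius $s_{j+1}$, upgrades this measure estimate to the asserted bound on the fraction of surviving sub-balls.

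Iterating gives $|\mathcal C_j|\leq e^{\lambda T j \dim P}(1-c\mu(\sigma_r U))^j$, whence
\[
\dim(A\cap B_0)\leq \limsup_{j\to\infty}\frac{\log |\mathcal C_j|}{\log(1/s_j)} = \dim P - \frac{-\log(1-c\mu(\sigma_r U))}{\lambda T} \leq \dim P - \frac{c\,\mu(\sigma_r U)}{\lambda T},
\]
and substituting the threshold $T\asymp \log(1/r)+\log(1/\mu(\sigma_r U))$ produces \equ{lhs}. The central technical difficulty lies in the passage from the EEP mass estimate to the counting bound on surviving sub-balls: the good mass inside $B$ must occupy a definite fraction of the \emph{sub-balls} of $B$, not merely a fraction of its Haar measure. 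Both the injectivity-radius hypothesis and the replacement of $U$ by the inner core $\sigma_r U$ exist precisely to create an $r$-wide buffer that absorbs the $s_{j+1}$-scale fluctuations in the map $h\mapsto g_{(j+1)T}hx$ and so guarantees this geometric upgrade.
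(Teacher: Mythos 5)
Your high-level strategy — a Cantor-style iteration driven by applying (EEP) at each step of the $g_T$-dynamics on $P$, followed by optimizing $T \asymp \log(1/r) + \log(1/\mu(\sigma_r U))$ — is the same as the paper's. The key gap is in the step you yourself flag as the ``central technical difficulty'': upgrading the (EEP) \emph{measure} estimate inside a cell $B$ to a \emph{count} of surviving sub-cells. You propose to partition $B_0$ ``into a grid of $P$-balls'' and iterate, but metric balls do not partition $P$; any covering by balls of radius $s_{j+1}$ has bounded overlap, and when you iterate $k$ times these overlap constants compound into a $C^k$ prefactor that destroys the effective codimension bound when you let $k\to\infty$ (it would only give $\dim \le \dim P$). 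This is precisely the issue the paper attributes to Kadyrov's earlier argument (see the footnote before \equ{compwithkadyrov}).

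The paper's resolution is the tessellation technique of \cite{KM1}: one replaces balls by translates of a fixed \emph{box} $V_r = \exp(\tfrac{r}{4\sqrt{L}}I_P)$, which forms an exact tiling of the nilpotent group $P$ by a lattice $\Lambda_r$. The Bowen box $g_{-t}V_r g_t$ then has measure exactly $e^{-\alpha t}\nu(V_r)$ for a fixed $\alpha$, and the ratio of measures telescopes \emph{exactly} across iterations — this is the identity \equ{compwithkadyrov}, $\big(\nu(V_r)/\nu(g_{-t}V_rg_t)\big)^k = \nu(V_r)/\nu(g_{-kt}V_rg_{tk})$ — so the only prefactor coming from passing to balls at the very end is a single multiplicative constant $K_0$, not $K_0^k$. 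The boundary effects (Bowen boxes straddling the wall of $V_r$) are controlled once, via Lemma \ref{covering}, by a term $K_3 e^{-\lambda_0 t}/\nu(V_r)$, which is then absorbed into the $(1-K_1\mu(\sigma_r U)+\cdots)^k$ bracket. Without the tessellation, your ``short geometric argument'' for the upgrade is not supplied, and the authors' own comment suggests it is not easily suppliable. Your other ingredients — the local conjugation $h' = g_{jT}hg_{-jT}$, the control of the injectivity radius because surviving base points stay in the compact $\partial_r U^c$, the role of the $\sigma_r U$ buffer in Lemma \ref{covering of A^P}, and the smoothing of indicators at scale $r$ using Lemmas \ref{estimate}--\ref{ball estimate} — all match the paper; but the core counting mechanism, Theorem \ref{main cov}, requires the tessellation and is missing from your write-up.
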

{The general statement of Theorem \ref{Main Theorem1}} makes it possible to derive a corollary involving simultaneous  \da\ with weights.
Take $$\vr = ({i_k}: k = 1,\dots, m)\text{ and }\vs = ({j_\ell}:  \ell = 1,\dots,  n)$$
{with \eq{simplex}{{i_k},{j_\ell} > 0\,\,\,\,and\,\,\,\,\sum\limits_{k = 1}^m {{i_k} = 1 = \sum\limits_{\ell = 1}^n {{j_\ell}} },}}
and define  the {\sl $\vr$-quasinorm} of $\x\in\R^m$ and the {\sl $\vs$-quasinorm} of $\vy\in\R^n$ by 
$$
\left\| \x \right\|_\vr:= {\max _{1 \le k \le
m}}{\left| {{x_k}} \right|^{1/i_k}}\text{ and }\left\| \vy \right\|_\vs:= {\max _{1 \le \ell \le
n}}{\left| {{y_\ell}} \right|^{1/j_\ell}}.$$
A system of linear forms given by $ A
\in {M_{m,n}}(\mathbb{R})$ is said to be \emph{$(\vr,\vs)$-badly approximable} if
\[
{\inf}_ {\p \in {\mathbb{Z}^m}, \ {\q
\in {\mathbb{Z}^n}\ssm \{ 0\} } } \left\| {A\q + \p} \right\|_\vr
 \left\| \q \right\|_\vs
  > 0
\]
This generalizes the notion of {(unweighted)} \ba\ systems of linear forms,  {which correspond to the choice of} equal weights
\eq{equalweights}{\vr = \vm := ( 1/m,\dots,1/m),\quad \vs = \vn := ( 1/n,\dots,1/n).}
Now for any $c>0 $ set \eq{Weighted badly}{\Bad_{\vr,\vs}(c) 
:=  \{
A\in M_{m,n} : \mathop {\inf }_{ {\p \in {\mathbb{Z}^m}, \ {\q
\in {\mathbb{Z}^n}\ssm \{ 0\} } }}\left\| {A\q + \p}
\right\|_\vr
 \left\| \q \right\|_\vs
 \ge c \}. }
It is known, {see \cite[Theorem 2]{PV} and \cite[Corollary 4.5]{KW}}, that for any $\vr,\vs$ {as in \equ{simplex}} the set of
$(\vr,\vs)$-badly approximable systems of  linear forms, which is
the union of the sets $\Bad_{\vr,\vs}(c) $ over $c > 0$,  has
Hausdorff dimension $mn$. One can ask for {an estimate for} the Hausdorff dimension of
$\Bad_{\vr,\vs}(c)$ for fixed $\vr$, $\vs$ and $c$. Our 
goal in
\S \ref{weighted badly} is to {deduce} the following theorem {from Theorem \ref{Main Theorem1}:}
\begin{thm}\label{main theorem 3}
{There exists $c_0 > 0$ such that} for any
 $\vr,\vs$ as in \equ{simplex}
 and any $0<c<c_0$
one has 
\[
{
{\codim  \Bad_{\vr,\vs}(c) }{\,\gg}\\}  \frac{c}{{\log  \frac{1}{c}}}, \]
{where the implicit constant in ${\,\gg}$ is independent of $c$ {but depends on $\vr,\vs$}.}
\end{thm}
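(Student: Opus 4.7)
I would deduce the bound from Theorem \ref{Main Theorem1} via a weighted Dani correspondence. Set $G=\SL_{m+n}(\R)$, $\Gamma=\SL_{m+n}(\Z)$, $X=G/\Gamma$, and let $F^+=(g_t)_{t\ge0}$ be the weighted diagonal subsemigroup
$$g_t:=\diag(e^{i_1t},\dots,e^{i_mt},e^{-j_1t},\dots,e^{-j_nt}),$$
which by \equ{simplex} lies in $G$ and is $\Ad$-diagonalizable with exponentially mixing action on $X$. The abelian subgroup $P:=\{u_A : A\in M_{m,n}\}$ with $u_A=\bigl(\begin{smallmatrix}I_m&A\\0&I_n\end{smallmatrix}\bigr)$ is contained in the unstable horospherical subgroup of $F^+$, is expanded by $\Ad g_t$ (eigenvalues $e^{(i_k+j_\ell)t}>1$ for $t>0$), and satisfies property (EEP) by an adaptation of \cite{KM1}; the map $A\mapsto u_A\Gamma$ is a local diffeomorphism $M_{m,n}\to Pe\Gamma$.

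The crux is to measure the cusp with a quasinorm adapted to the weights. Define $\|(x,y)\|_{\vr,\vs}:=\max(\|x\|_\vr,\|y\|_\vs)$ on $\R^{m+n}$ and
$$U_{\vr,\vs}(\vre):=\bigl\{g\Gamma: \inf_{v\in g\Z^{m+n}\ssm\{0\}}\|v\|_{\vr,\vs}<\vre\bigr\}.$$
The identity $|e^{i_kt}a|^{1/i_k}=e^t|a|^{1/i_k}$ (and its counterpart for the $\vs$-coordinates) gives
$$\|g_tu_A(\p,\q)\|_{\vr,\vs}=\max\bigl(e^t\|A\q+\p\|_\vr,\,e^{-t}\|\q\|_\vs\bigr)\ \ge\ \sqrt{\|A\q+\p\|_\vr\,\|\q\|_\vs}$$
for $\q\ne 0$ by AM--GM, while for $\q=0,\p\ne 0$ one has $\|g_tu_A(\p,0)\|_{\vr,\vs}=e^t\|\p\|_\vr\ge 1$ when $t\ge 0$. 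Hence $A\in\Bad_{\vr,\vs}(c)$ forces $u_A\Gamma\in E(F^+,U_{\vr,\vs}(\sqrt{c}))$, and by monotonicity of codimension it suffices to bound from below the codimension of $\{A:u_A\Gamma\in E(F^+,U_{\vr,\vs}(\sqrt{c}))\}$ inside $M_{m,n}$.

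Applying Theorem \ref{Main Theorem1} with $U=U_{\vr,\vs}(\sqrt{c})$, the subgroup $P$ above, and $x=e\Gamma$, I would verify: (i)~$U_{\vr,\vs}(\vre)^c$ is compact, via Mahler's criterion together with the observation that $\|v\|_{\vr,\vs}\to 0$ whenever $\|v\|_\infty\to 0$; (ii)~the volume estimate $\mu(U_{\vr,\vs}(\vre))\asymp|B_{\vr,\vs}(\vre)|$ for small $\vre$, via Siegel's integral formula applied to the anisotropic box $B_{\vr,\vs}(\vre):=\{|x_k|<\vre^{i_k},\,|y_\ell|<\vre^{j_\ell}\}$, which has Lebesgue measure $2^{m+n}\vre^{\sum_k i_k+\sum_\ell j_\ell}=2^{m+n}\vre^2$; (iii)~a modulus-of-continuity estimate yielding $\sigma_rU_{\vr,\vs}(\vre)\supseteq U_{\vr,\vs}(\vre/2)$ whenever $r\le c_0(\vr,\vs)\vre$. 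Choosing $r\asymp\vre=\sqrt{c}$, both $\log(1/r)$ and $\log(1/\mu(\sigma_rU_{\vr,\vs}(\sqrt{c})))$ are $\asymp\log(1/c)$, so \equ{lhs} gives
$$\codim\Bad_{\vr,\vs}(c)\ \gg\ \frac{\vre^2}{\log(1/\vre)}\ \asymp\ \frac{c}{\log(1/c)}.$$

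The principal technical obstacle is item (iii): the weighted quasinorm is not Lipschitz with respect to the Riemannian distance on $X$, because short vectors have $\|v\|_\vr$ depending on coordinates through fractional exponents $1/i_k>1$. What is true and suffices is a \emph{multiplicative} continuity bound $\|hv\|_{\vr,\vs}\in\bigl[(1-O(r))\|v\|_{\vr,\vs},\,(1+O(r))\|v\|_{\vr,\vs}\bigr]$ for $h\in B(r)$ with $r$ small enough depending on $\vr,\vs$, which one checks directly from the mean value theorem applied to $a\mapsto a^{1/i_k}$ on the range of coordinate sizes relevant near the cusp. Items (i) and (ii) are essentially classical, with the anisotropy of $B_{\vr,\vs}(\vre)$ causing no real difficulty once coincidences (multiple short lattice vectors in the box) are ruled out for small $\vre$ via a second-moment computation.
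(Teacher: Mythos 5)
Your overall route is the same as the paper's: reduce via a weighted Dani correspondence to the dynamics of $g_t^{\vr,\vs}$ on $X=\SL_{m+n}(\R)/\SL_{m+n}(\Z)$, take $P$ as in \equ{subgroup mix}, invoke (EEP) for $P$, and apply Theorem \ref{Main Theorem1} to a cusp neighborhood $U_{\vr,\vs}(\vre)$. The one genuine departure is the choice of quasinorm: you use $\|(\p,\q)\|_{\vr,\vs}=\max(\|\p\|_\vr,\|\q\|_\vs)$, whereas the paper uses $\max(\|\p\|_\vr^{1/m},\|\q\|_\vs^{1/n})$. Your normalization gives $\vre=\sqrt{c}$ and $\mu(U_{\vr,\vs}(\vre))\asymp\vre^2$ via Siegel, while the paper's gives $\vre=c^{1/d}$ and $\mu\asymp\vre^d$; both deliver $\mu\asymp c$, so this is a cosmetic reparametrization and your AM--GM derivation of the forward Dani correspondence is correct.

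There are, however, two real problems. First, you never verify the hypothesis $r<r_0\bigl(\partial_{1/2}U^c\bigr)$ of Theorem \ref{Main Theorem1}, and your choice $r\asymp\vre$ violates it: by Lemma \ref{injectivity1} and the comparison of $\delta_{\vr,\vs}$ with the Euclidean $\delta$ (the analogue of \equ{boundfordelta} for your quasinorm gives $\delta(x)\gtrsim\delta_{\vr,\vs}(x)^{\beta}$ with $\beta:=\max_k i_k\vee\max_\ell j_\ell$), the injectivity radius on $\partial_{1/2}U_{\vr,\vs}(\vre)^c$ is only guaranteed to be $\gtrsim\vre^{\beta d}$, and $\beta d>1$ in general (e.g.\ $n=1$ gives $\beta=1$, $\beta d=d\ge 3$). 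The paper deals with this by taking $r\asymp\vre^{d\max(m,n)}$; you should likewise take $r$ a higher fixed power of $\vre$. Since $\log(1/r)\asymp\log(1/\vre)\asymp\log(1/c)$ regardless, this does not change the final estimate, but the constraint must be checked — it is a stated hypothesis of the theorem you invoke.

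Second, the ``multiplicative continuity bound'' $\|hv\|_{\vr,\vs}\in[(1-O(r))\|v\|_{\vr,\vs},(1+O(r))\|v\|_{\vr,\vs}]$ for $h\in B(r)$, with $O(r)$ uniform over $v$ and $r$ depending only on $\vr,\vs$, is false. The additive perturbation of the $k$-th coordinate of $v$ is of size $O(r)\|v\|_\infty\asymp O(r)\vre^\alpha$ (where $\alpha=\min_k i_k\wedge\min_\ell j_\ell$), which can dominate $|v_k|\asymp\vre^{i_k}$ by a factor $\vre^{\alpha-i_k}\to\infty$ as $\vre\to 0$ when $i_k>\alpha$; raising to the power $1/i_k$ then blows up the ratio $\|hv\|_{\vr,\vs}/\|v\|_{\vr,\vs}$. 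Fortunately the weaker statement you actually use — $\sigma_rU_{\vr,\vs}(\vre)\supseteq U_{\vr,\vs}(\vre/2)$ for $r\lesssim\vre$ — is correct, because the direct estimate only requires $O(r)\vre^\alpha\lesssim\vre^{i_k}$, i.e.\ $r\lesssim\vre^{\beta-\alpha}$, and $\beta-\alpha<1$; this is precisely what the paper's Lemma \ref{rcore} establishes (with an extra power of $\vre$ from its different quasinorm). So the lemma is fine but the justification you sketch is not. Finally, ``(EEP) by an adaptation of \cite{KM1}'' understates the work: $P$ is a \emph{proper} subgroup of the full expanding horospherical subgroup when $\vr\ne\vm$ or $\vs\ne\vn$, so Theorem \ref{thmheep} does not apply, and the paper devotes Theorem \ref{1.5} (via quantitative nondivergence, Proposition \ref{qn}) to proving it.
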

{This is a weighted generalization of \cite[Theorem 1.3]{BK}.
{Note that in the paper \cite{Si},   mentioned in the footnote before \equ{slmnbound}, it is  shown that $\codim  \Bad_{\vm,\vn}(c) $ is  asymptotic to a constant times $c$ as $c\to 0$. 
However
the methods of \cite{Si} do not seem to extend to the weighted case.} 

\smallskip

{The structure of the paper is as follows. In the next section we define exponential mixing and property (EEP), and, following  \cite{KM1, KM4}, show that the exponential mixing of the $g_t$-action on $X$ implies (EEP) for the expanding horospherical subgroup relative to $g_1$. In \S \ref{main Section}
we deduce Theorem \ref{Main Theorem} and Corollary \ref{Cor2} from Theorem \ref{Main Theorem1}. The next three sections   are devoted to proving Theorem \ref{Main Theorem1}. In \S \ref{weighted badly} we   prove Theorem \ref{main theorem 3} 
by reducing the problem to
dynamics on the space $\ggm$ with $G$ and $\Gamma$ as in \equ{slmn}
and \eq{generalgt}{g_t = {g_t^{\vr,\vs} :=} \diag({e^{{i_1}t}},\dots,{e^{{i_m}t}},{e^{
- {j_1}t}},\dots,{e^{ - {j_n}t}}).} 
Theorem \ref{Main Theorem1} is then applied to the subgroup
\eq{subgroup mix}
{P = \left\{ \left( {\begin{array}{*{20}{c}}
{{I_m}}&A\\
0&{{I_n}}
\end{array}} \right) : A\in  {M_{m,n}}(\mathbb{R})\right\}
}
of $G$, which, following \cite{KM4}, is shown in \S\ref{eepp} to  satisfy property (EEP) relative to the \linebreak
$g_t^{\vr,\vs}$-action. We conclude the paper with a few  remarks and open questions.
\ignore{ We use the similar method to the one used in
\cite{BK}. In particular, similar to the correspondence due to
S.\,G.\ Dani \cite{dani}, we relate the set $ Ba{d_{\vr,\vs}}(c)$ to
the set of orbits in $X = \SL_{\m +
n}(\mathbb{R})/\SL_{m +
n}(\mathbb{Z})$ which never enter a certain open subset. For
${x}  \in X $ we define
\[{\varepsilon _{\vr,\vs}}({x} )\mathop  = \limits^{def} \mathop {\inf }\limits_{x \in {x} \ssm \{ 0\} } \mathop {\max }\limits_{\mathop {\scriptstyle1 \le i \le m\hfill\atop
\scriptstyle1 \le j \le n\hfill}\limits_{} }
(|{x_i}{|^{\frac{1}{{{i_k}}}}},|{x_{m +
j}}{|^{\frac{1}{{{j_\ell}}}}})\] We consider
\[{g_t} = diag({e^{{i_1}t}},...,{e^{{i_m}t}},{e^{ - {j_1}t}},\dots,{e^{ - {j_n}t}})\]
and for $\varepsilon >0 $ define
\[{U_{\vr,\vs}(\varepsilon) } = \{ {x}  \in X :{\delta _{\vr,\vs}}({x} ) \le \varepsilon \} \]
note that $X \ssm {U_ \in }$ $X\ssm U(\vre)$ is compact
for every $\vre > 0$. Then it can be easily shown  (see Lemma
\ref{Cusp}) that for any $0<c<1$, \eq{Badly} {Ba{d_{\vr,\vs}}(c) = \{ A
\in {M_{m,n}}:\{ {g_t}{u_A}{Z^{m+n}}:t \ge 0\}  \cap
{U_{\vr,\vs}(\varepsilon) } = \varnothing \} }
where $\varepsilon  = {c^{\frac{1}{2}}}$. \\

 In next section we will show that in order to get an upper bound for the Hausdorff dimension of $E({F^ + },U)$ it suffices to get an upper bound for the Hausdorff dimension of the set:
\eq{set 2} {{E^H}(r/2,{(U)^c},x): = \{ h \in {B^H}(r/2):{g_t}hx
\notin U{\mkern 1mu} ,\,\,\,{\mkern 1mu} {\mkern 1mu} \forall t \ge
0\} } for any $x \in X$ and $r < {r_0}/2$.

We then use the exponential mixing property of
 $g_t$-action to estimate the number of small balls needed to cover the set \equ{set 2}
.This enables us to get an upper bound for its Hausdorff
 dimension of \equ{set 2} and conclude Theorem \ref{Main
 Theorem}.By using the upper bound for the dimension of
 \equ{set 2} obtained in Section $2$, in Section $3$ we will be able to get an upper bound for
 the dimension of \equ{Badly} and conclude Theorem \ref{main theorem 3}.}

 \medskip
{\bf Acknowledgements.} The authors are grateful to the hospitality
of the 
MSRI (Berkeley, CA)
where some parts of this work were accomplished. We also thank
Shirali Kadyrov for useful discussions and suggestions{, and a reviewer for helpful comments}.

{
\section{Exponential mixing implies (EEP) for $H$ }\label{h}


We start with the definition of Sobolev spaces on {Lie groups and their homogeneous spaces.
Let $G$ be a 
Lie
group  
and  $\Gamma$ 
a discrete subgroup of $G$. Denote by $X$ the \hs\ $\ggm$ and by $N$ the dimension of $G$.  In what follows, $\|\cdot\|_{{p}}$ will stand for the $L^p$ norm, and $(\cdot,\cdot)$ for the inner product in $L^2(X,\mu)$, where $\mu$ is a (fixed) $G$-invariant measure on $X$. If $\Gamma$ is a lattice in $G$, we will always take $\mu$ to be the probability measure. {Note though that} 
much of the set-up below applies to the case $\Gamma = \{e\}$ and $X = G$.}

{Fix a basis $\{Y_1,\dots,Y_n\}$ 
for the Lie algebra
$\frak g$
of
$G$, and, given a smooth
 function $h \in C^\infty(X)$ and $\ell\in{\Z_+}$, define the ``{\sl $L^{{p}}$, order $\ell$" Sobolev norm} $\|h \|_{\ell{,p}}$ of $h $ by 
 $$
 \|h \|_{\ell{,p}} \df \sum_{|\alpha| \le \ell}\|D^\alpha h \|_{{p}},
 $$
 where 
 $\alpha = (\alpha_1,\dots,\alpha_n)$ is a multiindex, $|\alpha| = \sum_{i=1}^n\alpha_i$, and $D^\alpha$ is a differential operator of order $|\alpha|$ which is a monomial in  $Y_1,\dots, Y_n$, namely $D^\alpha = Y_1^{\alpha_1}\cdots Y_n^{\alpha_n}$.
This definition depends on the basis,
however, a change of basis
would only  distort
$ \|h \|_{\ell{,p}}$
by a bounded factor. We also let 
$$C^\infty_2(X) = \{h \in C^\infty(X): \|h \|_{\ell{,2}} < \infty\text{ for any }\ell = \Z_+\}.$$
Clearly smooth compactly supported functions belong to $C^\infty_2(X)$. We will also use the operators $D^\alpha$ to define $C^\ell$ norms of smooth 
functions $f$ on $X$: 
$$
\|f\|_{C^\ell}  := \sup_{x\in X, \ |\alpha|\le \ell}|D^\alpha f(x)|.
$$}


\ignore{Also the components of $\nabla^2(u)$ in local coordinates are
given by:
\[{({\nabla ^2}u)_{ij}} = {\partial _{ij}}u - \Gamma _{ij}^l{\partial _\ell}u\]
Where $\Gamma _{ij}^k$ is the Christoffel symbols defined by:
\[\Gamma _{ij}^l = \frac{1}{2}{g^{lm}}(\frac{{\partial {g_{mi}}}}{{\partial {x^j}}} + \frac{{\partial {g_{mj}}}}{{\partial {x^i}}} - \frac{{\partial {g_{ij}}}}{{\partial {x^m}}})\]
Where $(g^{ij})$ is the inverse of the matrix $(g_{ij})$. We can
similarly observe that for each $k$,  each component of $\nabla^k(u)
$ in local coordinates is equal to the usual partial derivative plus
some terms that contain Christoffel symbols and derivatives of $u$ .
So if $u$ has compact support then since the maps $g_{ij}$ are all
smooth, in }
\ignore{Then for any relatively compact coordinate  chart $(W,\phi)$ 
and for any $\ell\in\N$ there exists a constant
$M_{\ell,W} \ge 1$ such that for any $x \in W$ and $k \le \ell $: \eq{christoffel}{\left|
{({\nabla ^k}u)(x)} \right| \leqslant  {M_{\ell,W}} \mathop
{{\rm sup}{\mkern 1mu} }\limits_{|\alpha| \le \ell,
} \left|(D^\alpha u)(x) 
\right|.\,\,\,\,}
 For any 
$\ell\in \Z_+$ 
we define:
\[C_\ell(X) = \left\{ u \in {C^\infty }(X): \forall\, k= 0,1,\dots,\ell,\,\,\,\int_X |\nabla ^k u| ^2\,d\mu < \infty \right\} .\]
\begin{defn} \label{Sobolev}
The Sobolev space $W_\ell(X)$ is the completion of $C_\ell(X)$ with
respect to the norm 
\[{\left\| u \right\|_{\ell{,2}}} =  \sum\limits_{k= 0}^\ell \left(\int_X |\nabla ^ku| ^2\,d\mu\right)^{1/2} .\]
\end{defn}}

\begin{defn}

\label{exp decay} {Let $F^+ = \{g_t : t\ge 0\}$ be a one-parameter subsemigroup of $G$, and let $X = \ggm$ where $\Gamma$ is a lattice in $G$.} We say that a flow $(X,F^+)$ is {\sl exponentially
mixing}  if there exist {$\gamma  > 0$ and $\ell\in\Z_+$}
such that for any   $\varphi, \psi \in  C^\infty_2(X) $
 and for any $t\ge 0$ one has
{\eq{em}{\left| {({g_t}\varphi ,\psi )- \int_X\varphi\, d\mu \int_X\psi\, d\mu}\right|   \ll  {e^{ - \gamma t}}{\left\| \varphi  \right\|_{\ell{,2}}} {\left\| \psi  \right\|_{\ell{,2}}}.}}
\end{defn}



{As is the case in many applications, we will use the exponential mixing to study expanding translates of pieces of certain subgroups of $G$. If $P\subset G$ is a subgroup with a fixed Haar measure $\nu$, $\psi$ a function on $X$, $f$ a function on $P$, $x\in X$ and $t\ge 0$, let us define
$$I_{f,\psi}({g_t},x):= \int_P
f({{h}})\psi(g_t{{h}}x)\,d\nu({{h}})\,.$$}

\begin{defn}\label{subgroup}
Say that a subgroup $P$ of $G$ has {\sl Effective Equidistribution
Property} {\rm (EEP)} with respect to the flow $(X,F^+)$
if 
 $P$ is normalized by $F^+$, and there exists 
$ \lambda > 0$ and {$\ell \in {\N}$} such that 
for any 
{ $x \in X$ and $t > 0$ with \eq{conditionont}{t\  {\ggplus \log\frac{1}{r_0({x}) },} }
 any $f\in \cic(P)$ with $\supp f \subset B^P(1)$ and 
any $\psi\in
C^\infty_2(X)$}
it holds that
\eq{eep}{\left| {{I_{f,\psi }}({g_t},x) - \int_P f\,d\nu {\mkern 1mu} \int_X \psi\,d\mu  {\mkern 1mu} } \right| {\ll}\ {\max (
{\left\| \psi  \right\|_{C^1}, \left\| \psi  \right\|_{\ell{,2}} }
)} \cdot {\left\| f \right\|_{{C^\ell }}} \cdot {e^{ - \lambda t}}{\mkern 1mu}.}
\end{defn}

Here $\nu$ stands for a Haar measure on $P$. {Note that the implicit constants in both \equ{conditionont} and \equ{eep} are independent on $f$, $\psi$, $t$ and $x$.} 
This definition is
quite involved but it is justified by the fact that  in many special cases {\equ{eep} can be derived from exponential mixing, for example when $P = H$, the unstable horospherical
subgroup relative to $F^+$.}
{This was essentially proved in \cite{KM4}, together with another important example of a proper subgroup of $H$ with the same property, namely with $P$ as in \equ{subgroup mix}. We are going to revisit the argument from that paper and  make the constants appearing there explicit.
\begin{remark} \rm Note that it suffices to establish (EEP) for functions $\psi$ with $\int_X \psi\,d\mu = 0$: indeed, if $\psi_0 := \psi - \int_X \psi\,d\mu$, one clearly has $$I_{f,\psi_0 }({g_t},z)  = I_{f,\psi }({g_t},z) - \int_H f\,d\nu  \int_X \psi\,d\mu.$$
\end{remark}}

Let $\mathfrak{g}$ be a Lie algebra of $G$,
$\mathfrak{g}_\mathbb{C}$ its complexification, and for $\lambda
\in \mathbb{C}$, let $E_ \lambda$ be 
the eigenspace of
$\Ad\, g_1$ corresponding to $\lambda$.
Let $\mathfrak{h}$, $\mathfrak{{h^0}}$, $\mathfrak{{h^ - }}$ be the
subalgebras of $\mathfrak{g}$ with complexifications:
\[{\mathfrak{h}_\mathbb{C}} = \spn({E_\lambda }: \left| \lambda  \right| > 1),\ \mathfrak{h}_\mathbb{C}^0 = \spn({E_\lambda }: \left| \lambda  \right| = 1),\ \mathfrak{h}_\mathbb{C}^ -  = \spn({E_\lambda }:\left| \lambda  \right| < 1).\]
Let $H$, ${H^0}$, ${H^ - }$ be the corresponding subgroups of $G$. 
{Note that} $H$ is {precisely} the
unstable horospherical subgroup {with respect to $F^+$} (defined in \equ{uhs}) and $H^-$ is {the}
stable horospherical subgroup defined by:
\[{H^ - } = \{ h \in G:{g_t}h{g_{ - t}} \to e\,\,\,as\,\,t \to  + \infty \}. \]
Since $\Ad\, g_1$ is assumed to be diagonalizable over $\mathbb{C}$, $ \mathfrak{g} $ is the direct sum of
$\mathfrak{h}$, $\mathfrak{{h^0}}$ and $\mathfrak{{h^ - }}$. Therefore $G$ is
{locally (at a neighborhood of identity) a} direct product of the subgroups $H$, ${H^0}$ {and} ${H^ - }$.
{In what follows, if $P$ is a subgroup of $G$, we will denote by $B^{P}(r)$  the open ball of radius $r$ centered at the identity element with respect to the  
metric  on $P$ corresponding to the Riemannian structure induced from $G$.}

{Denote the group ${H^ - }{H^0}$ by $\tilde H$, and fix ${0 < \rho < 1}$ with the following properties:}
{
\begin{equation}\label{rho1}
 \text{the multiplication map }\tilde H \times H\to G\text{ 
is one to one  on }B^{{\tilde H}}(\rho) \times {B^H}(\rho), \end{equation}
and
\begin{equation}\label{conjugate implied}
{g_tB^{\tilde H}(r)g_{-t} \subset B^{\tilde H}(2r) \text{ for any $0<r<\rho$ and }t\ge 0}\end{equation}}
{(the latter can be done since  $F$ is $\Ad$-diagonalizable and  the restriction of the map $g \to g_tgg_{-t}$,
$t > 0$, to the subgroup $\tilde H$
is non-expanding).} 
\smallskip

\ignore{In this section we first show that $H$, the unstable horospherical subgroup with respect to $F^+$ satisfies {\rm (EEP)}. Next we show that for $G$ and $\Gamma$ as in \equ{slmn}, $P$ as in \equ{subgroup mix} and $F^+$ is as in \equ{generalgt}, $P$ satisfies (EEP).
For convenience, let us assume that $G/\Gamma = \SL_{m+n}(\R)/ \SL_{m+m}(\Z)$ and
\eq{gt1}{g_t =  \diag(e^{t/m}, \ldots,
e^{t/m}, e^{-t/n}, \ldots,
e^{t/n})\,,\quad t > 0\,.
}
Then $H=P=\left\{ \left( {\begin{array}{*{20}{c}}
{{I_m}}&A\\
0&{{I_n}}
\end{array}} \right) : A\in  {M_{m,n}}(\mathbb{R})\right\}$. We prove that $H$ satisfies (EEP) in this case. Since our proof will be independent of these choices of $G,\Gamma$ and $F^+$, it will be readily extend to the more general set up that we are working in this paper. The goal is to effectivize the proofs from \cite{KM4}.   
We start with the following Lemma from \cite{KM4} .   
\begin{lem}\label{der estimate}
{\rm (a)} 
For any
$\,r > 0$, 
there exists a nonnegative function $\theta\in
\cic(\br^N)$ such that 
{\rm supp}$(\theta)$ is inside $B(r)$, $\int_{\br^N} \theta
= 1$, and
$\|\theta\|_{\ell{,2}}\ll   r^{-(\ell+N/2)}$. 

{\rm (b)} Given 
$\theta_1\in
\cic(\br^{N})$, $\theta_2\in\cic(\br^{N})$, define
$\theta\in
\cic(\br^{N})$ by \newline $\theta(x) =
\theta_1(x)\theta_2(x)$. Then
$\|\theta\|_{\ell{,2}}\ll
\|\theta_1\|_{\ell{,2}}
\|\theta_2\|_{C^\ell}$.  

{\rm (c)} Given 
$\theta_1\in
\cic(\br^{N_1})$, $\theta_2\in\cic(\br^{N_2})$, define
$\theta\in
\cic(\br^{N_1+N_2})$ by $\theta(x_1,x_2) =
\theta_1(x_1)\theta_2(x_2)$. Then
$\|\theta\|_{\ell{,2}}\ll
\|\theta_1\|_{\ell{,2}}
\|\theta_2\|_{\ell{,2}}$. 
\end{lem}   
Let $r_3>0$ be such such
that {the multiplication map $\tilde H \times H\to G$
is one to one} on $B^{{\tilde H}}(r_3) \times {B^H}(r_3)$.
By effectivizing the proof of Theorem 2.3 in \cite{KM4} one arrives at}

Let $\mu^G$ be the Haar measure on $G$ which locally projects to $\mu$, and 
let us choose Haar measures $\nu^-$, $\nu^0$ and $\nu$ on  $H^-$, $H^0$  and $H$
respectively,  normalized  so that 
$\mu$ is 
locally almost the product of $\nu^-$, $\nu^0$ and $\nu$.
More precisely, see \cite[Ch.~VII, \S 9, Proposition 13]{Bou},
$\mu$ can be expressed via $\nu^-$, $\nu^0$ and $\nu$ in the
following way: for any $\varphi\in L^1(G,\mu^G)$ supported on a small neighborhood of idenity,
\eq{bou}{
{\int_{G} \varphi(g)\, d\mu(g)} = 
{\int_{H^-\times H^0\times H}\varphi(h^-h^0h)\Delta(h^0)\,d\nu^-(h^-)\,
d\nu^0(h^0) \, d\nu(h)}\,,
}
where $\Delta$ is the modular function of (the non-unimodular group)
$\tilde H$.

Now we are going to show, following \cite{KM4}, that  $H$, the unstable horospherical subgroup of $G$ with respect to $F^+$, satisfies property  (EEP).  We will start with an auxiliary statement, {essentially\footnote{{The statement of \cite[Theorem 2.3]{KM4} featured a constant $E(\psi)$ in place of $\max\big(\|\psi\|_{C^1},\|\psi\|_{\ell{,2}}\big)$, but it is easy to see from the proof that $E$ depends linearly on  $\|\psi\|_{C^1}$ and $\|\psi\|_{\ell{,2}}$.}}} established in \cite[Theorem 2.3]{KM4}:
 \begin{thm}\label{th1} Suppose that the flow $(X,F^+)$ is {exponentially
mixing}, and let $\gamma$ and $\ell$ be as in 
\equ{em}.
Then for any $f\in \cic(H)$,
$0 < r < \rho/2$ and $x \in X$, if

{\rm (i)} $\supp f \subset B^H(r)$, and

{\rm (ii)} $\pi_x$ is injective on $B^G(2r)$,

\noindent then for any 
$\psi\in
C^\infty_2(X)$  with $\int_X\psi\,d\mu = 0$ 
and 
 any $\,t\ge 0$ one has
$$
\left| I_{f,\psi}(g_t,x) 
\right|\ll\ {\max\big(\|\psi\|_{C^1},\|\psi\|_{\ell{,2}}\big) }  \left(r 
{\|f\|_1} +
e^{-\gamma
t}r^{-(\ell + \tilde k/2)} \|f\|_{\ell{,2}}
\right),
$$
where $\tilde k = \dim \tilde H.$ 
\end{thm}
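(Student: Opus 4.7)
\textbf{Proof plan for Theorem \ref{th1}.} The idea, following the thickening technique of \cite{KM1, KM4}, is to replace the integral $I_{f,\psi}(g_t,x)$ along the unstable direction $H$ by a genuine integral on $X$ against a bump in the complementary direction $\tilde H = H^- H^0$, and then to apply exponential mixing. The error introduced by thickening is controlled by the $C^1$-norm of $\psi$ together with the crucial contraction property \equ{conjugate implied}.

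First I would fix a nonnegative $\theta \in C^\infty_c(\tilde H)$ supported in $B^{\tilde H}(r)$ with $\int_{\tilde H}\theta(\tilde h)\Delta(h^0)\,d\nu^-\,d\nu^0 = 1$, satisfying the standard Sobolev estimate $\|\theta\|_{\ell,2} \ll r^{-(\ell + \tilde k/2)}$, where $\tilde k = \dim \tilde H$. Using the local product decomposition \equ{rho1} I define $\tilde f \in C^\infty_c(G)$ by $\tilde f(\tilde h\, h) := \theta(\tilde h)f(h)$; this is supported in $B^{\tilde H}(r)\cdot B^H(r) \subset B^G(2r)$. By \equ{bou} one obtains $\int_G \tilde f\,d\mu^G = \int_H f\,d\nu$ and, by a Leibniz-type inequality applied to a product of functions depending on complementary variables, $\|\tilde f\|_{\ell,2} \ll \|\theta\|_{\ell,2}\|f\|_{\ell,2} \ll r^{-(\ell+\tilde k/2)}\|f\|_{\ell,2}$. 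Since $\pi_x$ is injective on $B^G(2r)$, pushing forward via $\pi_x$ produces a smooth function $\bar f$ on $X$ with the same integral and the same $L^2$-Sobolev norm.

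Next I would apply exponential mixing. By a change of variables,
\[
J(t) := \int_X \bar f(y)\,\psi(g_t y)\,d\mu(y) = (g_t^{-1}\bar f, \psi),
\]
and the hypothesis $\int_X \psi\,d\mu = 0$ together with \equ{em} gives
\[
|J(t)| \ll e^{-\gamma t}\|\bar f\|_{\ell,2}\|\psi\|_{\ell,2} \ll e^{-\gamma t}\,r^{-(\ell+\tilde k/2)}\,\|f\|_{\ell,2}\,\|\psi\|_{\ell,2}.
\]
This yields the main term. To recover $I_{f,\psi}(g_t,x)$ I unfold $J(t)$ using \equ{bou}:
\[
J(t) = \int_{\tilde H}\int_H \theta(\tilde h)f(h)\,\psi(g_t\tilde h\, h\, x)\,\Delta(h^0)\,d\nu(h)\,d\nu^-(h^-)\,d\nu^0(h^0).
\]
The key step is to write $g_t \tilde h\, h\, x = (g_t\tilde h g_{-t})(g_t h\, x)$ and invoke \equ{conjugate implied}, which gives $g_t\tilde h g_{-t}\in B^{\tilde H}(2r)$, hence $\dist(g_t\tilde h\, h\, x,\, g_t h\, x)\ll r$. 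Therefore $|\psi(g_t\tilde h\, h\, x)-\psi(g_t h\, x)| \ll r\|\psi\|_{C^1}$, and integrating this estimate, together with $\int \theta\Delta\,d\nu^-d\nu^0 = 1$, yields
\[
|J(t) - I_{f,\psi}(g_t,x)| \ll r\,\|\psi\|_{C^1}\,\|f\|_1.
\]
Combining this with the bound on $|J(t)|$ via the triangle inequality produces the desired inequality.

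The step I expect to be most delicate is the error estimate of Step~5: one must ensure that the perturbation $g_t\tilde h g_{-t}$ remains of size $O(r)$ \emph{uniformly in $t \ge 0$}, which is exactly what \equ{conjugate implied} provides, and that the constant implicit in $\Delta(h^0)$ is bounded on $B^{\tilde H}(\rho)$. Everything else is bookkeeping: choosing $\theta$ with the right Sobolev estimate, verifying that the Leibniz rule on the product $\theta(\tilde h)f(h)$ produces the clean bound $\|\tilde f\|_{\ell,2}\ll \|\theta\|_{\ell,2}\|f\|_{\ell,2}$, and tracking the injectivity of $\pi_x$ on $B^G(2r)$ to legitimately push down $\tilde f$ to $X$ without loss in the Sobolev norm.
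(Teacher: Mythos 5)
Your proposal is correct and reproduces essentially the same thickening argument used in \cite[Theorem 2.3]{KM4}, which is the source the paper cites for Theorem~\ref{th1} rather than giving its own proof. The only cosmetic slip is the identity $J(t) = (g_t^{-1}\bar f,\psi)$: under the Koopman convention $(g_t\varphi)(y) = \varphi(g_{-t}y)$ used in the definition \equ{em}, the change of variables gives $J(t) = (g_t\bar f,\psi)$ for $t\ge 0$, which is the form to which \equ{em} applies directly; the estimate you derive is nevertheless the right one.
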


Using this {and again following \cite{KM4}, we can establish}

\begin{thm}\label{thmheep}$H$ satisfies property {\rm (EEP) {with respect to the flow $(X,F^+)$}}.\end{thm}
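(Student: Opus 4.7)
The strategy is to derive (EEP) for $H$ from Theorem \ref{th1} by a partition-of-unity argument on $B^H(1)$, combined with an optimization in the partition scale. By the Remark, we may assume $\int_X \psi\,d\mu = 0$. Fix a scale $r \in (0,\rho/2)$ (to be chosen below as $r=e^{-\lambda_0 t}$) and pick a smooth partition of unity $\{\eta_i\}_{i\in I}$ of a neighborhood of $B^H(1)$ with $\supp\eta_i\subset B^H(h_i,r)$, $|I|\ll r^{-k}$ where $k:=\dim H$, and $\|\eta_i\|_{C^\ell}\ll r^{-\ell}$. Setting $f_i:=f\eta_i$ gives $f=\sum_i f_i$ with $\|f_i\|_{C^\ell}\ll r^{-\ell}\|f\|_{C^\ell}$ by Leibniz.

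The crucial manipulation is to shift the base point, rather than translate $\psi$. For each $i$, the right-invariance of both the induced metric on $H$ and the Haar measure $\nu$ allow the substitution $h=h'h_i$ to recast
\[
I_{f_i,\psi}(g_t,x)=\int_{B^H(r)}\tilde f_i(h')\,\psi(g_t h'(h_ix))\,d\nu(h')=I_{\tilde f_i,\psi}(g_t,h_ix),
\]
where $\tilde f_i(h'):=f_i(h'h_i)$ is supported in $B^H(r)$ and $\psi$ is untouched. Since $\Ad(h_i)$ has operator norm uniformly bounded for $h_i\in B^H(1)$, conjugation by $h_i$ distorts stabilizer lengths by only a multiplicative constant, so $r_0(h_ix)\gg r_0(x)$. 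Choosing $\lambda_0$ small enough, the hypothesis $t\ggplus\log(1/r_0(x))$ ensures $r<\rho/2$ and $2r<r_0(h_ix)$, so Theorem \ref{th1} applies to each $I_{\tilde f_i,\psi}(g_t,h_ix)$.

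Applying Theorem \ref{th1} to each piece gives
\[
|I_{\tilde f_i,\psi}(g_t,h_ix)|\ll \max(\|\psi\|_{C^1},\|\psi\|_{\ell,2})\bigl(r\|\tilde f_i\|_1+e^{-\gamma t}r^{-(\ell+\tilde k/2)}\|\tilde f_i\|_{\ell,2}\bigr).
\]
Using $\|\tilde f_i\|_p=\|f_i\|_p$ (right-invariance again), together with $\sum_i\|f_i\|_1=\|f\|_1\ll\|f\|_{C^\ell}$ and the bump estimate $\|f_i\|_{\ell,2}\ll r^{k/2-\ell}\|f\|_{C^\ell}$ giving $\sum_i\|f_i\|_{\ell,2}\ll r^{-(k/2+\ell)}\|f\|_{C^\ell}$, summing over $i$ yields
\[
|I_{f,\psi}(g_t,x)|\ll \max(\|\psi\|_{C^1},\|\psi\|_{\ell,2})\,\|f\|_{C^\ell}\,\bigl(r+e^{-\gamma t}r^{-(k/2+2\ell+\tilde k/2)}\bigr).
\]
Balancing the two terms by setting $r=e^{-\gamma t/M}$ with $M:=1+k/2+2\ell+\tilde k/2$ then gives (EEP) with $\lambda:=\gamma/M>0$.

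The main subtlety is the right-invariance trick in the second paragraph. The naive alternative of writing $g_th_i=(g_th_ig_{-t})g_t$ and translating $\psi$ by $L_{g_th_ig_{-t}}\psi$ would blow up its Sobolev norms by a factor that is exponential in $t$, since $g_t$ expands $H$ and so $g_th_ig_{-t}$ can have norm as large as $e^{Ct}$; this would destroy the exponential decay. Shifting the base point via right-invariance avoids this loss entirely, and the only residual cost is the comparison $r_0(h_ix)\gg r_0(x)$, which is cleanly absorbed into the hypothesis $t\ggplus\log(1/r_0(x))$.
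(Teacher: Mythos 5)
Your proof is correct and takes essentially the same route as the paper: localize $f$ to scale $r$, shift the base point $x\mapsto h_ix$ via translation-invariance of $\nu$ (rather than translating $\psi$, which as you note would cost an exponential factor), apply Theorem \ref{th1} to each piece, and optimize $r=e^{-\beta t}$. The only difference is cosmetic: the paper realizes this decomposition as a continuous average $I_{f,\psi}(g_t,x)=\int_H I_{f_h,\psi}(g_t,hx)\,d\nu(h)$ with $f_h(y)=f(yh)\,\theta(y)$ for a bump function $\theta$ supported on $B^H(r)$ with $\int_H\theta\,d\nu=1$, whereas you use a finite partition of unity at scale $r$; the bookkeeping of $|I|\ll r^{-k}$ against $\|f_i\|_{\ell,2}\ll r^{k/2-\ell}\|f\|_{C^\ell}$ reproduces exactly the paper's Sobolev estimate $\|f_h\|_{\ell,2}\ll r^{-(\ell+k/2)}\|f\|_{C^\ell}$, and both yield the same exponent $\lambda=\gamma/(1+2\ell+N/2)$.
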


{For the proof and for later applications we will need the following lemma, which is a modification of \cite[Lemma 2.4.7(b)]{KM1} and \cite[Lemma 2.2(a)]{KM4}:
\begin{lem}
\label{estimateKM} Let $G$ be a Lie group of dimension $N$. Then for each {$\ell\in \Z_+$} there exists   $M_\ell$ (depending only on $G$) with the following property: for any
$0 < {{\vre}} < 1$ there exists a nonnegative smooth function  $\varphi_{{\vre}}$ on $G$ such that
{
\begin{enumerate}
\item the support of $\varphi_{{\vre}}$ is inside the ball of radius ${{\vre}}$ centered at $e$;
\item $\|\varphi_{{\vre}}\|_1
= 1$; 
\item
$\| \varphi_{{\vre}} \|_{C^\ell} \le M_\ell \cdot {{\vre}} ^{-(\ell+N)}$;
\item
$\| \varphi_{{\vre}} \|_{\ell{,p}} \le M_\ell \cdot {{\vre}}^{-(\ell+\frac{p-1}pN)}$.
\end{enumerate}}
\end{lem}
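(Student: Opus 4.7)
The plan is to build $\varphi_\vre$ by first constructing a standard compactly supported mollifier on $\R^N \cong \mathfrak{g}$ and then transferring it to $G$ via the exponential map at the identity. The whole argument is local near $e$, so the fact that $G$ is a general Lie group rather than $\R^N$ will enter only through bounded (in $\vre$) distortion constants that get absorbed into $M_\ell$.

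First I would fix a nonnegative $\varphi_1 \in C^\infty_c(\R^N)$ supported in the unit ball with $\int_{\R^N}\varphi_1\,dy = 1$, and set $\psi_\vre(y) := \vre^{-N}\varphi_1(y/\vre)$. A direct change of variables gives $\int \psi_\vre = 1$, support in the Euclidean ball of radius $\vre$, and for any multi-index $\beta$,
\[
\|\partial^\beta \psi_\vre\|_{L^p(\R^N)} = \vre^{-|\beta| - \frac{p-1}{p}N}\|\partial^\beta \varphi_1\|_{L^p(\R^N)},\qquad \|\partial^\beta \psi_\vre\|_\infty = \vre^{-N-|\beta|}\|\partial^\beta \varphi_1\|_\infty.
\]
This already realizes the scaling asserted in (3) and (4) on $\R^N$, with constants depending only on $\varphi_1$ and $\ell$.

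Next I would transplant $\psi_\vre$ to $G$. Choose once and for all a relatively compact neighborhood $V$ of $0 \in \mathfrak{g}$ on which $\exp : \mathfrak{g} \to G$ is a diffeomorphism onto a neighborhood $W$ of $e$, and pick $c > 0$ so that $\exp$ maps the Euclidean ball of radius $c\vre$ into the Riemannian ball of radius $\vre$ for all $\vre \in (0,1]$. Set
\[
\varphi_\vre(g) := \frac{1}{Z_\vre}\,\psi_{c\vre}\bigl(\exp^{-1}(g)\bigr)\quad (g\in W),\qquad \varphi_\vre \equiv 0\ \text{on}\ G\setminus W,
\]
where $Z_\vre := \int_G \psi_{c\vre}\circ\exp^{-1}\,d\mu^G$. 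Because the Jacobian of $\exp$ (relating Haar to Lebesgue) is smooth, equals $1$ at the origin, and is bounded above and below on $V$, the normalizer $Z_\vre$ stays in a fixed compact interval $[c_1,c_2]\subset(0,\infty)$ uniformly in $\vre$. Property (1) holds by the choice of $c$, and (2) holds by construction.

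Finally I would check (3) and (4). In the exponential chart the left-invariant basis fields $Y_i$ become smooth vector fields on $V$ whose coefficients (in the $\partial_j$ basis) are analytic and equal to $\delta_{ij}$ at $0$; hence for any $\alpha$ with $|\alpha|\le \ell$,
\[
|(D^\alpha \varphi_\vre)(g)| \ \le\ C_\ell \sum_{|\beta|\le |\alpha|}\bigl|\partial^\beta \psi_{c\vre}(\exp^{-1} g)\bigr|
\]
with $C_\ell$ depending only on the fixed chart and on $\ell$. Taking $L^p$-norms against $d\mu^G$ and using the bounded Jacobian to pass to Lebesgue measure on $V$, together with the scaling formulas for $\psi_{c\vre}$ above and the fact that powers of $c$ are harmless, yields (3) and (4) with a single constant $M_\ell$ depending only on $G$, $\ell$, and the fixed $\varphi_1$. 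I expect the only mildly delicate step to be keeping track of the distortion from writing the invariant operators $D^\alpha$ in Euclidean coordinates; everything else is routine scaling.
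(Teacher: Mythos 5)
Your construction is correct, and it fills a gap the paper itself leaves open: the paper does not actually prove Lemma~\ref{estimateKM}, but merely cites it as a modification of Lemma~2.4.7(b) of \cite{KM1} and Lemma~2.2(a) of \cite{KM4}. Those references use essentially the same idea you do --- a scaled Euclidean mollifier transplanted to $G$ through the exponential chart, with the distortion of the chart (Jacobian of $\exp$, coefficients of the left-invariant fields $Y_i$ in Euclidean coordinates) absorbed into a constant depending only on $G$, $\ell$ and $p$. Your scaling computations $\|\partial^\beta\psi_\vre\|_{L^p}=\vre^{-|\beta|-\frac{p-1}{p}N}\|\partial^\beta\varphi_1\|_{L^p}$ and $\|\partial^\beta\psi_\vre\|_\infty=\vre^{-N-|\beta|}\|\partial^\beta\varphi_1\|_\infty$ are correct, the normalizer $Z_\vre$ is indeed bounded in $[c_1,c_2]$ uniformly in $\vre\in(0,1]$ since the modular density is smooth and positive on a fixed neighborhood of $0$, and the estimate $|D^\alpha\varphi_\vre|\le C_\ell\sum_{|\beta|\le|\alpha|}|\partial^\beta\psi_{c\vre}\circ\exp^{-1}|$ follows because $D^\alpha=Y_1^{\alpha_1}\cdots Y_n^{\alpha_n}$ expands in the chart as a differential operator of order $\le|\alpha|$ with smooth coefficients bounded (along with their derivatives up to order $\ell$) on the fixed compact $V$. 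In short: correct proof, same route as the references the paper invokes.
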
}

\begin{proof} [Proof of Theorem {\ref{thmheep}}]Suppose we are given  
$f\in \cic(H)$ with $\supp f \subset B^H(1)$,  
$\psi\in
C^\infty_2(X)$ with $\int_X\psi\,d\mu = 0$, 
 and $x \in X$. 
 Put $r = e^{-\beta t}$, where
$\beta$ is to be specified later, and {take $\ell$ as in  \equ{em}. Then, using 
Lemma \ref{estimateKM} with $G$ replaced by $H$,} take a non-negative {smooth} function $\theta$ supported on $B^H(r)$ such that
 \eq{theta}{\int_H
\theta\,d\nu 
= 1\text{ and   }\|\theta\|_{\ell{,2}} 
\ll   r^{{-(\ell + k/2)}},} where $k = \dim H = N - \tilde k$.
Since $\nu$ is translation-invariant, one can 
 write
$$
\aligned
I_{f,\psi}(g_t,x) &= 
\int_H
f(h)\psi(g_t hx)\,d\nu(h)
\int_H
\theta(y)\,d\nu(y)\\&= 
\int_H
\int_H
f\big(y h\big)\theta(y)\psi\big(g_tyhx\big)\,d\nu(y)\,d\nu(h)\\&=\int_H
\int_H
f\big(y h\big)\theta(y)\psi\big(g_t y 
hx\big)\,d\nu(y)\,d\nu(h)\,.
\endaligned
 $$
{Note that, as long as $\theta(y)\ne 0$, the supports of all  functions of the form $h\mapsto
f\big(yh\big)$ are contained in  $\tilde B := B^H(2).$ 
We would like to apply Theorem \ref{th1} with $r = e^{-\beta t}$, $
hx $ 
in place of $x$ and $$f_h(y)
:=  f\big(y h\big)\theta(y)$$ in place of $f$. It is clear that $\supp\, f_h \subset B^H(r)$ for any $h$, i.e.\ condition (i) of Theorem~\ref{th1} is satisfied. For other conditions we need to require $e^{-\beta t} \le \min \big(r_0(hx)/2, \rho/2\big)$. {Since $r_0(hx)\gg r_0(x)$ as long as $h\in \tilde B$, it}  amounts to assuming \eq{anotherconditionont}{2e^{-\beta t} \le {a_0}\min \big(r_0({x})
, \rho\big)}
{for some uniform constant $a_0 > 0$.}
Also, in view of \cite[Lemma 2.2(b)]{KM4} and \equ{theta}, we have $$\|f_h\|_{\ell{,2}} \ll \| f\|_{C^\ell} \|\theta\|_{\ell{,2}} 
\ll   
e^{(\ell + k/2)\beta t} \| f\|_{C^\ell}.$$
Then from Theorem \ref{th1} one gets
$$
\aligned
\left| I_{f,\psi}(g_t,x) \right|&=\left|\int_{\tilde{B}}\int_H
f\big(yh\big)\theta(y)\psi\big(g_t y 
 hx\big)\,d\nu(y)\,d\nu(h)\right| \le 
\int_{\tilde B}\left|I_{f_h,\psi}(g_t,h x)
\right|\,d\nu(h)\\ &\ll  
{\max\big(\|\psi\|_{C^1},\|\psi\|_{\ell{,2}} \big)} \left(e^{-\beta t} 
\int_H |f_h|\,d\nu(h)\,
 +
e^{(\ell + \tilde k/2)\beta t} \|f_h\|_{\ell{,2}}
\cdot  e^{-\gamma
t}\right) \nu(\tilde B)
\\ &\ll  {\max\big(\|\psi\|_{C^1},\|\psi\|_{\ell{,2}} \big)}
\left(\sup|f|\cdot e^{-\beta t} 
 +
\| f\|_{C^\ell}  \cdot 
e^{-(\gamma - (2 \ell + \frac N2)\beta)
t}\right).\,
\endaligned
$$ }
An elementary computation shows that  choosing $\beta$  equalizing the
two exponents
above will produce 
{$$  \beta = \lambda = \frac \gamma {1 +2 \ell+N/2}\,,$$ and therefore
 \equ{anotherconditionont} becomes equivalent to  \equ{conditionont} with some uniform constants $a,b$. This shows that  \equ{conditionont} implies \equ{eep}, and finishes the proof.}
 \end{proof}

\section{Proving Theorem \ref{Main Theorem} {and Corollary \ref{Cor2}}  }\label{main Section}

We now assume Theorem \ref{Main Theorem1} is true and give a proof of
Theorem \ref{Main Theorem}.

\begin{proof}[Proof of Theorem \ref{Main Theorem} assuming Theorem \ref{Main
Theorem1}]
Let {$r''$} be as in Theorem \ref{Main Theorem1}, and define
{\eq{radius}{r':= \min  \big(1/4, r'',\rho \big)}
where $\rho$ is as in \eqref{rho1}, \eqref{conjugate implied}.  For any $r \le \rho$ choose $s$ such that ${B
}(s)$ is contained in the product  $B^{{\tilde H}}(r/4){B^H}(r/4)$.} 
Now take   $U\subset X$ such that $U^c$ is compact, and for  $x\in X$ denote 
\eq{reduced set}{{{E_{x,s}}:=}\, \{ g \in {B
}(s):gx \in E({F^ + },U)\}.}
%
In view of the countable stability of Hausdorff dimension, in order to prove the theorem it suffices to prove that for any  $x \in X$,
\eq{need}{
\dim E_{x,s}  \le  \dim X - C\frac{{\mu ({\sigma _{r}}
{U})}}{{\log  \frac{1}{r} + \log \frac{1}{{\mu ({\sigma _{r}}{U})}}}}\ }
with the constant $C>0$  only {dependent on $X$ and $F$}. Indeed, $E({F^ + },U)$ can be covered by countably many sets $\{gx : g \in E_{x,s}\}$, with the maps 
{$\pi_x: E_{x,s} \to X$}
being Lipschitz and at most finite-to-one.


\ignore{So let {$0<r<\min  \big(r_0(\partial_{1}U^c), r' \big)$}. Since $G$ is locally {a}
direct product of $H$ and  
{$\tilde H$}, there exists $0<r_2<r/8$ 
such
that {the multiplication map $\tilde H \times H\to G$
is one to one} on $B^{{\tilde H}}(r_2) \times {B^H}(r_2)$, and $${O_{r_2}} :=  B^{{\tilde H}}(r_2){B^H}(r_2)$$ contains the ball
${B
}(s)$ 
 for some $0 < s < r_2 < r/8$.}
Since every ${g\in B(s)}
$ can be written as $g = h'h$, where
$h' \in {B^{\tilde H}}(r/4)$ and $h \in {B^{H}}(r/4)$, 
for any $y \in X$ we can write
\eq{transition1}{
\begin{aligned}
{\dist}({g_t}gx,y) &\le {\dist}({g_t}h'hx,{g_t}hx) + {\dist}({g_t}hx,y)\\ &={\dist}\big(g_th'g_{-t}{g_t}hx,{g_t}hx\big)+ {\dist}({g_t}hx,y).\end{aligned}}
Hence  in view of \eqref{conjugate implied},
$g \in {E_{x,s}}$ implies that {$h{x}$ belongs to $E({F^ + },{\sigma
_{{r/2}}}U)$}, and by using
Wegmann's Product Theorem \cite{Weg} we conclude that: 
\eq{wegmann}
{\begin{split}
\dim {E_{x,s}} 
&\le {\dim} \left(\{ h \in {B^H}(r/4):hx \in E({F^ + },{\sigma
_{r/2}}U)\} \times {B^{\tilde H}(r/4)}
\right)\\
& \le {\dim} \big(\{ h \in {B^H}(r/4):hx \in E({F^ + },{\sigma
_{r/2}}U)\}\big) +  \dim  {\tilde H }.
\end{split}
}
\ignore{{which is not greater than ${\dim} \big(\{ h \in {B^H}(\frac{r}{8}):hx \in E({F^ + },{\sigma
_{4r_2}}U)\}\big) +  \dim  {\tilde H }$} }
Since $ {\partial _{1/2}}({\sigma _{r/2}U)^c}$ is contained in $  {\partial _1}{U^c}$, we have: 
\[
{r_0}({\partial _1}{U^c})  \leqslant {r_0}\left({\partial _{1/2}}({\sigma _{r/2}U)^c}\right).  \]
Therefore, {by Theorem \ref{thmheep} and Theorem \ref{Main Theorem1}  applied to $P = H$ and $U$ replaced by $\sigma_{r/2}U$}, there exists a  {constant $C>0$}, only dependent on $X$ and $F$, such
that  {the set $\{ h \in {B^H}(r/4):hx \in E({F^ + },{\sigma
_{r/2}}U)\}$ has Hausdorff dimension at most
\eq{estimateH}{\begin{split} 
\dim H - C\frac{{\mu ({\sigma _{r/4}}{\sigma _{r/2}}U)}}{{\log \frac{4}{r} + \log \frac{1}{{\mu ({\sigma _{r}}U)}}}} 
& \le \dim H - C\frac{{\mu ({\sigma _{r}}U)}}{{\log \frac{4}{r} + \log \frac{1}{{\mu ({\sigma _{r}}U)}}}} \\
& \le \dim H - C'\frac{{\mu ({\sigma _{r}}U)}}{{\log \frac{1}{r} + \log \frac{1}{{\mu ({\sigma _{r}}U)}}}},
\end{split}}
where {$C'= 2C$}. ($C'$ should be chosen so that we have $$C' \geqslant C \cdot \frac{{\log \frac{4}{r} + \log \frac{1}{{\mu ({\sigma _{r}}U)}}}}{{\log \frac{1}{r} + \log \frac{1}{{\mu ({\sigma _{r}}U)}}}} = C\cdot \left( {1 + \frac{{\log 4}}{{\log \frac{1}{r} + \log \frac{1}{{\mu ({\sigma _{r}}U)}}}}} \right).$$  Since {$r<1/4$}, we can choose {$C'= 2 C$.}) 
It follows from \equ{wegmann} and \equ{estimateH}  that 
$$
\dim {E_{x,s}} \le  \dim X - C'\frac{{\mu ({\sigma _{r}}U)}}{{\log \frac{1}{r} + \log \frac{1}{{\mu ({\sigma _{r}}U)}}}},
$$
which finishes the proof. }
\end{proof}

\begin{proof}[Proof of Corollary \ref{Cor2}] {Take $r'$ as in \equ{radius}. If $S = \varnothing$ there is nothing to prove. Otherwise, by   Theorem \ref{Main Theorem} 
{applied to $U = \partial_rS$ and  with $r/2$ in place
of $r$}, there exists a constant $C>0$ 
independent of $S$ such that for any  {$0<r< \min  \big(r_0(X)
,r' \big)
$}, the set $E({F^ +
},{\partial _r}S)$  has Hausdorff codimension at most
\eq{partial 1}
{
 {C}\frac{{\mu \left({\sigma _{r/2}}({\partial _r}S)\right)}}{{\log \frac{2}{r} + \log \frac{1}{{\mu ({\sigma _{r/2}}({\partial _r}S))}}}}
 \ge  {
 {C}\frac{{\mu ({\partial _{r/2}}S)}}{{\log \frac{2}{r}
+ \log \frac{1}{{\mu ({\partial _{r/2}}S)}}}}}. 
}
Since $S$ is non-empty, ${\partial _{r/2}}S$ contains a ball of radius $r/2$, so there exists a constant {$d_0$} independent of $r$ such that for any {$0<r< 
r_0(X)
$} we have:
\eq{partial 2}
{\mu ({\partial _{r/2}}S) \geqslant {{d_0}}{r^N
}.}
Since {$r'<1/4$}, by combining \equ{partial 1} and \equ{partial 2} it is easy to see that 
the set $E({F^ +
},{\partial _r}S)$  has Hausdorff codimension at most
{\[
{C}\frac{{\mu ({\partial _{r/2}}S)}}{{(N
+1)\log \frac{1}{r}
+ \log 2 + \log \frac{1}{{{d_0}}}}} \ge  \frac{{C\log 4}}{{(N
+1)\log 4 + \log2 + \log \frac{1}{{{d_0}}}}} \cdot\frac{{\mu ({\partial _{r/2}}S)}}{{\log \frac{1}{r}}}.\]}
 This proves the main part of the corollary.}

{For the ``consequently" part, if $S$ is a $k$-dimensional compact embedded submanifold in $X$, then it is
easy to see that 
for some constant {$d_1$}
dependent on $S$ {and  for all $r< 
r_0(X)
$ one has}  \eq{partial 3}{\mu ({\partial _{r/2}}S)\, {\ge} \,{d_1}{r^{N
- k}}.}
Therefore in this case, combining \equ{partial 1} and \equ{partial 3}, it is easy to see that for any  {$0<r< \min  \big(r_0(X)
,r' \big)
$} one has
{$$
\codim  E(F^+,\partial_{r}S)  \ge\frac{{C\log 4}}{{(N
- k + 1)\log 4 + \log 2 + \log \frac{1}{{{d_1}}}}} \cdot\frac{{{r^{N
- k}}}}{{\log \frac{1}{r}}} .$$}}
\end{proof}
\section{Reduction to a covering result} \label{main Section1}
In the next {three} sections our goal is to prove Theorem \ref{Main Theorem1}. Fix a
subgroup $P$ of $H$ that
satisfies
(EEP) {relative to $F^+$}, and fix a Haar measure $\nu$  on $P$.  Put
 $L=\dim P$.  Also take $0<r''<1/8$ such that the exponential map from $\frak p := \Lie(P)$   to $P$ is $2$-bi-Lipischitz on the ball of radius $r''$ centered at $0\in \frak p$,
{The latter implies that there exist constants $c_1,c_2,c_3 > 0$  such that for any  $0<r<r''$ one nas
\eq{Ball measure}{{c_1}{r^L} \leqslant \nu \big(B^P(r)\big) \leqslant {c_2}{r^L}}
and
\eq{Ball measure derivative}{
\frac d{dr}\nu \big(B^P(r)\big) \le c_3r^{L-1}.}
}

For $x \in X$, $t>0, {k \in \N}$ and {a} subset {$S$} of $X$ we define
\eq{escape 1}{{{A}^P(t,r,{S} ,{k},x)\mathop  := \big \{ {h} \in
B^P(r) :{g_{{\ell} t}}{h}x \in {S}  \,\,\,  {\forall \ell \in \{1,2,\cdots,k  \}\mkern 1mu}   \big\}}.}

{Also, let us define
$$\lambda_{\max}:=\max \{ |\lambda |:\,\lambda \text{ is an eigenvalue of }\ad_{{g_1}}|_{\frak p}\}.$$}
{One of our main goals in the next three sections will be to prove the following theorem:
\begin{thm}\label{main cov} Let ${F^+}$ be a one-parameter {$\Ad$-}diagonalizable
{sub}semigroup of $G$, and $P$ a subgroup of $G$ with property {\rm (EEP)}. Then there exist positive constants {$a,b,K_0,K_1,K_2$ and
$\lambda_1$} 
such that 
for any subset $U$ of $X$ whose complement is compact, 
any $0<r<r_0$ {where 
\eq{r0}{r_0 :=  \min \big(r_0(\partial_{1/2} U^c),{r''} \big),}}
any $x\in \partial_{
r}{U^c}$, 
 {$k\in\N$ and any
{\eq{t estimate}{t> a +b \log \frac{1}{r},}}}
the set ${A}^P\Big(t,{\frac{r}{16 \sqrt{L}}},{U^c},{k},x\Big)$ can be covered with at most
$$
K_0{e^{Lk \lambda_{\max}t}} \left(1 - K_1 \mu (\sigma_rU) +\frac{K_2 e^{- \lambda_1 t}}{r^L}  \right)^k$$
balls in $P$ of radius $re^{-k\lambda_{\max}t}$. 
\end{thm}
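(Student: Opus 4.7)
The plan is to prove the covering estimate by induction on $k$, establishing the one-step recursion
\[
N_k \le e^{L\lambda_{\max} t}\Bigl(1 - K_1 \mu(\sigma_r U) + K_2 e^{-\lambda_1 t} r^{-L}\Bigr) N_{k-1},
\]
where $N_k$ denotes the minimum number of $P$-balls of radius $\rho_k := re^{-k\lambda_{\max}t}$ needed to cover ${A}^P(t, r/(16\sqrt L), U^c, k, x)$. Since $N_0 = 1$ (the initial set fits inside a single ball of radius $r = \rho_0$), iterating yields the theorem with $K_0$ absorbing the base-case constant. Each inductive step consists of a volumetric refinement, which replaces each parent ball of radius $\rho_{k-1}$ by $O(e^{L\lambda_{\max}t})$ sub-balls of radius $\rho_k$, combined with a survival estimate derived from (EEP), which will show that only a fraction $\asymp 1 - K_1\mu(\sigma_r U) + K_2 e^{-\lambda_1 t}r^{-L}$ of these sub-balls intersect the next level.

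For the inductive step, fix a ball $B^P(h_j, \rho_{k-1})$ from the level-$(k-1)$ cover and let $n_j$ be the covering number of its intersection with ${A}^P(t, r/(16\sqrt L), U^c, k, x)$ by balls of radius $\rho_k$. Take a maximal $\rho_k$-separated subset $\{h_j^{(\ell)}\}_{\ell=1}^{n_j}$ of this intersection; the balls of radius $2\rho_k$ around these suffice to cover. Because $F^+$ normalizes $P$ and conjugation by $g_{kt}$ expands $P$-distances by at most $e^{k\lambda_{\max}t}$, each disjoint ball $B^P(h_j^{(\ell)}, \rho_k/2)$ is mapped by $g_{kt}(\cdot)x$ into the $r/2$-neighborhood of $g_{kt} h_j^{(\ell)} x \in U^c$, hence into $X\setminus\sigma_{r/2}U$. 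Pick a non-negative $\psi \in C^\infty_2(X)$ with $\mathbf{1}_{X\setminus\sigma_{r/2}U} \le \psi \le \mathbf{1}_{X\setminus\sigma_{r/4}U}$ and $\max(\|\psi\|_{C^1},\|\psi\|_{\ell,2}) \ll r^{-(\ell+N/2)}$ (mollify at scale $\sim r$, cf.\ Lemma~\ref{estimateKM}); then $\int\psi\, d\mu \le 1 - \mu(\sigma_r U)$, and
\[
n_j\cdot \nu(B^P(\rho_k/2)) \le \int_{B^P(h_j, 2\rho_{k-1})} \psi(g_{kt} h x)\, d\nu(h).
\]

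To apply (EEP) to this integral, perform the change of variables $h \mapsto p := g_{(k-1)t} h h_j^{-1} g_{-(k-1)t}$, which identifies $B^P(h_j, 2\rho_{k-1})$ with a subset of $B^P(2r)$ (using $2\rho_{k-1} e^{(k-1)\lambda_{\max}t} = 2r$) and has Jacobian $|\det\Ad(g_{(k-1)t})|_{\mathfrak p}|^{-1}$. Writing $y_j := g_{(k-1)t} h_j x$ and choosing a smooth bump $f$ on $B^P(2r)$ dominating the indicator of the image, with $\|f\|_{C^\ell}\ll r^{-\ell}$, the integral becomes a constant multiple of $I_{f,\psi}(g_t, y_j)$, and crucially the flow time is $t$ rather than $kt$. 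Since $y_j$ lies within distance $r$ of the compact set $U^c$, its injectivity radius is controlled by $r_0(\partial_{1/2}U^c)$; the hypothesis $t \ge a + b\log(1/r)$ ensures \equ{conditionont} holds. Applying (EEP), and using $|\det\Ad(g_{(k-1)t})|_{\mathfrak p}|^{-1}\int f\,d\nu \asymp \nu(B^P(2\rho_{k-1}))$ together with $\nu(B^P(2\rho_{k-1}))/\nu(B^P(\rho_k/2)) \asymp e^{L\lambda_{\max}t}$, one obtains
\[
n_j \le C\, e^{L\lambda_{\max} t}\Bigl(1 - \mu(\sigma_r U) + O(e^{-\lambda t} r^{-2\ell - N/2})\Bigr);
\]
the polynomial factor $r^{-2\ell-N/2}$ is absorbed into $e^{-(\lambda-\lambda_1)t}$ by taking $\lambda_1<\lambda$ and enlarging $b$ so that $t\gg\log(1/r)$ swallows the polynomial. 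Summing over $j$ closes the induction. The main obstacle is the bookkeeping — matching the $k$-dependent Jacobian $|\det\Ad(g_{(k-1)t})|_{\mathfrak p}|^{-1}$ against the volume ratio $\nu(B^P(2\rho_{k-1}))/\nu(B^P(2r))$ so that (EEP) is invoked at the single scale $t$, keeping all implicit constants uniform in $k$ and $j$, and choosing $a$, $b$ large enough to satisfy both the injectivity-radius condition and the absorption of polynomial $r^{-1}$-factors into the exponential.
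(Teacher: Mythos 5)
Your proposal attempts essentially the ball-based inductive scheme of Kadyrov \cite{K}, which the paper explicitly identifies as the source of difficulty (see the footnote accompanying \equ{compwithkadyrov}: ``a similar step in the proof of [K, Theorem 3.1] uses balls instead of boxes, and the boundary effects make it difficult to justify the corresponding equality''). The key issue is quantitative and you dismiss it as bookkeeping, but in fact it is fatal to the approach as written.

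Concretely: at each inductive step you compare volumes of balls at comparable but unequal radii. For instance, $\nu\big(B^P(2\rho_{k-1})\big)/\nu\big(B^P(\rho_k/2)\big)$ is only $\asymp e^{L\lambda_{\max}t}$, with a geometric constant $C_{\mathrm{geom}}$ (of order $4^L c_2/c_1$ in the notation of \equ{Ball measure}, plus doubling constants from the separated-set argument) multiplying it. Thus your one-step recursion is really
$N_k \le C_{\mathrm{geom}}\, e^{L\lambda_{\max}t}\big(1 - K_1\mu(\sigma_r U) + K_2 e^{-\lambda_1 t} r^{-L}\big) N_{k-1}$
with $C_{\mathrm{geom}} > 1$. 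Iterating produces $C_{\mathrm{geom}}^k$, which cannot be absorbed into $K_0$ (it is not a constant), nor into the parenthesis (the parenthesis is close to $1$, and $C_{\mathrm{geom}}\cdot(1-K_1\mu(\sigma_r U)+\dots) > 1$ when $\mu(\sigma_r U)$ is small). The conclusion of the theorem requires the parenthetical factor to stay strictly below $1$; the accumulated $C_{\mathrm{geom}}^k$ destroys this.

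The paper avoids exactly this by replacing balls with a tessellation $V_r = \exp\big(\tfrac{r}{4\sqrt L}I_P\big)$ of $P$ (following \cite{KM1}) and with \emph{Bowen $(t,r)$-boxes} $g_{-t}V_r\gamma g_t$. The inductive step (Lemmas~\ref{covering}, \ref{covering of A^P}, Corollary~\ref{sub count}, together with Proposition~\ref{exponential mixing}) covers the surviving set by at most $N(r,t)$ Bowen boxes of the \emph{same} tile type at each level, and since the tiles do not overlap, one has the exact identity
$\big(\nu(V_r)/\nu(g_{-t}V_r g_t)\big)^k = \nu(V_r)/\nu(g_{-kt}V_r g_{kt})$,
with no constant drift. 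Only at the very end are Bowen $(tk,r)$-boxes converted to balls (Lemma~\ref{coveringballs}), and this incurs a multiplicative constant exactly once, not $k$ times. Your change-of-variables idea for invoking (EEP) at a single time scale $t$, and your observation that $y_j := g_{(k-1)t}h_jx$ stays in $\partial_r U^c$ so the injectivity-radius requirement propagates, are both sound and parallel what the paper does (cf.\ the induction via $E_{V_r}(t,k,x)$); it is specifically the covering geometry by balls rather than tessellation tiles that fails.
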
}

{It is not hard to see a connection between the above theorem and Theorem \ref{Main Theorem1}: indeed,
for any $x\in X$ the intersection of the set in the left hand side of \equ{lhs} with
$B^P\big(\frac{r}{16 \sqrt{L}}\big )$ is contained in ${A}^P\Big(t,{\frac{r}{16 \sqrt{L}}},{U^c},{k},x\Big)$  for any $t > 0$ and any $k\in\N$. Thus the covering constructed in Theorem \ref{main cov} can be used to estimate the \hd\ of the intersection of the set $ \pi_x^{-1}\big(E(F^+,U)\big)$ with $P$ from above.}

\begin{proof}[Proof of Theorem \ref{Main Theorem1} assuming Theorem \ref{main cov}]
{First note that the statement of Theorem \ref{Main Theorem1} involves just the semigroup $F^+$ as a whole and does not depend on its parametrization. Thus,}
{applying a linear time change to the flow $g_t$, without {loss of}  generality {for the proof of the theorem} we can assume that}
{${\lambda_{\max} = 1}$}.

Let {$0<r<r_0$}. {We are again going to use the notation $E_{x,s}$ introduced in  \equ{reduced set}.} {{In view of the countable stability of Hausdorff dimension it suffices to  {find $s > 0$ such}   that for any  $x \in X$,
\eq{reduced}{
\dim {\left(E_{x,s} \cap P \right)}  \le  \dim X - C' \frac{{\mu ({\sigma _{r}}
{U})}}{{\log  \frac{1}{r} + \log \frac{1}{{\mu ({\sigma _{r}}{U})}}}}\ }
with the constant $C'>0$  only {dependent on $X$ and $F$}}.}

Note that  {$E_{x,r/2} \cap P=\varnothing$}  for any $x \,{\notin \partial_{r}U^c}$,  so in this case \equ{reduced} is clearly satisfied for $s=r/2$. So, let $x \in \partial_{r}U^c$ {and take} 
$s=\frac{r}{16 \sqrt{L}}$. 

 Let ${\underline {\dim } _B}$ denote the lower box dimension. { Since  for any $t > 0$ we have
 $${E_{x,\frac{r}{16 \sqrt{L}}} \cap P} \subset {\bigcap_{k \in \N}}{A}^P\Big(t,\frac{r}{16 \sqrt{L}},U^c,{k},x\Big),$$ 
from Theorem \ref{main cov},  in view of the assumption {${\lambda_{\max} = 1}$},
it follows that  
\eq{es in}
{\begin{aligned}
 \underline {{{\dim }}}_ B  {\left(E_{x,\frac{r}{16 \sqrt{L}}} \cap P \right)} &\le 
 \mathop {\liminf }\limits_{k \to \infty }   \frac{\log \left(K_0{e^{Lkt}} \left(1 - K_1 \mu (\sigma_rU) +\frac{K_2 e^{- \lambda_1 t}}{r^L}  \right)^k\right)}{-\log (re^{- kt})}\\
   =  \mathop {\liminf }\limits_{k \to \infty } \ 
  & \frac{\log K_0 + Lkt + k \log \left(1 - K_1 \mu (\sigma_rU) +\frac{K_2 e^{- \lambda_1 t}}{r^L}  \right)}{-\log  r + kt}\\
& = L + \frac{\log \left(1 - K_1 \mu (\sigma_rU) +\frac{K_2 e^{- \lambda_1 t}}{r^L}\right)}{t}
    \end{aligned}}
    whenever $t$ satisfies 
{\equ{t estimate}.}}
 {It remains to choose an optimal $t$.
Take $q $ to be} a natural number which satisfies the following conditions: 
\eq{optimization}{
\begin{gathered}
{(\tfrac{1}{8})}^q< \frac{K_1}{2K_2},  \hfill \\
q> \lambda_1   b -L,
\end{gathered} }
and set $$t = a + \frac{{L + q}}{{\lambda_1}}\log
\frac{1}{{r\mu ({\sigma _{r}}U)}}
.$$ 
It is easy to see that in view of \equ{optimization},   $t$ {as above} satisfies \equ{t estimate}, and we have
\eq{optimal1}
{
\begin{split}
\frac{{K_2} e^{-\lambda_1 t}}{r^L} 
 & =  {K_2} {r^{ - L}}{e^{ - \lambda_1 (a+ \frac{{L + q}}{{\lambda_1}}\log \frac{1}{{r\mu ({\sigma _{r}}U)}})}}\\
 & =  e^{-\lambda_1 a} {K_2} {{r^{ - L}}}{{r}^{L + q}} {\mu {({{\sigma _{r}}{U})}}}^{L + q} 
 =  e^{-\lambda_1 a} {K_2} \cdot r^q \cdot {\mu {({{\sigma _{r}}{U})}}}^{L + q} \\
 & <  e^{-\lambda_1 a} {K_2} \cdot {(\tfrac{1}{8})^q} \cdot \mu {({{\sigma _{r}}{U})}} 
  < e^{-\lambda_1 a} {K_2}  \frac{K_1}{{2K_2}} \cdot \mu {({{\sigma _{r}}{U})}} \le \frac{K_1}{2}\mu ({{\sigma _{r}}{U}}). 
\end{split} 
}
Combining \equ{es in} and \equ{optimal1},  we have:  
\[\begin{split} 
\dim  {\left( E_{x,\frac{r}{16 \sqrt{L}}} \cap P \right)} 
& \le L + \frac{{\log \left(1 - \frac{K_1}{2}\mu ({\sigma _{r}}U)\right)}}{{{}t}} 
 \le L - \frac{{\frac{K_1}{2}\mu ({\sigma _{r}}U)}}{{{}t}} \\
& = L - \frac{{\frac{K_1}{2} \cdot \mu ({\sigma _{r}}U)}}{{{}  \frac{{(L + q)}}{{\lambda_1}}\cdot \log \frac{1}{{r\mu ({\sigma _{r}}U)}}}} 
 = L - C' \cdot \frac{{\mu ({\sigma _{r}}U)}}{{\log \frac{1}{{r}} + \log \frac{1}{{\mu ({\sigma _{r}}U)}}}} ,
\end{split}
\]
where $C' = \frac{{K_1 \lambda_1}}{{2{} (L + q)}} $. 
 This finishes the proof.
\end{proof}


\section{A measure estimate} \label{main Section0}
{Our goal in this section is to prove the following proposition which gives a lower bound for the measure of sets 
\eq{ap}{{A}^P\Big(t,{\frac{r}{16 \sqrt{L}}},\sigma _{r/2}{U},1,x \Big)  =  \left \{  {h} \in
B^P\Big(\frac{r}{16 \sqrt{L}}\Big ) :{g_{ t}}{h}x \in \sigma _{r/2}{U}    \right\}}
 whenever $t$ satisfies 
{\equ{t estimate},} and $x$ belongs to ${\partial _{r}}{U^c}$.}
\begin{prop} \label{exponential mixing} {Let ${F^+}$ be a one-parameter {$\Ad$-}diagonalizable
{sub}semigroup of $G$, and $P$ a subgroup of $G$ with property {\rm (EEP)}. Then there exist positive constants $a,b,E', \lambda '$ such that 
for any  $U\subset X$ such that $U^c$ is compact, 
any $x\in \partial_{
r}{U^c}$,  
any $0<r<r_0$ where $r_0$ is as in 
\equ{r0}, and any $t$ satisfying 
\equ{t estimate} one has}
{\eq{conclusion}{ {\mathop {\inf }\limits_{x \in {\partial _{r}}{U^c}} \nu \left({A}^P\Big(t,{\frac{r}{16 \sqrt{L}}},{\sigma _{r/2}}{U},{1},x \Big)\right)\ge \nu\left(B^P\Big(\frac{r}{16 \sqrt{L}} \Big)\right)\mu ({\sigma _{r}}U)  - E'{e^{ - \lambda 't}}.}}}
\end{prop}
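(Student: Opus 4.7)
The plan is to apply the Effective Equidistribution Property (EEP) from Definition \ref{subgroup} to smooth nonnegative test functions $f$ on $P$ and $\psi$ on $X$ approximating from below the indicator functions of $B^P(r_1)$ (writing $r_1 := r/(16\sqrt L)$) and $\sigma_{r/2}U$, respectively. Once $0 \le f \le \mathbf 1_{B^P(r_1)}$ and $0 \le \psi \le \mathbf 1_{\sigma_{r/2}U}$ are in place, nonnegativity gives
\[
\nu\!\left(A^P(t, r_1, \sigma_{r/2}U, 1, x)\right) \;=\; \int_P \mathbf 1_{B^P(r_1)}(h)\,\mathbf 1_{\sigma_{r/2}U}(g_t h x)\,d\nu(h) \;\ge\; I_{f,\psi}(g_t, x),
\]
so a lower bound for $I_{f,\psi}$ via EEP will immediately deliver \equ{conclusion}.

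For $\psi$, I would mollify $\mathbf 1_{\sigma_{3r/4}U}$ by (right) convolution on $G$ with a nonnegative mollifier $\phi^G_{r/4}$ from Lemma \ref{estimateKM} supported in $B^G(r/4)$ of unit $L^1$-mass; since $\mathbf 1_{\sigma_{3r/4}U}$ is right-$\Gamma$-invariant the result descends to $X$, and straightforward distance bookkeeping (perturbations of size at most $r/4$) gives $\mathbf 1_{\sigma_r U} \le \psi \le \mathbf 1_{\sigma_{r/2}U}$. Hence $\int_X \psi\,d\mu = \mu(\sigma_{3r/4}U) \ge \mu(\sigma_r U)$ by $G$-invariance of $\mu$, and Young's inequality combined with part (4) of Lemma \ref{estimateKM} at $p=1$ yields $\|\psi\|_{\ell,2} \ll r^{-\ell}$ and $\|\psi\|_{C^1} \ll r^{-1}$. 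For $f$, with a free parameter $\epsilon_1 \in (0, r_1/2)$ to be chosen, I would set $f := \phi^P_{\epsilon_1} * \mathbf 1_{B^P(r_1 - 2\epsilon_1)}$ using a $P$-mollifier from the same lemma; right-invariance of $\dist$ gives $\supp f \subset B^P(r_1) \subset B^P(1)$ and $\int_P f\,d\nu = \nu(B^P(r_1 - 2\epsilon_1))$, while moving derivatives onto the mollifier together with \equ{Ball measure} yields $\|f\|_{C^\ell} \ll \epsilon_1^{-(\ell + L)} r^L$, and \equ{Ball measure derivative} gives the shortfall $\nu(B^P(r_1)) - \nu(B^P(r_1 - 2\epsilon_1)) \ll \epsilon_1 r^{L-1}$.

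Since $x \in \partial_r U^c \subset \partial_{1/2}U^c$ and $r < r_0(\partial_{1/2}U^c)$, we have $r_0(x) \ge r$, so the EEP hypothesis \equ{conditionont} reduces to $t \gg \log(1/r)$, which is absorbed by \equ{t estimate} once $a,b$ there are chosen large enough. Applying EEP and combining the resulting bound with the shortfall yields
\[
\nu(A^P) \;\ge\; \nu(B^P(r_1))\,\mu(\sigma_r U) \;-\; C_1\,\epsilon_1\,r^{L-1} \;-\; C_2\,\epsilon_1^{-(\ell + L)}\,r^{L - \ell}\,e^{-\lambda t}.
\]
To conclude I would set $\epsilon_1 = e^{-\gamma t}$ with $\gamma := \lambda/(2(\ell + L))$, making the second error term decay like $r^{L-\ell} e^{-\lambda t/2}$ while the first decays like $r^{L-1}e^{-\gamma t}$; enlarging $b$ in \equ{t estimate} to at least $4\max(\ell-L,0)/\lambda$ forces $r^{L-\ell} \le e^{(\lambda/4) t}$, collapsing both errors into a single $E' e^{-\lambda' t}$ with $\lambda' := \min(\gamma, \lambda/4) > 0$.

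The main obstacle is this scale juggling: $\epsilon_1$ must be small enough that the EEP main term dominates the mollification shortfall $\epsilon_1 r^{L-1}$, yet large enough that the blow-up $\epsilon_1^{-(\ell+L)}$ of $\|f\|_{C^\ell}$ does not swamp the EEP exponential decay $e^{-\lambda t}$. The choice $\epsilon_1 = e^{-\gamma t}$ together with the hypothesis \equ{t estimate} is exactly what reconciles these competing requirements; verifying this reconciliation through careful accounting of the constants $\lambda,\ell,L$ and the $r$-powers is essentially the whole content of the proof.
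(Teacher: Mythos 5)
Your argument is correct and reaches the same conclusion, but it takes a genuinely different route than the paper in two respects. First, where you under-approximate both indicator functions (of $B^P(r_1)$ and of $\sigma_{r/2}U$) and then invoke plain nonnegativity to get $\nu(A^P) \ge I_{f,\psi}$, the paper instead keeps $f = \mathbf 1_{B^P(r_1)}$ unmollified, \emph{over}-approximates it by a smooth $f_\varepsilon$ (via its Lemma \ref{ball estimate}, so $f_\varepsilon = 1$ on $B^P(r_1)$ and vanishes off $B^P(r_1+\varepsilon)$), and recovers a lower bound by subtracting the overshoot $\int|f_\varepsilon - f|\,d\nu \le \nu(B^P(r_1+\varepsilon)\ssm B^P(r_1))$, controlled by \equ{Ball measure derivative}; this produces the main term $\nu(B^P(r_1))\mu(\sigma_r U)$ outright instead of via the shortfall $\nu(B^P(r_1)) - \nu(B^P(r_1-2\epsilon_1))$. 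Second, you use two distinct mollification scales, a fixed scale $\sim r$ for $\psi$ (so $\|\psi\|_{\ell,2}, \|\psi\|_{C^1} \ll r^{-\ell}$) and $\epsilon_1 = e^{-\gamma t}$ for $f$, whereas the paper ties both approximations to a single scale $\varepsilon = e^{-\lambda' t}$: with Lemma \ref{estimate} at $\varepsilon_0 = r/2$ the resulting $\psi_\varepsilon$ satisfies $\|\psi_\varepsilon\|_{\ell,2}, \|\psi_\varepsilon\|_{C^\ell} \ll \varepsilon^{-\ell} = e^{\ell\lambda' t}$, and the EEP error becomes the clean $e^{(2\ell\lambda'-\lambda)t}$, handled by the condition $\lambda - 2\ell\lambda' > \lambda'$ in \equ{exponent} without any explicit $r$-powers surviving. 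Your approach is arguably more transparent in avoiding the overshoot-and-correct step, but it leaves an explicit $r^{-\ell}$ (and in your looser $\|f\|_{C^\ell}$ bound, an $r^{L-\ell}$) in the error, which you must then absorb by enlarging $b$ in \equ{t estimate}; the paper's uniform exponential scale avoids that bookkeeping entirely. One small note: your estimate $\|f\|_{C^\ell} \ll \epsilon_1^{-(\ell+L)} r^L$ is unnecessarily weak; moving all derivatives onto the mollifier and bounding $\|D^\alpha\phi^P_{\epsilon_1}\|_1$ via Lemma \ref{estimateKM}(4) at $p=1$ gives $\|f\|_{C^\ell} \ll \epsilon_1^{-\ell}$, which simplifies the scale juggling (though your bound still closes the argument). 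Also, your construction of $\psi$ as a mollification of $\mathbf 1_{\sigma_{3r/4}U}$ at scale $r/4$, squeezed between $\mathbf 1_{\sigma_r U}$ and $\mathbf 1_{\sigma_{r/2}U}$, is a clean replacement for the paper's route through inner regularity (Lemma \ref{estimate} extracts a compact $A_\varepsilon \subset \sigma_\varepsilon O$ and mollifies its $\varepsilon/4$-neighborhood); your variant trades the ratio $\delta = \mu(\sigma_r U)/\mu(\sigma_{r/2}U)$ bookkeeping for the direct inequality $\int\psi = \mu(\sigma_{3r/4}U) \ge \mu(\sigma_r U)$, which is a mild simplification.
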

\ignore{It is easy to see that 
${{{\partial _r}U^c} \subset (\sigma _rU)^c}$ for any $r>0$ and any $U$;
therefore, in order to prove Proposition \ref{exponential mixing} it suffices to get a lower bound for
the measure of the   set 
\eq{complement}{
{\{ p \in {B^{{P}}}(r):{g_t}px \in {\sigma _{r/2}}U\}.}}
which is included in the complement of ${A}^P(t,r,{\partial _{r/2}}U^c,x)$   in ${B^P}(r)$:}
To prove 
\equ{conclusion}  we will apply (EEP) to smooth approximations of
{$1_{B^P({\frac{r}{16 \sqrt{L}}})}$ and ${1_{{\sigma _{r/2}}U}}$}.
In order to extract useful information from (EEP) we will need to bound the {norms of the derivatives} of those approximations.
The next two  lemmas will be used to approximate {${1_{{{\sigma _{r/2}U}}}}$}  and
{$1_{B^P({\frac{r}{16 \sqrt{L}}})}$} respectively.

\begin{lem}
\label{estimate} 
Let $O$ be a nonempty open subset of $X$,
and  let $0<\varepsilon_0 <1$, ${\delta}<1$
be such that 
\eq{inner}{{\delta}\mu (O) \le \mu ({\sigma _{\varepsilon_0} 
}O) < \mu (O).}
Then for any $0 < \varepsilon \le {\varepsilon
_0
}$ one can find {a nonnegative function $\psi_\varepsilon \in C_{comp}^\infty (X)$} such that: 
\begin{enumerate}
\item ${\psi _\varepsilon } \le {1_O}$;
\item ${\delta}\mu (O) \le \int_X {{\psi _\varepsilon }\,d\mu}$; 
{ \item
${\left\| {{\psi _\varepsilon }} \right\|_{\ell{,2}} } \le {4^\ell}{M_\ell }{\varepsilon ^{ - \ell}}$;
\item
${\left\| {{\psi _\varepsilon }} \right\|_{C^\ell} } \le {4^\ell}{M_\ell }{\varepsilon ^{ - \ell}}$,
}
\end{enumerate}
{where $M_\ell$ is as in Lemma \ref{estimateKM}}.
\end{lem}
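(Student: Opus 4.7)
The plan is to construct $\psi_\varepsilon$ by convolving, on $G$, a narrow bump function of width $\varepsilon/4$ supplied by Lemma~\ref{estimateKM} with the characteristic function of the slightly deeper inner core $\sigma_{\varepsilon/4}O$. The width $\varepsilon/4$ is chosen so that (a) the sum of the bump radius and the shrinkage keeps the support inside $O$ with room to spare, and (b) the derivative bounds match the stated constant $4^\ell$. Concretely, let $\varphi$ be the function supplied by Lemma~\ref{estimateKM} with $\varepsilon/4$ in place of $\varepsilon$, let $\mu^G$ be the Haar measure on $G$ locally projecting to $\mu$, and set
$$\psi_\varepsilon(x) \df \int_G \varphi(g)\,1_{\sigma_{\varepsilon/4}O}(g^{-1}x)\,d\mu^G(g).$$

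First I would verify properties (1) and (2). For (1), if $\psi_\varepsilon(x)>0$ then some $g\in B(\varepsilon/4)$ satisfies $g^{-1}x\in\sigma_{\varepsilon/4}O$; since the metric on $X$ comes from a right-invariant Riemannian metric on $G$, one has $\dist(g^{-1}x,x)<\varepsilon/4$, forcing $\dist(x,O^c)>0$ and hence $x\in O$. The bound $0\le\psi_\varepsilon\le 1$ is then immediate from $\|\varphi\|_1=1$. For (2), Fubini's theorem combined with the $G$-invariance of $\mu$ gives
$$\int_X \psi_\varepsilon\,d\mu \,=\, \int_G \varphi(g)\,\mu(\sigma_{\varepsilon/4}O)\,d\mu^G(g) \,=\, \mu(\sigma_{\varepsilon/4}O) \,\ge\, \mu(\sigma_{\varepsilon_0}O) \,\ge\, \delta\mu(O),$$
where the first inequality uses $\varepsilon/4\le\varepsilon_0$, so that $\sigma_{\varepsilon/4}O\supseteq\sigma_{\varepsilon_0}O$.

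For the derivative bounds (3) and (4), I would differentiate $\psi_\varepsilon$ under the integral sign in the $x$-variable; a standard change-of-variables argument on $G$ transfers each Lie-algebra derivative of $\psi_\varepsilon$ into a derivative of $\varphi$ in the $G$-variable, so that for any monomial $D^\alpha$ of order $|\alpha|\le \ell$,
$$|D^\alpha\psi_\varepsilon(x)| \,\le\, \|D^\alpha \varphi\|_1 \,\le\, M_\ell\,(\varepsilon/4)^{-\ell} \,=\, 4^\ell M_\ell\,\varepsilon^{-\ell},$$
the last step applying part (4) of Lemma~\ref{estimateKM} with $p=1$ (where the factor $\frac{p-1}{p}N$ vanishes). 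This yields (4); property (3) then follows from the pointwise bound together with $\mu(X)=1$, giving $\|D^\alpha\psi_\varepsilon\|_2 \le \|D^\alpha\psi_\varepsilon\|_\infty \le 4^\ell M_\ell\,\varepsilon^{-\ell}$ for each $\alpha$ with $|\alpha|\le \ell$.

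The main delicate point is setting up the convolution correctly on the homogeneous space $X$ and verifying that differentiation in the $x$-variable corresponds, after the change of variables on $G$, to a derivative of $\varphi$ of the same order along the chosen basis of $\mathfrak{g}$, so that the bound from Lemma~\ref{estimateKM} transfers without any deterioration of the constant. Once this is in hand, everything else is a routine application of Fubini's theorem and $G$-invariance of $\mu$.
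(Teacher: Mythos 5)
Your construction is structurally the same as the paper's — mollify an indicator function by a bump of width $\varepsilon/4$ from Lemma~\ref{estimateKM} — and your verification of properties (2), (3), (4) is sound: Fubini plus $G$-invariance gives $\int_X\psi_\varepsilon\,d\mu = \mu(\sigma_{\varepsilon/4}O)\ge\mu(\sigma_{\varepsilon_0}O)\ge\delta\mu(O)$, the derivative bounds follow from $\|D^\alpha\varphi_{\varepsilon/4}\|_1\le M_\ell(\varepsilon/4)^{-\ell}$, and the $L^2$ bound follows since $\mu$ is a probability measure.

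However, there is a genuine gap: you convolve with $1_{\sigma_{\varepsilon/4}O}$ directly, whereas the lemma requires $\psi_\varepsilon\in C^\infty_{comp}(X)$. When $X$ is noncompact and $O$ is not relatively compact, the set $\sigma_{\varepsilon/4}O$ need not be relatively compact, so the support of your $\psi_\varepsilon$ — contained in $\partial_{\varepsilon/4}(\sigma_{\varepsilon/4}O)$ — need not be compact either. This is not a hypothetical edge case: in the actual application in Proposition~\ref{exponential mixing}, one takes $O = \sigma_{r/2}U$ where $U^c$ is compact, so $O$ occupies all of $X$ outside a compact set and your $\psi_\varepsilon$ would fail to have compact support. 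The paper avoids this by first invoking inner regularity of $\mu$ to choose a \emph{compact} set $A_\varepsilon\subset\sigma_\varepsilon O$ with $\mu(A_\varepsilon)\ge\delta\mu(O)$, and then mollifying $1_{A_\varepsilon^+}$ (the closed $\varepsilon/4$-neighborhood of $A_\varepsilon$), so that the support lands in the compact set $A_\varepsilon^{++}\subset O$. You should insert this inner-regularity step before the convolution; the rest of your argument then goes through unchanged (with $\psi_\varepsilon=1$ on $A_\varepsilon$ replacing the exact computation of $\int_X\psi_\varepsilon\,d\mu$).
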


\begin{proof}
Let $O$ be a nonempty open subset of $X$, and let {$0< \varepsilon_0 <1$} and ${\delta}<1$
be such that 
\equ{inner} holds. 
Since $O$ is open and the function $x\mapsto \dist(x,O^c)$ is continuous,   for
any $0<\varepsilon  < {\varepsilon _0}$ we have:
\[{\delta}\mu (O) < \mu ({\sigma _\varepsilon }O) < \mu (O).\] By the inner regularity of $\mu$
  we can find a compact subset ${A_\varepsilon } \subset {\sigma _\varepsilon }O$ such that:
\[{\delta}\mu (O) \le \mu ({A_{{\varepsilon}}}) \le \mu ({\sigma _{{\varepsilon}}}O) < \mu (O).\]
{ }
Denote by $A_\varepsilon ^ + ,A_\varepsilon ^{ +  + }$  the closed
$\frac{{{\varepsilon}}}{4}$ and {$\frac{{{\varepsilon}}}{2}$}
neighborhoods of ${A_\varepsilon }$.  Since $A_{{\varepsilon}}$ is
compact, these sets are compact as well.
{Now 
take ${\psi _\varepsilon } =
\varphi_{{\vre/4}}*{1_{A_\varepsilon ^ + }}$, where $\varphi_{{\vre/4}}$ is as in Lemma \ref{estimateKM}. {Since}t $\varphi_{{\vre/4}}$ is supported on 
$B^G(\varepsilon/4)$, the support of the function
$\psi_\varepsilon$ is contained in $A_\varepsilon ^{ +  + } \subset
O$, }
so property $(1)$ holds. Furthermore, $\psi_\varepsilon=1$ on
$A_\varepsilon$, therefore:
\[\mu (O) \ge \int_X {{\psi _\varepsilon }\,d\mu  \ge \mu ({A_\varepsilon }) \ge {\delta}\mu (O)} ,\]
which gives us  property (2). 
{Let $\alpha= (\alpha_1,\dots,\alpha_N) $ {be} such that $\left| \alpha  \right| \le \ell$. For any $x \in X$ we have 
}
{ 
\begin{equation*}{\begin{aligned}
 \left| {{D^\alpha }{\psi _\varepsilon }(x)} \right| &
 = \left| {{D^\alpha }( \varphi_{\vre/4}  * {1_{A_\varepsilon ^ + }} )(x)} \right| =  \left| {{D^\alpha }\varphi_{\vre/4}  * {1_{A_\varepsilon ^ + }} (x)} \right| 
 \\
  &
   \le {\left\| D^ \alpha{{\varphi_{{\vre/4}}}} \right\|}_1  \le {\left\| {{\varphi_{{\vre/4}}}} \right\|}_{\ell,1} \le M_{\ell}  {(\tfrac\varepsilon{4}) ^{ - \ell}}   ,
\end{aligned} }
\end{equation*}
and {likewise},  
by Young's inequality, 
$${{\left\| D^ \alpha{{\psi _\varepsilon }} \right\|}_2 \le \|{D^\alpha }\varphi_{\vre/4}  * {1_{A_\varepsilon ^ + }} \|_2 \le {\left\| D^ \alpha{{\varphi_{{\vre/4}}}} \right\|}_1 \cdot {\left\| {{1_{A_\varepsilon ^ + }}} \right\|}_2 \le  {\left\| D^ \alpha{{\varphi_{{\vre/4}}}} \right\|}_1 \le M_{\ell}  {(\tfrac\varepsilon{4}) ^{ - \ell}},}
$$
which implies (3) and (4).}
\end{proof}
Similarly to the proof of the above lemma, one can get the smooth estimations for  characteristic
functions of small balls in $P$ (we omit the proof for brevity):
\begin{lem}
\label{ball estimate}
{For any $\ell \in \Z_+$ there exist constants
$M'_\ell> 0$  (depending only on $P$)}
such that the following holds:
for any $\varepsilon ,r
> 0$   there exist functions ${f_\varepsilon }:P \to [0,1]$ such
that
\begin{enumerate}
\item ${f_\varepsilon } = 1\,\,on\,\,\,{B^P}(r)$;
\item ${f_\varepsilon } = 0\,\,on\,\,\,{\big({B^P}(r + \varepsilon
)\big)^c}$;
{\item ${\left\| {{f_\varepsilon }} \right\|_{\ell{,2}}} \le {M'_{\ell}}{\varepsilon ^{  -\ell}}$;
\item ${\left\| {{f_\varepsilon }} \right\|_{C^\ell}} \le {M'_{\ell}}{\varepsilon ^{ - 
\ell}}$. }
\end{enumerate}
\end{lem}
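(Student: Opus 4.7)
The plan is to follow the convolution strategy from the proof of Lemma~\ref{estimate}, now carried out directly on the group $P$ and with the target set being a geodesic ball rather than a general open subset of $X$.

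I would first apply Lemma~\ref{estimateKM}---which is stated for an arbitrary Lie group---with $P$ in place of $G$, obtaining for each $\varepsilon>0$ a nonnegative smooth mollifier $\varphi^{P}_{\varepsilon/2}$ on $P$ supported in $B^{P}(\varepsilon/2)$, with $\int_P \varphi^{P}_{\varepsilon/2}\,d\nu = 1$ and the standard $C^\ell$- and $\|\cdot\|_{\ell,1}$-bounds. Then I would set
\[
f_\varepsilon \, := \, \varphi^{P}_{\varepsilon/2} * 1_{B^{P}(r+\varepsilon/2)},
\]
with convolution taken on $P$. Properties (1), (2) and the inclusion $0\le f_\varepsilon\le 1$ are immediate from the support of $\varphi^{P}_{\varepsilon/2}$ together with the normalization $\int_P\varphi^{P}_{\varepsilon/2}\,d\nu=1$: for $h\in B^{P}(r)$, every $g\in \supp \varphi^{P}_{\varepsilon/2}$ satisfies $g^{-1}h\in B^{P}(r+\varepsilon/2)$, so the integrand equals $\varphi^{P}_{\varepsilon/2}(g)$ and integrates to $1$; while for $h\notin B^{P}(r+\varepsilon)$ the integrand vanishes identically.

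For (3) and (4), I would differentiate under the convolution, writing $D^\alpha f_\varepsilon = (D^\alpha \varphi^{P}_{\varepsilon/2}) * 1_{B^{P}(r+\varepsilon/2)}$, and apply Young's inequality exactly as in the proof of Lemma~\ref{estimate}; this reduces both bounds to the estimate $\|\varphi^{P}_{\varepsilon/2}\|_{\ell,1}\ll\varepsilon^{-\ell}$ coming from Lemma~\ref{estimateKM}(4) with $p=1$. The only subtlety relative to Lemma~\ref{estimate} is that $P$ is typically non-compact, so the $L^2$ estimate acquires a factor $\|1_{B^{P}(r+\varepsilon/2)}\|_2=\sqrt{\nu(B^{P}(r+\varepsilon/2))}$ that is no longer automatically bounded by $1$; however, in the range $r<r''$ in which the lemma is applied this factor is uniformly bounded via \equ{Ball measure}, and can therefore be absorbed into the constant $M'_\ell$. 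This volume factor is the only place where any care is required; everything else is routine.
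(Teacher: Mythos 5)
Your construction is exactly what the paper intends: the authors state that the proof is ``similar to the proof of the above lemma'' and omit it, and convolving the indicator of $B^{P}(r+\varepsilon/2)$ with the $P$-mollifier from Lemma~\ref{estimateKM} gives all four properties via the same support computation and Young-inequality argument used for Lemma~\ref{estimate}. You are also right to flag the $L^2$ subtlety: property (3) really does pick up the factor $\sqrt{\nu\big(B^{P}(r+\varepsilon/2)\big)}$, so the lemma as written (with no restriction on $r$) is a slight overstatement when $P$ is non-compact, but in the single place it is invoked one has $r/(16\sqrt{L})$ in place of $r$ with $r<r''$ and $\varepsilon<r/2$, so \equ{Ball measure} bounds this factor uniformly and it can indeed be absorbed into $M'_\ell$.
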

\begin{proof}[Proof of Proposition \ref{exponential mixing}] 
{Let $\ell$ and $\lambda$ be as in  Definition \ref{subgroup}, and let $a,b{,E_1}$ be the implicit constants in \equ{conditionont} and \equ{eep} such that 
$t>a +b \log \frac{1}{r_0(x)}$
implies
\eq{eepnew}{\left| {{I_{f,\psi }}({g_t},x) - \int_P f\,d\nu {\mkern 1mu} \int_X \psi\,d\mu  {\mkern 1mu} } \right| \le E_1\ {\max (
{\left\| \psi  \right\|_{C^1}, \left\| \psi  \right\|_{\ell{,2}} }
)} \cdot {\left\| f \right\|_{{C^\ell }}} \cdot {e^{ - \lambda t}}{\mkern 1mu}} for any $f$ and $\psi$ as in Definition \ref{subgroup}. Then choose $\lambda'>0$ such that 
{\eq{exponent}{\lambda  - { {2\ell}}\lambda ' > \lambda '\quad \text{ and }\quad1/\lambda' > b. }}}
{Now let  $U\subset X$ be such that $U^c$ is compact, and take $0<r<r_0$ and $x \in \partial_{r}U^c$}. 
If $\mu(\sigma_{r}U) = 0$, \equ{conclusion} is trivially satisfied; thus let us assume that $\mu(\sigma_{r}U) > 0$. 
Then 
put $$O := \sigma_{r/2} U$$ and take 
$$\delta :=  \frac{{\mu ({\sigma _{r}}U)}}{{\mu({\sigma _{r/2}}U)}}.$$ 
Note that \equ{inner} holds with $\varepsilon_0 = r/2$. 
Also, since $U$ is open, the function $x\mapsto \dist(x,U^c)$ is continuous,  which implies that $\delta < 1$. 

{Now} 
set $f = {1_{B^P({\frac{r}{16 \sqrt{L}}})}}$ 
 {and take  \eq{t estimate 1}{t \ge a+ \frac{1}{{\lambda '}}\log \frac{2}{r}> a+b \log \frac{2}{r}> a+b \log \frac{1}{r_0(x)}}
{(the last inequality holds since $x \in \partial_{r}U^c$). Also define}
 $${\varepsilon : = {e^{- \lambda ' t }}}.$$
{Note that $\vre < r/2$ in view of \equ{t estimate 1}}.}
So  let us apply Lemma \ref{estimate} with $\vre _0=r/2$, 
and Lemma \ref{ball
estimate} with 
$\frac{r}{16 \sqrt{L}}$ in place of $r$.  Let ${\psi
_\varepsilon}$ and ${f _\varepsilon}$ be the corresponding functions. Then we have 
\eq{prime}{
\begin{aligned}
\max (\|\psi_{{\varepsilon}}\|_{C^1},{\left\| \psi_{{\varepsilon}}  \right\|_{\ell{,2}} }) \cdot {\left\| f_{{\varepsilon}} \right\|_{{C^\ell }}} \cdot {e^{ - \lambda t}} 
& \le \max (\|\psi_{{\varepsilon}}\|_{C^\ell},{\left\| \psi_{{\varepsilon}}  \right\|_{\ell{,2}} }) \cdot {\left\| f _{{\varepsilon}}\right\|_{{C^\ell }}} \cdot {e^{ - \lambda t}} \\ 
& \le  4^\ell M_{\ell} {\varepsilon ^{ - \ell }}{M'_{\ell}}{\varepsilon ^{ - \ell }} e^{-\lambda t} \\
&  
=  4^\ell M_{\ell} {M'_{\ell}} {e^{2 \ell \lambda '-\lambda t}} 
 \le   4^\ell M_{\ell} {M'_{\ell}} {e^{-\lambda 't}} .
\end{aligned}}
{Note {also} that 
$\supp f_{\vre} \subset B^P(\frac{r}{16 \sqrt{L}}+r/2) \subset B^P(1)$.  In view of \equ{t estimate 1} {and \equ{prime}, 
the estimate \equ{eepnew} can be applied to ${\psi
_\varepsilon}$, ${f _\varepsilon}$, $x$ and $t$, and yields}
}
$${\begin{split}
\int_P {{f_\varepsilon }({h})} \psi_\varepsilon ({g_t}{h}x)\,d\nu ({h}) 
&\ge \int_P {{f_\varepsilon }\,d} \nu \int_X {{\psi _\varepsilon }{\mkern 1mu} \,d\mu }  - {4^\ell M_{\ell} {M'_{\ell}} {{E_1}}}{e^{ - \lambda 't}}.
 \end{split}}$$
{In view of \equ{ap}} we have:
$${\begin{aligned}
\nu \left({A}^P\Big(t,\frac{r}{16 \sqrt{L}},\sigma_{r/2} U,{1},x\Big) \right) 
&= \int_P {f({h}){1_{\sigma_{r/2} U}}({g_t}} {h}x)\,d\nu ({h}) \\ 
\ge \int_P {f({h})\psi_\varepsilon ({g_t}} {h}x)d\nu ({h})  &\ge \int_P {{f_\varepsilon }({h})\psi_\varepsilon ({g_t}} {h}x)\,d\nu ({h}) - \int_{P} {|{f_\varepsilon } - f|\,d\nu}  \\
 \ge \int_P {{f_\varepsilon }({h})\psi_\varepsilon ({g_t}} {h}x)\,d\nu ({h}) &- \nu\left(B^P\Big(\frac{r}{16 \sqrt{L}} + {e^{ - \lambda ' t}}\Big)\ssm B^P\Big(\frac{r}{16 \sqrt{L}}\Big)\right) 
.\end{aligned}
}$$
By the mean-value theorem {and \equ{Ball measure derivative}}, for some $\frac{r}{16 \sqrt{L}}<s<\frac{r}{16 \sqrt{L}}+{e^{ - \lambda 't}}$ it holds that
\begin{equation*}\begin{aligned}\nu \left( {{B^P}\Big(\frac{r}{16 \sqrt{L}} + {e^{ - \lambda 't}}\Big)\ssm {B^P}\Big(\frac{r}{16 \sqrt{L}}\Big)} \right)  = \nu\left(B\Big(\frac{r}{16 \sqrt{L}} + e^{ - \lambda 't}\Big)\right)& -  \nu\left(B\Big(\frac{r}{16 \sqrt{L}}\Big)\right) \\ 
 \le {c_3  {e^{ - \lambda 't}} s^{L-1}}  \le {c_3} {e^{ - \lambda 't}} 
   \left(\frac{r}{16 \sqrt{L}}+\frac{r}{2}\right)^{L-1} 
&\le {c_3}{e^{ - \lambda 't}}. \end{aligned}\end{equation*}
{Combining the above computations, we obtain}
\begin{align*}
\nu &\left({A}^P\Big(t,\frac{r}{16 \sqrt{L}},\sigma_{r/2} U,{1},x\Big)\right)
\ge \int_P {{f_\varepsilon }({h})\psi_\varepsilon ({g_t}} {h}x)\,d\nu ({h}) - {c_3}{e^{ - \lambda 't}} \\
&\ge \int_P {f_\varepsilon }\,d\nu\int_X {\psi_\varepsilon \,d\mu  - {4^\ell M_{\ell} {M'_{\ell}} {{E_1}}} {e^{ - \lambda 't}}}  -   c_3{e^{ - \lambda 't}} \\
&\ge  \nu \left( {{B^P}\Big(\frac{r}{16 \sqrt{L}} \Big)} \right) \frac{{\mu ({\sigma _{r}}U)}}{{\mu
({\sigma _{r/2}}U)}} \cdot \mu ({\sigma _{r/2}}U) - ( {4^\ell M_{\ell} {M'_{\ell}} {{E_1}}} + {c_3}){e^{ - \lambda 't}} \\
&= \nu \left( {{B^P}\Big(\frac{r}{16 \sqrt{L}} \Big)} \right)\mu ({\sigma _{r}}U) - {E'}{e^{ - \lambda 't}} 
\end{align*}
\ignore{
So we have for $r \in
(0,\frac{{{r_0}}}{2})$ and t as \equ{t estimate} :
\[\begin{array}{*{20}{l}}
{\nu ({A}^P(t,r,{{{\sigma _{r/2}}{U}}},x) \le \nu (B^P(r)) - c_1 \mu ({{{\sigma _r}{U}}}){r^L} + (E_1+c_3){e^{ - \lambda 't}}}\\
{{\mkern 1mu} {\mkern 1mu} {\mkern 1mu} {\mkern 1mu} {\mkern 1mu}
{\mkern 1mu} {\mkern 1mu} {\mkern 1mu} {\mkern 1mu} {\mkern 1mu}
{\mkern 1mu} {\mkern 1mu} {\mkern 1mu} {\mkern 1mu} {\mkern 1mu}
{\mkern 1mu} {\mkern 1mu} {\mkern 1mu} {\mkern 1mu} {\mkern 1mu}
{\mkern 1mu} {\mkern 1mu} {\mkern 1mu} {\mkern 1mu} {\mkern 1mu}
{\mkern 1mu} {\mkern 1mu} {\mkern 1mu} {\mkern 1mu} {\mkern 1mu}
{\mkern 1mu} {\mkern 1mu} {\mkern 1mu} {\mkern 1mu} {\mkern 1mu}
{\mkern 1mu} {\mkern 1mu} {\mkern 1mu} {\mkern 1mu} {\mkern 1mu}
{\mkern 1mu} {\mkern 1mu} {\mkern 1mu} {\mkern 1mu} {\mkern 1mu}
{\mkern 1mu} {\mkern 1mu} {\mkern 1mu}  = \nu (B^P(r)) - c_1\mu
({{{\sigma _r}{U}}}){r^L} + {\mkern 1mu} E'{e^{ - \lambda 't}}},
\end{array}\] }
where $E' {:=4^\ell M_{\ell} {M'_{\ell}} {{E_1}}} +{c_3}$.
\end{proof}

\ignore{\section{We don't need it} \label{main Section1}
Let $U$ be as in Theorem \ref{Main Theorem1}  and   $x \in X$. We are looking for an upper bound for the Hausdorff dimension of {the set} $\{ p \in P:px \in E({F^ + },U)\}$.
By the countable stability of Hausdorff dimension it
suffices to find an upper bound for the 
dimension of the
following set: \eq{main} {E_{x,{s}}:= \{ p \in B^P({s}):px \in E({F^ + },U)\},
}
where
{$ {s} > 0$} is sufficiently small.
%
%
Note that for any ${s}, t > 0$ and ${k \in \N}$ one has
$${E_{x,{s}} \subset {A}^P(t,{s},{U^c},{k},x)
.}$$
{The following crucial proposition, which will be proved in the next section, bounds the number of small balls 
needed to cover the set  ${A}^P(t,{\frac{r}{16 \sqrt{L}}},{U^c},{k},x)$ for sufficiently small $r>0$:}

\begin{prop}\label{main estimate} There exist positive constants {$K_0$, $K_1$, $K_2$ and
$\lambda_0$} 
such that 
for any $0<r<r''$, $x\in \partial_{
r}{U^c}$, 
$t>0$ and any natural number $k$, 
the set ${A}^P(t,{\frac{r}{16 \sqrt{L}}},{U^c},{k},x)$ can be covered with at most
$$
K_0{e^{Lkt}} \left(1 + \frac{K_1 e^{-\lambda_0 t}-K_2 \eta(r,t)}{{\nu \left(B^P(\frac{r}{16 \sqrt{L}})\right)}}\right)^k$$
balls in $P$ of radius $re^{-kt}$, where $\eta(r,t)$ is defined for any $r,t>0$ as follows:
\eq{defeta}{\eta(r,t) :=  \mathop {\inf }\limits_{y \in {\partial _{r}}{U^c}} \nu \left({A}^P\Big(t,{\frac{r}{16 \sqrt{L}}},{\sigma _{r/2}}{U},{1},y\Big)\right).} 
\end{prop}

{It is clear that the above proposition implies an estimate for the \hd\ of $E_{x,\frac{r}{16 \sqrt{L}}}$:
\begin{cor}\label{main estimate cor} For any $0<r<r''$, $x\in \partial_{
r}{U^c}
$ and $t>0$, one has $$\dim E_{x,\frac{r}{16 \sqrt{L}}} \le L + \frac1t{\log \left(1 + \frac{K_1 e^{- \lambda_0 t}-K_2 \eta(r,t)}{{\nu \left(B^P(\frac{r}{16 \sqrt{L}})\right)}}\right)}.$$
\end{cor}
\begin{proof} Let ${\underline {\dim } _B}$ denote the lower box dimension. $E_{x,\frac{r}{16 \sqrt{L}}} \subset {\bigcap_{k \in \N}}{A}^P(t,\frac{r}{16 \sqrt{L}},U^c,{k},x)$, so by Proposition \ref{main estimate} we have:
$${\begin{split} 
\underline {{{\dim }}}_ B E_{x,\frac{r}{16 \sqrt{L}}} 
& \le \underline {{{\dim }}}_ B {\bigcap_{k \in \N}}{A}^P(t,\frac{r}{16 \sqrt{L}},U^c,{k},x) 
\\
&\le  \mathop {\liminf }\limits_{k \to \infty } \frac{K_0 \log e^{ktL}}{-\log (re^{-kt})}+\mathop {\liminf }\limits_{k \to \infty } \frac{\log \left(1 + \frac{K_1 e^{- \lambda_0 t}-K_2\eta(r,t)}{{\nu \left(B^P(\frac{r}{16 \sqrt{L}})\right)}}\right)^k}{-\log (re^{-kt})} \\
& = L + \frac1t{\log \left(1 + \frac{K_1 e^{- \lambda_0 t}-K_2 \eta(r,t)}{{\nu \left(B^P(\frac{r}{16 \sqrt{L}})\right)}}\right)}.
 \end{split}}$$
\end{proof}}

{\begin{proof}[Proof of Theorem \ref{main cov} {assuming Proposition \ref{main estimate}}]
Let $0<r<r_0$ and $x \in \partial_rU$. Take $a,b,E', \lambda '$ as in
 Proposition \ref{exponential mixing}. Then one has \eq{ineq1}{ \eta(r,t) \ge \nu\left(B^P\Big(\frac{r}{16 \sqrt{L}}\Big)\right)\mu ({\sigma _{r}}U)  - E'{e^{ - \lambda 't}}}
 {whenever  
 \eq{t estimate}{t> a+ b \log \frac{1}{r} .}}
 It follows from Proposition \ref{main estimate} and \equ{ineq1} that for any natural number $k$, the set ${A}^P(t,{\frac{r}{16 \sqrt{L}}},{U^c},{k},x)$ can be covered with at most 
 $$
K_0{e^{Lkt}} \left(1 - K_2 \mu (\sigma_rU) +\frac{K_3 e^{- \lambda_1 t}}{r^L}  \right)^k$$
balls in $P$ of radius $re^{-kt}$, where {$\lambda_1=\min (\lambda_0,\lambda')$}, $c_1$ is  as in \equ{Ball measure}, and ${K_3}=\frac{(K_1+K_2E')(16 \sqrt{L})^L}{c_1}$.
\end{proof}}
\gr{here we start proving THeorem 1.3. we also can probably remove corollary 5.2}
\begin{proof}[Proof of Theorem \ref{Main Theorem1}]
Let {$0<r<r_0$}. {Our goal is to find a constant $C'> 0$ such that for any $x \in X$ there exists $s>0$ with
\eq{reduced}{{\dim E_{x,s}  \le L - C'} \frac{{\mu ({\sigma _{r}}U)}}{{\log \frac{1}{{r}} + \log \frac{1}{{\mu ({\sigma _{r}}U)}}}},}}

Note that $E_{x,r/2}=\varnothing$  for any $x \,{\notin \partial_{r}U^c}$,  so in this case \equ{reduced} is clearly satisfied for $s=r/2$. So, let $x \in \partial_{r}U^c$ {and take} 
$s=\frac{r}{16 \sqrt{L}}$.\\  Let ${\underline {\dim } _B}$ denote the lower box dimension. $E_{x,\frac{r}{16 \sqrt{L}}} \subset {\bigcap_{k \in \N}}{A}^P(t,\frac{r}{16 \sqrt{L}},U^c,{k},x)$, so by Theorem \ref{main cov} we have: \gr{the inequalities here should be modified}
$${\begin{split} 
\underline {{{\dim }}}_ B E_{x,\frac{r}{16 \sqrt{L}}} 
& \le \underline {{{\dim }}}_ B {\bigcap_{k \in \N}}{A}^P(t,\frac{r}{16 \sqrt{L}},U^c,{k},x) 
\\
&\le  \mathop {\liminf }\limits_{k \to \infty } \frac{K_0 \log e^{ktL}}{-\log (re^{-kt})}+\mathop {\liminf }\limits_{k \to \infty } \frac{\log \left(1 + \frac{K_1 e^{- \lambda_0 t}-K_2\eta(r,t)}{{\nu \left(B^P(\frac{r}{16 \sqrt{L}})\right)}}\right)^k}{-\log (re^{-kt})} \\
& = L + \frac1t{\log \left(1 + \frac{K_1 e^{- \lambda_0 t}-K_2 \eta(r,t)}{{\nu \left(B^P(\frac{r}{16 \sqrt{L}})\right)}}\right)}.
 \end{split}}$$
 {It remains to choose an optimal $t$ satisfying \equ{t estimate}. Take $q $ to be} a natural number which satisfies the following conditions: 
\eq{optimization}{
\begin{gathered}
{(\tfrac{1}{8})}^q< \frac{K_2}{2K_3}  \hfill \\
q> \lambda_1   b -L
\end{gathered} }
and set $$t = a + \frac{{L + q}}{{\lambda_1}}\log
\frac{1}{{r\mu ({\sigma _{r}}U)}}
.$$ 
It is easy to see that in view of \equ{optimization}, $t$ satisfies \equ{t estimate} and we have
\eq{optimal1}
{
\begin{split}
\frac{{K_3} e^{-\lambda_1 t}}{r^L} 
 & =  {K_3} {r^{ - L}}{e^{ - \lambda_1 (a+ \frac{{L + q}}{{\lambda_1}}\log \frac{1}{{r\mu ({\sigma _{r}}U)}})}}\\
 & =  e^{-\lambda_1 a} {K_3} {{r^{ - L}}}{{r}^{L + q}} {\mu {({{\sigma _{r}}{U})}}}^{L + q} 
 =  e^{-\lambda_1 a} {K_3} \cdot r^q \cdot {\mu {({{\sigma _{r}}{U})}}}^{L + q} \\
 & <  e^{-\lambda_1 a} {K_3} \cdot {(\tfrac{1}{8})^q} \cdot \mu {({{\sigma _{r}}{U})}} 
  < e^{-\lambda_1 a} {K_3}  \frac{K_2}{{2K_3}} \cdot \mu {({{\sigma _{r}}{U})}} \le \frac{K_2}{2}\mu ({{\sigma _{r}}{U}}). 
\end{split} 
}
Combining \equ{estimate1} and \equ{optimal1},  we have:  
\[\begin{split} 
\dim   E_{x,\frac{r}{16 \sqrt{L}}}  
& \le L + \frac{{\log \left(1 - \frac{K_2}{2}\mu ({\sigma _{r}}U)\right)}}{{{}t}} 
 \le L - \frac{{\frac{K_2}{4}\mu ({\sigma _{r}}U)}}{{{}t}} \\
& = L - \frac{{\frac{K_2}{4} \cdot \mu ({\sigma _{r}}U)}}{{{}  \frac{{(L + q)}}{{\lambda_1}}\cdot \log \frac{1}{{r\mu ({\sigma _{r}}U)}}}} 
 = L - C' \cdot \frac{{\mu ({\sigma _{r}}U)}}{{\log \frac{1}{{r}} + \log \frac{1}{{\mu ({\sigma _{r}}U)}}}} ,
\end{split}
\]
where $C' = \frac{{K_2 \lambda_1}}{{4{}  (L + q)}} >0$ is some constant dependent on $X$ and $F$.
This finishes the proof.
\end{proof}

}

\ignore{{To prove Theorem \ref{Main Theorem1} it now remains to utilize the estimate given by Proposition~\ref{exponential mixing}}:}
\ignore{\begin{proof}[Proof of Theorem \ref{Main Theorem1} {assuming Proposition \ref{main estimate}}]
\ignore{Also note that there exists $\alpha > 0$, depending only on $F$ and $P$, such that 
\eq{auto}{\nu (g_{ - t }Bg_{t}) = e^{-\alpha t}\nu(B)}
for any $t \in \R$ and any measurable subset $B$ of $P$. So in view of \equ{cover} and \equ{auto} we have that any Bowen $(tk,r/2)$-ball in $P$ can be covered with :
$${C_2}{e^{ - tk(\alpha  - L)}}$$
balls in $P$ of radius $
r{e^{ - tk}}$ where ${C_2} = {C_1}{(\frac{1}{2})^L}\frac{{{c_2}}}{{{c_1}}}$. Hence, by
Proposition \ref{Bowen Balls}, one sees that for any t as \equ{t estimate} the set E_r

 can be covered with
\eq{1.3}{C'_L {e^{ - tk(\alpha  - L)}} \cdot \mathop {\sup }\limits_{x' \in {\partial _{r/2}}{U^c}} {\left( {\frac{{\nu ({A}^P(t,r,{\partial _{r/2}}{U^c},x'))}}{{v({g_{ - t}}{B^P}(r/4){g_t})}}} \right)^k}  }
balls of radius ${r{e^{ - {}tk}}}$.\\

\eq{tk bowen}{{{{K_4}}}\frac{{\nu ({g_{ - tk}}{B^P}(r/2){g_{tk}})}}{{\nu ({B^P}(r/2{e^{ - tk}}))}} = {{{K_4}}}\frac{{{e^{ - tk\alpha }}\nu ({B^P}(r/2))}}{{\nu ({B^P}(r/2{e^{ - tk}}))}} \leqslant {{{K_4}}}\frac{{{c_2}{e^{ - tk\alpha }}{{(\frac{r}{2})}^L}}}{{{c_1}{e^{ - tkL}}{{(\frac{r}{2})}^L}}} = {{{K_4}}} \cdot \frac{{{c_2}}}{{{c_1}}}{e^{ - tk(\alpha  - L)}}}
Similarly, by using \equ{auto} again we have:
\eq{t bowen}{\frac{{\nu {{({g_{ - t}}{B^P}(r/2){g_t})}^k}}}{{\nu {{({B^P}(r/2{e^{ - t}}))}^k}}} \geqslant {(\frac{{{c_1}}}{{{c_2}}})^k}{e^{ - tk(\alpha  - L)}}}
{Then write upper and lower bounds and draw conclusions similar to}
{\[ = C'(L) \cdot \left( {\frac{{\nu {{({g_{ - t}}{B^P}(r/4){g_t})}^k}}}{{\nu {{({B^P}(r{e^{ - {\lambda _0}t}}))}^k}}}} \right) = C'(L) \cdot \left( {\frac{{\nu {{({g_{ - t}}{B^P}(r/4){g_t})}^k}}}{{\nu {{({B^P}(r{e^{ - t}}))}^k}}}} \right)\]
or do something else, as long as there are no mistakes.}}

{\ignore{Take $D>1$ the same way as in Proposition \ref{main estimate} and for a compact subset $K$ of $X$
define $\rho (K):=\min\{r_0(K)/D, 1 \}/2$. Note that: $\rho (K) \le \min \{1/2, r_0/2D\}. $ }} 
Let {$0<r<r_0$}. {Our goal is to find a constant $C'> 0$ such that for any $x \in X$ there exists $s>0$ with
\eq{reduced}{{\dim E_{x,s}  \le L - C'} \frac{{\mu ({\sigma _{r}}U)}}{{\log \frac{1}{{r}} + \log \frac{1}{{\mu ({\sigma _{r}}U)}}}},}}
Note that $E_{x,r/2}=\varnothing$  for any $x \,{\notin \partial_{r}U^c}$,  so in this case \equ{reduced} is clearly satisfied for $s=r/2$. So, let $x \in \partial_{r}U^c$ {and take} 
$s=\frac{r}{16 \sqrt{L}}$.
{Take $a,b,E', \lambda '$ as in
 Proposition \ref{exponential mixing}. Then one has} \eq{ineq1}{ \eta(r,t) \ge \nu\left(B^P\Big(\frac{r}{16 \sqrt{L}}\Big)\right)\mu ({\sigma _{r}}U)  - E'{e^{ - \lambda 't}}}
 {whenever  $x \in \partial_{r}U^c$ 
and 
 \eq{t estimate}{t> a+ b \log \frac{1}{r} .}} Then  
{it would follow from Corollary \ref{main estimate cor}} and \equ{ineq1} that
\eq{estimate1}{
\begin{split}
{\dim\,}{E_{x,\frac{r}{16 \sqrt{L}}}}
 &  \le L + \frac{\log \left(1 + \frac{K_1 e^{- \lambda_0 t}-\nu\big(B^P(\frac{r}{16 \sqrt{L}})\big) K_2 \mu ({\sigma _{r}}U)  + K_2 E'{e^{ - \lambda 't}}}{{\nu (B^P(\frac{r}{16 \sqrt{L}}))}}\right)}{t} \\
 & \le L  + \frac{\log  \left(1- K_2 \mu ({{{\sigma _{r}}{U}}})+\frac{K_1e^{-\lambda_0 t}}{ \nu \left({B^P}(\frac{r}{16 \sqrt{L}})\right)} +   \frac{K_2E'{e^{ - \lambda 't}}}{\nu {({B^P}(\frac{r}{16 \sqrt{L}}))}} \right)      }{t} \\
& \le L  + \frac{\log  \left(1-  K_2 \mu ({{{\sigma _{r}}{U}}})+\frac{K_1e^{-\lambda_1 t}}{ c_1(\frac{r}{16 \sqrt{L}})^L} +  \frac{K_2 E'{e^{ - \lambda_1 t}}}{c_1(\frac{r}{16 \sqrt{L}})^L}   \right)      }{t} \\
& = L  + \frac{\log  \left(1- K_2 \mu ({{{\sigma _{r}}{U}}})+\frac{{K_3} e^{-\lambda_1 t}}{r^L} \right)      }{t} 
\end{split}}
where {$\lambda_1=\min (\lambda_0,\lambda')$}, $c_1$ is  as in \equ{Ball measure} and ${K_3}=\frac{(K_1+K_2E')(16 \sqrt{L})^L}{c_1}$. 
{It remains to choose an optimal $t$ satisfying \equ{t estimate}. Take $q $ to be} a natural number which satisfies the following conditions: 
\eq{optimization}{
\begin{gathered}
{(\tfrac{1}{8})}^q< \frac{K_2}{2K_3}  \hfill \\
q> \lambda_1   b -L
\end{gathered} }
and set $$t = a + \frac{{L + q}}{{\lambda_1}}\log
\frac{1}{{r\mu ({\sigma _{r}}U)}}
.$$ 
It is easy to see that in view of \equ{optimization}, $t$ satisfies \equ{t estimate} and we have
\eq{optimal1}
{
\begin{split}
\frac{{K_3} e^{-\lambda_1 t}}{r^L} 
 & =  {K_3} {r^{ - L}}{e^{ - \lambda_1 (a+ \frac{{L + q}}{{\lambda_1}}\log \frac{1}{{r\mu ({\sigma _{r}}U)}})}}\\
 & =  e^{-\lambda_1 a} {K_3} {{r^{ - L}}}{{r}^{L + q}} {\mu {({{\sigma _{r}}{U})}}}^{L + q} 
 =  e^{-\lambda_1 a} {K_3} \cdot r^q \cdot {\mu {({{\sigma _{r}}{U})}}}^{L + q} \\
 & <  e^{-\lambda_1 a} {K_3} \cdot {(\tfrac{1}{8})^q} \cdot \mu {({{\sigma _{r}}{U})}} 
  < e^{-\lambda_1 a} {K_3}  \frac{K_2}{{2K_3}} \cdot \mu {({{\sigma _{r}}{U})}} \le \frac{K_2}{2}\mu ({{\sigma _{r}}{U}}). 
\end{split} 
}
Combining \equ{estimate1} and \equ{optimal1},  we have:  
\[\begin{split} 
\dim   E_{x,\frac{r}{16 \sqrt{L}}}  
& \le L + \frac{{\log \left(1 - \frac{K_2}{2}\mu ({\sigma _{r}}U)\right)}}{{{}t}} 
 \le L - \frac{{\frac{K_2}{4}\mu ({\sigma _{r}}U)}}{{{}t}} \\
& = L - \frac{{\frac{K_2}{4} \cdot \mu ({\sigma _{r}}U)}}{{{}  \frac{{(L + q)}}{{\lambda_1}}\cdot \log \frac{1}{{r\mu ({\sigma _{r}}U)}}}} 
 = L - C' \cdot \frac{{\mu ({\sigma _{r}}U)}}{{\log \frac{1}{{r}} + \log \frac{1}{{\mu ({\sigma _{r}}U)}}}} ,
\end{split}
\]
where $C' = \frac{{K_2 \lambda_1}}{{4{}  (L + q)}} >0$ is some constant dependent on $X$ and $F$.
This finishes the proof.
\end{proof}}}

\section{Tessellations of $P$ and Bowen boxes: proof of Theorem \ref{main cov}}
{ In order to prove  {Theorem \ref{main cov}} it will be instrumental to use a technique of {\sl tessellations} of nilpotent Lie groups, as developed in \cite{KM1}. It allows one to cover subsets of $P$  with objects that behave like non-overlapping cubes in a Euclidean space.  In this aspect our method differs from the one by Kadyrov \cite{K}: using {\sl Bowen boxes} defined below, as opposed to Bowen balls considered in \cite{K}, turns out to be a   more efficient way to cover $P$ {(see \equ{compwithkadyrov} below and the subsequent footnote for explanation).}
We are going to revisit {{the} construction} {in \cite{KM1}} and then use it to {find efficient coverings of sets of the form} $A^P\left(t,{\frac{r}{16 \sqrt{L}}}, {{U}^c,{k}},x\right)$.}

Let us say that an open subset $V$ of $P$ is a {\sl tessellation domain} for $P$ relative to a countable subset $\Lambda$ of $P$ if 
\begin{itemize}
\item
$\nu (\partial V) = 0.$
\item
$V \gamma_1 \cap V \gamma_2 = \varnothing$ for different $\gamma_1,\gamma_2 \in \Lambda$.
\item
$P = \bigcup\limits_{\gamma  \in \Lambda } {\overline V \gamma }.$
\end{itemize} 

{Note that $P$ is a connected   simply connected nilpotent Lie group. Let $I_P\subset \frak p = \Lie(P)$ be the cube centered at $0$ with side length $1$ with respect to a suitably chosen basis of $\frak p$.  
For any $r  > 0$ let us define {$V_r:=\exp (\frac{r}{4 \sqrt{L}} I_P)$.} Then, as shown in  \cite[Proposition 3.3]{KM1}, $V_r$ is a tessellation domain for $P$ relative to some discrete subset $\Lambda_r$ of $P$.}
{Since the
exponential map is $2$-bi-Lipschitz on {$\frac{r}{4 \sqrt{L}} I_P$} for $r < r''$}, we have
{\eq{Bowen inc}{{B^P}\Big(\frac{r}{{16 \sqrt L }}\Big) \subset {V_r} \subset {B^P}(r/4)}}

{{Also} it is easy to see that there exists {${K_3} > 0$} such that for any ${\delta} \le 1$ \eq{cube d}{\nu \big(\{ h \in P:{\dist}(h,\partial {V_r}) <  {\delta} \} \big)<{{K_3}  {\delta}.}}}

\ignore{Applying a linear time change to the flow $g_t$, without {loss of}  generality we can assume that 
\gr
{\eq{eigen value}
{\lambda_{\max} = 1.}}}
Define
\eq{b1}{\lambda_0 := \min \{ |\lambda |:\,\lambda \text{ is an eigenvalue of }\ad_{{g_1}}|_{\frak p}\} .}
{Again using the  bi-Lipschitz property of $\exp$, we can conclude that}
for any $0<r<r''$ and any $t>0$ one has
\eq{diam}{\diam(g_{-t}V_rg_t)< 2re^{-\lambda_0 t} 
.}

{Let us now define} a {\sl Bowen $(t,r)$-box} in $P$ 
{to be} a set of the form $g_{-t}V_r \gamma g_t$  for some $\gamma \in P$ {and $t > 0$}. 
Also define
$$S_{r,t}:=\{ \gamma  \in \Lambda_r :{g_{ - t}}{V_r\gamma}{g_t}  \cap {V_r} \ne \varnothing \}.$$
Note that $V_r$ can be covered with at most $\#S_{r,t}$ Bowen $(t,r)$-boxes in $P$. 
The following lemma gives   an upper bound for $\#S_{r,t}$: 
\begin{lem}
\label{covering} For any $0<r < r''$ and any {$t>0 $} 
\[\# S_{r,t}  \le \frac{{\nu ({V_r})}}{{\nu ({g_{ - t}}{V_r}{g_t})}}  \left(1 + \frac{{K_3}e^{-\lambda_0 t}}{{\nu ({V_r})}}\right).\]
\end{lem}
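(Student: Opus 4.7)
My plan is to exploit the fact that conjugation by $g_t$ turns the original tessellation of $P$ by translates of $V_r$ into a tessellation by Bowen $(t,r)$-boxes, and then to compare this tessellation against $V_r$ by a volume argument.

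First I would observe that since $F^+$ normalizes $P$, the map $\alpha_t : h \mapsto g_{-t}hg_t$ is a group automorphism of $P$. Applying $\alpha_t$ to the tessellation $\{V_r\gamma : \gamma \in \Lambda_r\}$ yields
\[
g_{-t}V_r\gamma g_t = (g_{-t}V_rg_t)(g_{-t}\gamma g_t),
\]
so the Bowen $(t,r)$-boxes $\{g_{-t}V_r\gamma g_t : \gamma \in \Lambda_r\}$ form a tessellation of $P$ (with lattice $\alpha_t(\Lambda_r)$). Moreover, by right-invariance of $\nu$, each Bowen box has Haar measure equal to $\nu(g_{-t}V_rg_t)$.

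Next I would use the diameter bound \equ{diam}: each Bowen $(t,r)$-box has diameter less than $2re^{-\lambda_0 t}$, and each such box is connected (being the continuous image of a cube under $\exp$ and group translation). Hence if a Bowen box $g_{-t}V_r\gamma g_t$ meets $V_r$ but is not entirely contained in $V_r$, it must meet $\partial V_r$, and then it lies within the open $2re^{-\lambda_0 t}$-neighborhood of $\partial V_r$. Consequently,
\[
\bigcup_{\gamma \in S_{r,t}} g_{-t}V_r\gamma g_t \ \subset\ V_r \ \cup\ \bigl\{h \in P : \dist(h,\partial V_r) < 2re^{-\lambda_0 t}\bigr\}.
\]

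Finally, by disjointness of the Bowen-box tessellation, the measure of the left-hand side is exactly $\#S_{r,t}\cdot \nu(g_{-t}V_rg_t)$. Since $r < r'' < 1/8$, we have $2re^{-\lambda_0 t} < 1$ for all $t \ge 0$, so \equ{cube d} applies (possibly after harmlessly enlarging $K_3$ to absorb the factor $2r$) and bounds the boundary-layer term by $K_3 e^{-\lambda_0 t}$. Dividing through by $\nu(g_{-t}V_rg_t)$ then yields
\[
\#S_{r,t}\ \le\ \frac{\nu(V_r) + K_3 e^{-\lambda_0 t}}{\nu(g_{-t}V_rg_t)}\ =\ \frac{\nu(V_r)}{\nu(g_{-t}V_rg_t)}\left(1 + \frac{K_3 e^{-\lambda_0 t}}{\nu(V_r)}\right),
\]
as desired. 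The only mildly subtle step is verifying that the automorphism $\alpha_t$ turns the original tessellation into a genuine tessellation (with disjoint interiors and full-measure union); the rest is a routine packing/boundary-layer volume comparison.
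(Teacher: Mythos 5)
Your argument is correct and is essentially the paper's proof, merely packaged slightly differently: where the paper splits $\#S_{r,t}$ into boxes contained in $V_r$ and boxes meeting $\partial V_r$ and bounds each count separately, you bound $\#S_{r,t}\cdot\nu(g_{-t}V_r g_t)$ as the measure of the union of boxes and then split that measure into $\nu(V_r)$ plus a boundary-layer term. Both rest on exactly the same three facts (the Bowen boxes tessellate $P$, the diameter bound \equ{diam}, and the boundary-layer estimate \equ{cube d}), so the two proofs coincide; the one inessential wrinkle in yours is the remark about enlarging $K_3$, which is unnecessary since $2r<1$ already makes $2rK_3 e^{-\lambda_0 t}<K_3 e^{-\lambda_0 t}$.
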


\begin{proof}
Let $0<r < r''$ and {$t>0 $}.
One has:
\[\# S_{r,t}  = \# \{ \gamma  \in \Lambda_r :{g_{ - t}}{V_r\gamma}{g_t}  \subset {V_r}\}  + \# \{ \gamma  \in \Lambda_r :{g_{ - t}}{V_r\gamma}{g_t}  \cap \partial {V_r} \ne \varnothing \}. \]
 Since $V_r$ is a tessellation domain of $P$ relative to $\Lambda_r$, the first term in the above sum is not greater than $\frac{\nu(V_r)}{\nu(g_{-t}V_rg_t)}$, {while in view of \equ{cube d} and \equ{diam}, the second term is not greater than:
$${\frac{{\nu (\{ p \in P:{\dist}(p,\partial {V_r}) < \diam({g_{ - t}}{V_r}{g_t})\})}}{{\nu ({g_{ - t}}{V_r}{g_t})}}<\frac{2r{K_3}e^{-\lambda_0t}}{\nu({g_{ - t}}{V_r}{g_t})}<\frac{{K_3}e^{-\lambda_0 t}}{\nu({g_{ - t}}{V_r}{g_t})}.}$$
This finishes the proof.}
 \end{proof}

{Now let $U$ be an arbitrary subset of $X$. The next lemma can be used to turn the measure estimate from \S\ref{main Section0} into a covering result.}

\begin{lem}
\label{covering of A^P} For any
{$x \in X$, any $U \subset X$, any $0<r<r''$  and  any $t>0$ we have} 
$${A}^P\Big(t,{\frac{r}{16 \sqrt{L}}},{\sigma _{r/2}}{U}, {1},x\Big) \subset \bigcup_{\substack{\gamma \in S_{r,t}\\V_r \gamma g_t x \subset U}}{  {g_{ - t}}V_r \gamma {g_t}}.$$
\end{lem}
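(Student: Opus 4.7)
The plan is to take an arbitrary $h$ in the left-hand side and exhibit, from the tessellation data, an index $\gamma\in S_{r,t}$ with $V_r\gamma g_tx\subset U$ such that $h\in g_{-t}V_r\gamma g_t$. By the definition \equ{ap} of $A^P$ and by \equ{Bowen inc}, any such $h$ satisfies $h\in B^P\big(r/(16\sqrt L)\big)\subset V_r$ together with $g_thx\in\sigma_{r/2}U$. Because $P$ is normalized by $F^+$, the conjugate $g_thg_{-t}$ again lies in $P$; applying the tessellation property of $V_r$ relative to $\Lambda_r$ produces some $\gamma\in\Lambda_r$ and $v_0\in\overline{V_r}$ with $g_thg_{-t}=v_0\gamma$, equivalently $h=g_{-t}v_0\gamma g_t$. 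This $\gamma$ will be my candidate.

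That $\gamma\in S_{r,t}$ is then immediate: $h$ lies in $V_r$ and simultaneously in $g_{-t}V_r\gamma g_t$ (absorbing a measure-zero boundary issue into the closure), so $V_r\cap g_{-t}V_r\gamma g_t\neq\varnothing$, which is the defining condition. The substantive step is to verify $V_r\gamma g_tx\subset U$. Fix an arbitrary $v\in V_r$. Since the Riemannian metric on $G$ is right-invariant and descends to $X$ so that $\dist(vy,v_0y)\le\dist(v,v_0)$ for any $y\in X$, and since $V_r\subset B^P(r/4)$ implies $\dist(v,v_0)<r/2$ by the triangle inequality, I obtain $\dist(v\gamma g_tx,v_0\gamma g_tx)<r/2$.

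On the other hand, $v_0\gamma g_tx=g_thx\in\sigma_{r/2}U$, so by definition $\dist(v_0\gamma g_tx,U^c)>r/2$. Combining these via the triangle inequality gives $\dist(v\gamma g_tx,U^c)>r/2-r/2=0$, and since $U^c$ is closed (it is compact in the ambient setting of Theorem \ref{main cov}), this forces $v\gamma g_tx\in U$, as required. The only genuine subtlety is the possibility that $g_thg_{-t}$ lies on some $\partial V_r\gamma$, placing $h$ only in the closure $g_{-t}\overline{V_r}\gamma g_t$ rather than the open cell; this is harmless because $\bigcup_\gamma\overline{V_r}\gamma=P$ still guarantees a valid choice of $\gamma$, and the statement of the lemma is implicitly modulo this null set of boundaries, which is immaterial for the subsequent counting and dimension estimates.
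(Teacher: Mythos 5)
Your proposal is correct and follows essentially the same line of reasoning as the paper: you identify the tessellation cell containing $g_th g_{-t}$, use the diameter bound $\diam(V_r) \le r/2$ together with right-invariance of the metric to transport the estimate along the orbit, and conclude from $g_thx \in \sigma_{r/2}U$ that the whole set $V_r\gamma g_t x$ sits inside $U$ (this is precisely the inclusion $\partial_{r/2}\sigma_{r/2}U \subset U$ invoked in the paper). One small remark: the appeal to $U^c$ being closed or compact is unnecessary — the inequality $\dist(v\gamma g_t x, U^c) > 0$ already forces $v\gamma g_t x \notin U^c$ for an arbitrary $U\subset X$, which is the generality in which the lemma is stated.
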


\begin{proof}
For any $\gamma \in {P}$ and any $p_1,p_2 \in V_r$ 
we have:
{
\eq{bd}
{\begin{split}
{\dist}\big(p_1\gamma g_t  x,p_2\gamma g_t x\big) 
 \le {\dist}(p_1,p_2) \le \diam(V_r)<r/2.
\end{split}}
}
{Hence, if $$ {A}^P\Big(t,{\frac{r}{16 \sqrt{L}}},{\sigma _{r/2}}{U},{1},x\Big) \cap g_{-t}V_r \gamma g_t  \ne \varnothing$$  for $\gamma \in \Lambda_r$, 
then  {for some $p\in B^P\big(\frac{r}{16 \sqrt{L}}\big)\subset V_r$  one has $g_tpx \in \sigma _{r/2}{U} \cap V_r\gamma g_tx$}, 
and in view of {\equ{bd}} and ${\partial _{r/2}}{\sigma _{r/2}}{U} \subset U$, we can conclude that 
{$V_r \gamma g_t x \subset U$}.}
\end{proof}
The next corollary follows immediately from Lemma \ref{covering of A^P}:
\begin{cor}\label{sub count}
For any
{$x \in X$,   $U \subset X$,   $0<r<r''$  and    $t>0$ we have}
$$  \#\{\gamma \in S_{r,t} : V_r \gamma g_t x \subset U \}
 \ge \frac{\nu\left({A}^P(t,{\frac{r}{16 \sqrt{L}}},{\sigma _{r/2}}{U},{1},x)\right)}{\nu({g_{ - t}}V_r {g_t})}.           $$
\end{cor}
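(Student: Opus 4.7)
The plan is to deduce Corollary~\ref{sub count} immediately from Lemma~\ref{covering of A^P} by taking Haar measures on both sides of the inclusion and performing a routine counting argument. Starting from
$${A}^P\Big(t,\tfrac{r}{16\sqrt{L}},\sigma_{r/2}U,1,x\Big)\ \subset \bigcup_{\substack{\gamma\in S_{r,t}\\ V_r\gamma g_t x\subset U}} g_{-t}V_r\gamma g_t,$$
I would apply $\nu$ to both sides and use countable subadditivity to obtain
$$\nu\!\left({A}^P\!\Big(t,\tfrac{r}{16\sqrt L},\sigma_{r/2}U,1,x\Big)\right)\ \le \sum_{\substack{\gamma\in S_{r,t}\\ V_r\gamma g_t x\subset U}}\nu\big(g_{-t}V_r\gamma g_t\big).$$

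The key observation is then that every summand on the right equals $\nu(g_{-t}V_r g_t)$, independent of $\gamma$. Since $P$ is a subgroup of the horospherical group $H$, which is nilpotent, $P$ is itself nilpotent and therefore unimodular; moreover, $P$ is normalized by $F^+$ by the very definition of property (EEP), so $g_{-t}\gamma g_t\in P$. Writing $g_{-t}V_r\gamma g_t=(g_{-t}V_r g_t)\cdot (g_{-t}\gamma g_t)$ and invoking right-invariance of the Haar measure $\nu$ gives
$$\nu(g_{-t}V_r\gamma g_t)=\nu(g_{-t}V_r g_t).$$

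Substituting this into the preceding inequality and dividing through by $\nu(g_{-t}V_r g_t)$ (which is strictly positive since $V_r$ is a nonempty open subset of $P$) yields the claimed lower bound on the cardinality of $\{\gamma\in S_{r,t}:V_r\gamma g_t x\subset U\}$. There is no real obstacle here: the argument is essentially just subadditivity combined with the right-invariance of Haar measure under conjugation-translation by elements of $P$, which is automatic in the present nilpotent setting. The only point worth flagging is that in a non-unimodular context one would have to track a modular character, as was done in the decomposition recorded in \equ{bou}.
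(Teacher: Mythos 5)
Your proposal is correct and takes essentially the same route as the paper: the paper states that Corollary~\ref{sub count} ``follows immediately from Lemma~\ref{covering of A^P}'' without giving details, and what you have written out --- take $\nu$ of both sides of the inclusion, use subadditivity, note that each Bowen box $g_{-t}V_r\gamma g_t$ decomposes as $(g_{-t}V_rg_t)(g_{-t}\gamma g_t)$ with $g_{-t}\gamma g_t\in P$ by the normalization required in (EEP), and invoke right-invariance of the Haar measure on the nilpotent (hence unimodular) group $P$ --- is exactly the argument being left implicit. Good.
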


{For the proof of  {Theorem \ref{main cov}} we will also need to cover Bowen boxes by small balls. 
The next lemma provides a bound for the number of balls of radius $re^{-\lambda_{\max} t}$ needed to cover a Bowen $(t,r)$-box.
\begin{lem}
\label{coveringballs} There exists $K_4 > 0$ such that for any $0<r < r''$ and any {$t>0 $},  any Bowen $(t,r)$-box in $P$ can be covered with at most  $K_4 \frac{{\nu ({g_{ - t}}V_r {g_{t}})}}{{\nu \left({B^P}(r e^{ -  \lambda_{\max} t})\right)}}$
balls in $P$ of radius $re^{- \lambda_{\max}t}$. 
\end{lem}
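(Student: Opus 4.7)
The strategy is a standard maximal-packing argument combined with a volume estimate in exponential coordinates, exploiting that conjugation by $g_{-t}$ contracts $\frak p = \Lie(P)$ along each eigenspace of $\ad g_1|_{\frak p}$.

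First I would reduce to the case $\gamma = e$. Since
\[
g_{-t}V_r\gamma g_t = (g_{-t}V_rg_t)\cdot(g_{-t}\gamma g_t),
\]
a Bowen $(t,r)$-box is the right-translate of $g_{-t}V_rg_t$ by an element of $P$. Because $P$ is nilpotent (hence unimodular) and the Riemannian metric on $G$ is right-invariant, right-translations preserve both $\nu$ and the metric, so it suffices to cover $g_{-t}V_rg_t$ by the asserted number of balls.

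Next, set $\rho := \tfrac{1}{2}re^{-\lambda_{\max}t}$ and pick a maximal collection $\{p_1,\dots,p_k\} \subset g_{-t}V_rg_t$ with $\{B^P(p_i,\rho)\}$ pairwise disjoint. By maximality the doubled balls $B^P(p_i,2\rho)$ cover $g_{-t}V_rg_t$, so it suffices to bound $k$. The $\rho$-balls being disjoint and contained in the $\rho$-neighborhood of $g_{-t}V_rg_t$, \equ{Ball measure} gives
\[
k\,c_1\rho^L \;\le\; k\,\nu\bigl(B^P(\rho)\bigr) \;\le\; \nu\bigl(\partial_\rho(g_{-t}V_rg_t)\bigr).
\]

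The heart of the argument is to show $\nu\bigl(\partial_\rho(g_{-t}V_rg_t)\bigr) \le C\,\nu(g_{-t}V_rg_t)$ for a constant $C$ depending only on $P$ and $F$. Working in exponential coordinates, in which $\exp$ is $2$-bi-Lipschitz on $\tfrac{r}{4\sqrt L}I_P$ for $r<r''$, $V_r$ corresponds to the cube $\tfrac{r}{4\sqrt L}I_P$ and $g_{-t}V_rg_t$ to the parallelepiped $\Ad(g_{-t})(\tfrac{r}{4\sqrt L}I_P)$, whose principal side lengths (along the eigenbasis of $\ad g_1|_\frak{p}$) are $\tfrac{r}{4\sqrt L}e^{-\lambda_i t}$ for various eigenvalues $\lambda_i$ satisfying $\lambda_0 \le \lambda_i \le \lambda_{\max}$. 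Each side length is therefore at least $\tfrac{r}{4\sqrt L}e^{-\lambda_{\max}t} = \tfrac{\rho}{2\sqrt L}$, i.e.\ at least a uniform constant times $\rho$. Expanding each side by $2\rho$ multiplies the Euclidean volume by at most $(1+4\sqrt L)^L$, and the bi-Lipschitz estimate transfers this into the desired bound on $\nu\bigl(\partial_\rho(g_{-t}V_rg_t)\bigr)$.

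Combining these estimates, and using \equ{Ball measure} once more to compare $\nu(B^P(\rho))$ to $\nu(B^P(2\rho)) = \nu(B^P(re^{-\lambda_{\max}t}))$, yields
\[
k \;\le\; \frac{C\,\nu(g_{-t}V_rg_t)}{c_1\rho^L} \;\le\; K_4\,\frac{\nu(g_{-t}V_rg_t)}{\nu\bigl(B^P(re^{-\lambda_{\max}t})\bigr)}
\]
with an explicit $K_4$ depending only on $L, c_1, c_2$ and the bi-Lipschitz constant of $\exp$. The main point of caution is the bookkeeping in the coordinate change: translating the intrinsic $\rho$-neighborhood of the Bowen box into a Euclidean thickening of the parallelepiped, while uniformly controlling the bi-Lipschitz distortion; this is routine since both $\rho$ and the diameter of $V_r$ stay in the regime $r<r''$ where the estimates of \equ{Ball measure} apply.
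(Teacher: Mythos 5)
Your proof is correct and takes essentially the same approach as the paper's: both hinge on the volume estimate $\nu\bigl(\partial_\rho B\bigr)\ll\nu(B)$ for a Bowen box $B$, derived from the observation that in exponential coordinates $B$ corresponds to a box whose every sidelength is at least a fixed constant times $\rho = re^{-\lambda_{\max}t}$. The only difference is cosmetic: you convert the volume bound into a covering count via a maximal-packing/doubling argument, whereas the paper invokes the Besicovitch covering property of $P$; both yield the same bounded-multiplicity conclusion, and your preliminary right-translation reduction to $\gamma=e$ is valid (and implicit in the paper's phrase ``translate of the exponential image of a box'').
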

\begin{proof} Let $B = g_{-t}V_r \gamma g_t$ be a Bowen $(t,r)$-box. 
In view of the Besicovitch covering property of $P$, any covering of $B$ by balls in $P$ of radius $re^{- \lambda_{\max}t}$ has a subcovering of  index uniformly bounded from above by a fixed constant   (the Besicovitch constant of $P$). The union of those balls is contained in the $re^{-\lambda_{\max} t}$-neighborhood of $B$. But since $B$ is a translate of the exponential image of a box in $\frak p$ whose  smallest sidelength is $re^{-\lambda_{\max} t}$, it follows that the measure of  the $re^{-\lambda_{\max} t}$-neighborhood of $B$ is bounded by a uniform constant times $\nu(B)$, and the lemma follows.\end{proof}}

\ignore{ Vice versa, if $g_tpq^{-1}g_{-t} \in B^P(r)$ then there exists $p_2 \in B^P(r)$ such that  $g_tpq^{-1}g_{-t}=p_2$ which implies that $p=g_{-t}p_2g_tq$. So we conclude that $p \in g_{-t}B^P(r)g_tq$ and  $g_{-t}B^P(r)g_tp$ and $g_{-t}B^P(r)g_tq$ have non-empty intersection and if and only if ${\dist}(g_tp,g_tq)<r$. Thus if $S$ is the maximal subset of $P$ such that ${\dist}(g_tp,g_tq) \ge r$ for any $ p,q \in P$, then ${\{ {g_{ - t}}{B^P}(r){g_t}p\} _{p \in S}}$ will be a covering of $P$ with disjoint Bowen-(t,r) balls in $P$. In particular, \comm{(?)} we can cover $B^P(r)$ with disjoint Bowen-(t,r) balls and it's easy to see that the number of Bowen balls in the cover will be at most:  
\eq{rectangle}{{\frac{{\nu ({B^P}(r))}}{{\nu ({g_{ - t}}{B^P}(r){g_t})}}}}}

{We are now ready to begin  the}
\begin{proof}[Proof of Theorem \ref{main cov}]
{Take $a,b,E', \lambda'$ be as in Proposition \ref{exponential mixing}, $K_3$ as in \equ{cube d}, {$K_4$ as in Lemma \ref{coveringballs}} and $\lambda_0$ as in \equ{b1}.}
Fix $U \subset X$   such that $U^c$ is compact, and take $0<r<r_0$, $x\in \partial_{r}{U^c},$ and  $t>a+b \log \frac{1}{r}$. Define for any $k \in \N$
$$E_{V_r}(t,{k},x ):=\big\{p \in V_r:g_{{\ell t}}px {\,\notin U}\,\, {\forall \ell \in \{1,2,\cdots,k     \}}\big\}.   $$
Recall that our goal is to construct a covering of the set ${A}^P\big(t,{\frac{r}{16 \sqrt{L}}},{U^c},{k},x\big)$ {for any $k \in \N$}, which is a subset of $E_{V_r}(t,{k},x)$ in view of \equ{Bowen inc}. Note that 
 for $\gamma \in P$, the Bowen $(t,r)$-box $g_{-t}V_r \gamma g_t $ does not intersect $E_{V_r}(t,{1},x )$ if and only if $V_r \gamma g_t x \subset U$. Combining  Lemma \ref{covering} with Corollary \ref{sub count} {and then with Proposition \ref{exponential mixing}}, we conclude that 
 %
 ${E_{V_r}(t,{1},x)}$ can be covered with at most
\eq{nrt}{
\begin{aligned}
\# S_{r,t}  &- \# \{ \gamma \in S_{r,t} : V_r \gamma g_t x \subset U\} \\
& \le  \frac{{\nu ({V_r})}}{{\nu ({g_{ - t}}{V_r}{g_t})}}  \left(1 + \frac{{K_3}e^{-\lambda_0 t}}{{\nu ({V_r})}}\right) - \frac{\nu\left({A}^P(t,{\frac{r}{16 \sqrt{L}}},{\sigma _{r/2}}{U}, {1},x)\right)}{\nu({g_{ - t}}V_r {g_t})}   \\
& \le \frac{{\nu ({V_r})}}{{\nu ({g_{ - t}}{V_r}{g_t})}} \cdot \left( {1 + \frac{{{{K_3}}{e^{ - \lambda_0 t}} - 
{\nu\left(B^P\big(\frac{r}{16 \sqrt{L}} \big)\right)\mu ({\sigma _{r}}U)  + E'{e^{ - \lambda 't}}}
}}{{\nu ({V_r})}}} \right)\\&=:N(r,t) 
\end{aligned}}
Bowen $(t,r)$-boxes in $P$.

\ignore{
$\nu (\partial V) = 0.$
\item
$V \gamma_1 \cap V \gamma_2 = \varnothing$ for different $\gamma_1,\gamma_2 \in \Lambda$.
\itemhttps://www.overleaf.com/project/5c8107c5e5384a560dd86568
$P = \bigcup\limits_{\gamma  \in \Lambda } {\overline V \gamma }.$
\end{itemize} 
By the construction in section $3$ of \cite{KM1}, since $P$ is a nilpotent Lie group of dimension $L$, there exists a suitable basis $\{X_1,X_2,\cdots, X_L\}$ of the Lie algebra of $P$ such that if ${I_P} = \{ \sum\limits_{j = 1}^P {{x_j}{X_j}:\left| {{x_j}} \right|}  < 1/2\} $ represents the unit cube in Lie algebra of $P$, then for any $R \ge 1$, $V=\exp(\frac{1}{R}I_P)$ is a tessellation for $P$ relative to $\Lambda=\exp(\frac{1}{R} \Z X_1)$. So for any $r \le 1$ let us define $V_r=\exp (\frac{r}{4 \sqrt{L}} I_P)$. Then for any $r \le 1$, $V_r$ is a tessellation domain of $P$ relative to $\Lambda_r=\exp(\frac{r}{4 \sqrt{L}} \Z X_1)$. Moreover, since $V_r$ is the image of the cube of side length $\frac{r}{4 \sqrt{L}}$ under the exponential map and exponential map is $2$-bi-Lipschitz in a small neighborhood of $0$, there exists $0<s'<1$ such that for any $r<s'$ we have
\eq{Bowen inc}{{B^P}(\frac{r}{{8\sqrt L }}) \subset {V_r} \subset {B^P}(r/2)}
.

Similar to \cite{KM1}, for any $r,t>0$ let $c_{r,t}$ to be the contraction bound of the map $g \to g_{-t}gg_t$ in $V_r$ defined by:
\[{c_{r,t}} = \mathop {\sup }\limits_{g,h \in V_r,g \ne h} \frac{{{\dist}({g_{ - t}}g{g_t},{g_{ - t}}h{g_t})}}{{{\dist}(g,h)}}\]
Since by \equ{Bowen inc} for any $0<r<s'$ $V_r \subset B^P(r/2)$, by \equ{basis1} we have for any $0<r<\min\{s,s'\}$ and any $t>0$:
\eq{contraction bound}{c_{r,t} \le 4e^{- \lambda_0 t}}
Define a function:
\[{f_{{V_r}}}(t) = \nu (\{ h \in P:{\dist}(h,\partial {V_r}) < {c_{r,t}} \cdot \diam(V_r) \} )\]
Note that $\nu (\partial(V_r))=0$ and $c_{r,t} \to 0$ as $t \to \infty$, thus ${f_{{V_r}}}(t) \to 0$ as $t \to \infty$. Moreover, since the exponential map is $2$-bi-Lipschitz in a small neighborhood of $0$ and $V_r$ is the image of the cube of side length $\frac{r}{4 \sqrt{L}}$, it's not hard to see that there exist constants $s''>0$ and ${K_1} \ge 1$ independent on $F$ and such that for any $0<r<s''$ and any $t>0$, if $c_{r,t} \cdot \diam(V_r)<1$ then:
\eq{local bound}{{f_{{V_r}}}(t) < {K_1} \cdot c_{r,t} \cdot \diam(V_r)}
This implies that in view of \equ{Bowen inc}, \equ{contraction bound} and \equ{local bound} we have for any $r < \min\{s,s',s''\}<1/8$ and any $t>\frac{1}{b} \log \frac{16 \sqrt{L}}{r} $: 
\eq{main local bound}{{f_{{V_r}}}(t) < {K_1} \diam(V_r) e^{- \lambda_0 t}<{K_1} r e^{- \lambda_0 t}<{K_1} e^{- \lambda_0 t}}
Define $r'':= \min\{s,s',s''\}$ and let $r<\min\{s,s',s''\}$ and $t>\frac{1}{b} \log \frac{16 \sqrt{L}}{r} $. By a {\sl Bowen $(t,r)$-box} in $P$ we mean any set of the form $g_{-t}V_r \gamma g_t$  for $\gamma \in \Lambda$ in $P$. 
Define $S:=\{ \gamma  \in \Lambda_r :{g_{ - t}}{V_r\gamma}{g_t}  \cap {V_r} \ne \varnothing \}$.
Note that since $(V_r,\Lambda_r)$ is a tessellation domain, $V_r$ can be covered with $\#S$ Bowen $(t,r)$-boxes in $P$. The following Lemma gives us an upper bound for $\#S$. 
We have 
\begin{lem}
\label{covering} \[\# S  \le \frac{{\nu ({V_r})}}{{\nu ({g_{ - t}}{V_r}{g_t})}}\cdot \left(1 + \frac{{{f_{{V_r}}}(t)}}{{\nu ({V_r})}}\right)\]
\end{lem}

\begin{proof}
One has:
\[\# S  = \# \{ \gamma  \in \Lambda_r :{g_{ - t}}{V_r\gamma}{g_t}  \subset {V_r}\}  + \# \{ \gamma  \in \Lambda_r :{g_{ - t}}{V_r\gamma}{g_t}  \cap \partial {V_r} \ne \varnothing \} \]
 Since $(V_r,\Lambda_r)$ is a tessellation, the first term in the above sum is not greater than $\frac{\nu(V_r)}{\nu(g_{-t}V_rg_t)}$, while in view of the second term is not greater than:
 \[\frac{{\nu (\{ p \in P:{\dist}(p,\partial {V_r}) < \diam({g_{ - t}}{V_r}{g_t})\})}}{{\nu ({g_{ - t}}{V_r}{g_t})}} \leqslant \frac{{{f_{{V_r}}}(t)}}{{\nu ({g_{ - t}}{V_r}{g_t})}}\]
 and we are done.
 \end{proof}
\ignore{ Vice versa, if $g_tpq^{-1}g_{-t} \in B^P(r)$ then there exists $p_2 \in B^P(r)$ such that  $g_tpq^{-1}g_{-t}=p_2$ which implies that $p=g_{-t}p_2g_tq$. So we conclude that $p \in g_{-t}B^P(r)g_tq$ and  $g_{-t}B^P(r)g_tp$ and $g_{-t}B^P(r)g_tq$ have non-empty intersection and if and only if ${\dist}(g_tp,g_tq)<r$. Thus if $S$ is the maximal subset of $P$ such that ${\dist}(g_tp,g_tq) \ge r$ for any $ p,q \in P$, then ${\{ {g_{ - t}}{B^P}(r){g_t}p\} _{p \in S}}$ will be a covering of $P$ with disjoint Bowen-(t,r) balls in $P$. In particular, \comm{(?)} we can cover $B^P(r)$ with disjoint Bowen-(t,r) balls and it's easy to see that the number of Bowen balls in the cover will be at most:  
\eq{rectangle}{{\frac{{\nu ({B^P}(r))}}{{\nu ({g_{ - t}}{B^P}(r){g_t})}}}}}
Now let $x \in \partial_{r}U^c$ and let us define the following sets:
$$E_{V_r}(t,x):=\{p \in V_r:g_tpx \in U^c\}   $$
 $$
S' := \{ \gamma \in S: g_{-t}V_r \gamma g_t \subset ({E_{V_r}(t,x)})^c\}.
$$
Note that since $B^P(\frac{r}{8 \sqrt{L}}) \subset V_r$, we have:
\eq{Bowen inclusion}{E_{x,\frac{r}{16  \sqrt{L}}} \subset {E_{V_r}(t,x)}.} 
Here is our next observation:
 \begin{lem}
\label{covering of A^P}  ${A}^P(t,{\frac{r}{16 \sqrt{L}}},{\sigma _{r/2}}{U},x) \subset \bigcup_{\gamma \in S'} {  {g_{ - t}}V_r \gamma {g_t}}$.
\end{lem}

\begin{proof}
For any $\gamma \in \Lambda_r$ and any $p_1,p_2 \in V_r$ 
by \equ{basis1} we have:
\eq{Bowen distance}{
\begin{split}
{\dist}({g_{-t}}p_1\gamma g_t,{g_{-t}}p_2\gamma g_t) = {\dist}({g_{-t}}p_1{p_2}^{-1}{g_{ t}},e)
& < 8re^{- \lambda_0 t}<8r \cdot \frac{r}{16 \sqrt{L}}<\frac{r}{16 \sqrt{L}} .\\
\end{split}
}
Also  we have:
\eq{Conjugate Bowen distance}{
\begin{split}
{\dist}(g_t({g_{-t}}p_1\gamma g_t ) x,g_t({g_{-t}}p_2\gamma g_t)x) = {\dist}(p_1 \gamma g_tx,p_2 \gamma g_t x) 
& \le {\dist}(p_1,p_2)<r/2.
\end{split}
}

Hence if $ \in {A}^P(t,{\frac{r}{16 \sqrt{L}}},{\sigma _{r/2}}{U},x) \cap g_{-t}V_r \gamma g_t  \ne \varnothing $ for $\gamma \in \Lambda_r$, then by \equ{Bowen distance} and \equ{Conjugate Bowen distance} and in view of ${\partial _{r/2}}{\sigma _{r/2}}{U} \subset U$ we have $g_{-t}V_r \gamma g_t \subset 
{A}^P(t,{\frac{r}{16  \sqrt{L}}},\partial _{r/2}{\sigma _{r/2}}{U},x) \subset {A}^P(t,{\frac{r}{16  \sqrt{L}}},{U},x) \subset ({E_{V_r}(t,x)})^c$.    
 Hence ${  {g_{ - t}}V_r \gamma {g_t}} \subset \bigcup_{\gamma \in S'} {  {g_{ - t}}V_r \gamma {g_t}}$ and we are finished.  
\end{proof}
}


\ignore{$\alpha \ge 0$ be such that for any $r$ and any $t$} 
\ignore{And for any $r>0$ and any $k \in \N$ :
\eq{Bowen measure1}{\frac{{\nu ({g_{ - kt}}{B^P}(r){g_{tk}})}}{{{{(\nu ({g_{ - t}}{B^P}(r){g_t}))}^k}}} = \nu {({B^P}(r))^{k - 1}}}}

Now let $g_{-t}V_r \gamma g_t$ be one of the Bowen $(t,r)$-boxes in the above cover which has non-empty intersection with ${E_{V_r}(t,{1},x)}$. 
Take any $q={g_{ - t}}{h} \gamma{g_t} \in g_{-t}V_r \gamma g_t$; then
${{g_t}qx = {h} \gamma g_t x}$, hence
${\left\{ {{g_t}qx:q \in g_{-t}V_r \gamma g_t\,} \right\} = \left\{ {h} \gamma g_t x:{h} \in V_r \right\}.}$
Consequently, 
\eq{induction2}{
\{q \in g_{-t}V_r \gamma g_t: g_{2t}qx \notin U\}=g_{-t}{E_{V_r}(t,{1},x)} \gamma g_t.}
Note that since $\diam(V_r)<r$ and $g_{-t}V_r \gamma g_t \cap {E_{V_r}(t,{1},x)}  \ne \varnothing $, we have $\gamma g_t x \in \partial_{r} U^c$. 
Hence, 
by {going through} the same procedure, this time using $\gamma g_t x$ in place of $x$, we can cover the set 
in the left hand side of  \equ{induction2} with at most $N(r,t)$ Bowen $(2t,r)$-boxes in $P$. Therefore, we conclude that the set ${E_{V_r}(t,{2},x)}$ can be covered with at most $N(r,t)^2$ Bowen $(2t,r)$-boxes in $P$. By doing this procedure inductively, we can see that for any $k \in \N$, the set ${E_{V_r}(t,{k},x)}$ can be covered with at most
\ignore{\eq{Bowen}{{\left( \frac{{\nu ({B^P}(r+4re^{-tb}))}}{{\nu ({g_{ - t}}{B^P}(r){g_t})}} \right)^k} \cdot {\left( 1 - \frac{c_1}{c_2} \cdot 2^{-L} \nu {({B^P}(r+4re^{-t \lambda_0}))^{ - 1}}\cdot \mathop {\inf }\limits_{y \in {\partial _r}{U^c}} \nu ({{A}^P}(t,r,{\sigma _{2r}}U,y)))\right)^k}}}$N(r,t)^k$ Bowen $(tk,r)$-boxes in $P$. \ignore{Moreover, {using \equ{eigen value} and the Besicovitch covering property of $P$,
it is easy to see that for some ${{K_4}}>0$ only dependent on $L$, any Bowen $(tk,r)$-box in $P$ can be covered with at most  ${{K_4}} \frac{{\nu ({g_{ - tk}}V_r {g_{tk}})}}{{\nu \left({B^P}(re^{ - k {\lambda_{\max}}t})\right)}}$
 balls in $P$ of radius $re^{-k {\lambda_{\max}}t}$.}}
 Thus, {in view of Lemma \ref{coveringballs},} the set 
${E_{V_r}(t,{k},x)}$ can be covered with at most 
$${{K_4}} \frac{{\nu ({g_{ - tk}}V_r{g_{tk}})}}{{\nu \big({B^P}(r e^{ - k {{\lambda_{\max}}}t})\big)}}  N(r,t)^k
$$
balls of radius $re^{-k{\lambda_{\max}}k}$ in $P$. 

{Now observe} that for any $r>0$ and any $k \in \N$ one has
{
\eq{compwithkadyrov}{
\left(\frac{{\nu (V_r)}}{{{{\nu ({g_{ - t}}V_r{g_t})}}}}\right)^k = {\frac{{\nu (V_r)}}{{\nu ({g_{ - kt}}V_r{g_{tk}})}}}
\,.}
{Here it is crucially important\footnote{{We note that a similar step in the proof of \cite[Theorem 3.1]{K} uses balls instead of boxes, and the boundary effects make it difficult to justify the corresponding equality.}} that the translates of $V_r$ form a tessellation of $P$.}
Using \equ{Ball measure} and  \equ{compwithkadyrov} we get
$$
\begin{aligned}\frac{{\nu ({g_{ - tk}}V_r{g_{tk}})}}{{\nu \big({B^P}(r{e^{ - k {\lambda_{\max}}t}})\big)}}   {\left( {\frac{{\nu (V_r)}}{{\nu ({g_{ - t}}V_r{g_t})}}} \right)^k} 
 &= \frac{{\nu (V_r)}}{{\nu \big({B^P}(r{e^{ - k {\lambda_{\max}}t})\big)}}} \\ &\le  \frac{c_2 (r/4)^L}{c_1 r^L e^{ - Lk {\lambda_{\max}}t}} = 
 \frac{c_2}{4^Lc_1} {e^{Lk {\lambda_{\max}}t}},
 \end{aligned}
$$
which,  in view of  \equ{Bowen inc}, \equ{Ball measure} and 
{the definition \equ{nrt} of $N(r,t)$,}
 implies that   $${A}^P\big(t,{\frac{r}{16 \sqrt{L}}},{U^c}, {k},x\big)\subset{E_{V_r}(t, {k},x)}$$ can be covered with at most
$${\begin{split}
& \  \frac{{{K_4}} c_2}{4^Lc_1} {e^{Lk {\lambda_{\max}}t}} 
\cdot \left(1 + \frac{{K_3} e^{-\lambda_0 t}-
{\nu\left(B^P\big(\frac{r}{16 \sqrt{L}} \big)\right)\mu ({\sigma _{r}}U)  + E'{e^{ - \lambda 't}}}
}{{\nu ({V_r})}}\right)^k \\
 \le & \  \frac{{{K_4}} c_2}{4^Lc_1} {e^{Lk {\lambda_{\max}}t}}  \left(1 + \frac{{K_3}(16 \sqrt{L})^L e^{-\lambda_0 t}}{c_1 r^L} -  \frac{c_1}{c_2 (4  \sqrt{L})^L } \mu (\sigma_r U) + \frac{4^L E' e^{-\lambda ' t}}{c_2 r^L}  \right)^k  \\
 \le &\ {K_0}{e^{Lk {\lambda_{\max}}t}} \left(1 - K_1 \mu (\sigma_r U)+ \frac{K_2 e^{-\lambda_1 t}}{r^L}\right)^k 
\end{split}}$$
balls in $P$ of radius $re^{-k {\lambda_{\max}}t}$, where $${K_0}= \frac{{{K_4}} c_2}{4^Lc_1}, \ {{K_1}=\frac{c_1}{c_2 (4  \sqrt{L})^L },\  K_2=\frac{K_3 (16 \sqrt{L})^L}{c_1} +\frac{4^L E'}{c_2},}$$ and $\lambda_1 = \min(\lambda_0, \lambda ')$. 
 } \end{proof}

\section{(EEP) for the group $P$ {as in \equ{subgroup mix}}} \label{eepp}

In the last two sections of the paper  we 
prove Theorem \ref{main theorem
3}. Namely we 
fix two positive integers $m, n$, take   $X  = \ggm$  as in \equ{slmn} 
and consider $F = \{g_t\} = g_t^{\vr,\vs}$ as in  
\equ{generalgt}, where $\vr$ and 
$\vs$ 
are as in \equ{simplex}. We also define
\eq{defalpha}{{\alpha}=\min\{
i_1,\dots,
i_m,
j_1,\dots,
j_n\}.}
{Let us denote $m+n$ by ${d}$. In what follows, constants $C_1,C_2,\dots$ 
will only depend on $m$ and $n$.} 

Our goal in this section is to prove that $P$ as in 
\equ{subgroup mix} satisfies (EEP) with respect to the $F^+$-action on $X$.  Note that, unless $\vr = \vm$ and $\vs = \vn$, $P$ is a proper subgroup of the expanding horospherical subgroup relative to $g_1$, hence Theorem \ref{thmheep} is not applicable. 
In \cite{KM4}, the  proof of effective equidistribution of $g_t$-translates of orbits of $P$ 
used the observation that $P$ is an expanding horospherical subgroup relative to another element of $G$.
We are going to work out an explicit estimate for the constant in \cite[Theorem 1.3]{KM4}; namely, establish
\begin{thm}\label{1.5} Let $P$ be as in 
\equ{subgroup mix}, 
$F = \{g_t\}$ as in  
\equ{generalgt}, and $X$ as in \equ{slmn}. Then $P$ satisfies {\rm (EEP)} relative to the $F^+$-action on $X$.
\ignore{There exist $\lambda, \alpha, C_1> 0$ such that
for any $f\in \cic(H)$ with $\supp f \subset B:=B(1)$, 
$\psi\in
\cic(X)$
 and for any compact  $L\subset X$ there
exist $\lambda_L>0$, $\tilde C:=\tilde C(f,\psi,\rho)$ and $T:=T(L)=\frac{1}{\lambda} \max \{ \frac{\lambda}{\alpha } \cdot \log \frac{1}{{C_1  \cdot {r_0(L)} }},\log \frac{2}{{{r_3}}}\}$ such that  for
all $x \in L$ and all $\vt\in\fa^+$ with $\lfloor
 \vt\rfloor >T$ we have:
\eq{estimate 2}{
\left|
I_{f,\psi}(g_\vt,z) - \int_H
f\,
 \int_X\psi\,
\right|\le \tilde C e^{-\lambda
\lfloor
 \vt\rfloor}\,.
}
where
$$\tilde C (f,\psi,\rho) \ll \max (\sup|\psi| ,\|\psi\|_{\ell{,2}} ,\|\psi\|_{\Lip}) \cdot \| f\|_{C^\ell} $$
and
\eq{lambda}{{\lambda_L} := \inf \{ \left\| {g\vw} \right\|:\pi (g) \in L,\,\vw \in { \bigwedge ^j}({\Z^k}) \setminus \{ 0\}   \,,\,j = 1, \cdots ,k - 1\} .}}
\end{thm}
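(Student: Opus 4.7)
The plan is to adapt the argument from \cite{KM4} for effective equidistribution of translates of $P$-orbits, making the implicit constants explicit. The key observation, taken from \cite{KM4}, is that although $P$ typically fails to be the full unstable horospherical subgroup of $g_t^{\vr,\vs}$ (they agree iff $\vr=\vm$ and $\vs=\vn$), $P$ \emph{is} the full unstable horospherical subgroup relative to the auxiliary element
$$
\tilde g := \diag\big(e^{1/m},\dots,e^{1/m},e^{-1/n},\dots,e^{-1/n}\big) \in G.
$$
Consequently the block lower-triangular subgroup $\tilde P := H^-(\tilde g) H^0(\tilde g)$ provides a local product decomposition $G \simeq \tilde P \cdot P$, and plays the role of the complement to $P$ in the thickening step, analogous to the role of $\tilde H$ in the proof of Theorem \ref{thmheep}.

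First I would note that $(X,g_t^{\vr,\vs})$ is exponentially mixing, a standard consequence of Howe--Moore decay of matrix coefficients on $SL_{m+n}(\R)$. Next, given $f\in \cic(P)$ with $\supp f\subset B^P(1)$ and $\psi\in C^\infty_2(X)$ with $\int_X\psi\, d\mu = 0$, I would smooth $I_{f,\psi}(g_t^{\vr,\vs},x)$ in Margulis style: pick a nonnegative bump $\theta$ on $\tilde P$ of integral one, set $F(\tilde p\cdot p) := f(p)\theta(\tilde p)$ using the local product structure, and apply the exponential mixing estimate \equ{em} to the smooth function on $X$ obtained by pushing $F$ forward via $\pi_x$. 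As in the proof of Theorem \ref{thmheep}, the injectivity of $\pi_x$ on $\supp F$ dictates the lower bound on $t$ in terms of $r_0(x)$ appearing in \equ{conditionont}. This yields
$$
\int_G F(g)\,\psi(g_t^{\vr,\vs} g x)\, d\mu^G(g) = \int_P f\,d\nu \cdot \int_X\psi\,d\mu + O\!\left(e^{-\gamma t}\,\|F\|_{\ell,2}\,\|\psi\|_{\ell,2}\right),
$$
while comparing this integral to $I_{f,\psi}(g_t^{\vr,\vs},x)$ introduces an error of order $\|\psi\|_{C^1}\|f\|_1 \cdot \sup_{\tilde p\in\supp\theta}\dist(g_t^{\vr,\vs}\,\tilde p\, g_{-t}^{\vr,\vs},\,e)$.

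The main obstacle, distinguishing this proof from that of Theorem \ref{thmheep}, is that $\tilde P$ is not uniformly contracted by $g_t^{\vr,\vs}$: while the $H^-(\tilde g)$-factor is contracted, the $H^0(\tilde g)$-factor (block diagonal) contains directions with strictly positive $\Ad g_1^{\vr,\vs}$-eigenvalue whenever the $i_k$'s or the $j_\ell$'s are not all equal. To keep the displacement error bounded one must thicken $\theta$ with direction-dependent radii: of order $\epsilon$ in the $g_t^{\vr,\vs}$-stable and neutral directions of $\tilde P$, but a smaller radius of order $\epsilon\,e^{-\lambda^+ t}$ in the $g_t^{\vr,\vs}$-expanding directions of $\tilde P$, where $\lambda^+$ is an explicit upper bound on the relevant eigenvalues of $\Ad g_1^{\vr,\vs}$. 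This inflates $\|\theta\|_{\ell,2}$ by a factor of the form $e^{c\lambda^+ t}$ for an explicit $c$ depending on $\ell$ and the dimensions involved.

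Finally, balancing the displacement error against the inflated mixing error by choosing $\epsilon = e^{-\beta t}$, as in the proof of Theorem \ref{thmheep}, yields an estimate of the form required by \equ{eep}, with an explicit $\lambda > 0$ depending on $\gamma$, $\ell$, the constant $\alpha$ from \equ{defalpha}, and the dimensions $m,n$. The condition \equ{conditionont} on $t$ emerges naturally from the injectivity requirement together with the need for $\epsilon = e^{-\beta t}$ to be small enough that the thickened support lies in an embedded ball around $x$.
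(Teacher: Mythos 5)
Your observation that $P$ is the full unstable horospherical subgroup of the auxiliary element $\tilde g = \diag(e^{1/m},\dots,e^{-1/n},\dots)$ is correct, and the local product $G \simeq \tilde P \cdot P$ with $\tilde P = H^-(\tilde g)H^0(\tilde g)$ is available. But your route from there is genuinely different from the paper's, and it has a gap that you acknowledge but do not resolve.

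You propose to thicken $f$ directly in the $\tilde P$-directions and apply exponential mixing once, handling the fact that $g_t^{\vr,\vs}$ expands part of $H^0(\tilde g)$ by using anisotropic radii --- width $\epsilon$ in the non-expanding directions of $\tilde P$, width $\epsilon\,e^{-\lambda^+ t}$ in the $d'$ expanding ones. The problem is quantitative: this inflates $\|\theta\|_{\ell,2}$ (and hence $\|F\|_{\ell,2}$) by $e^{(\ell+d'/2)\lambda^+ t}$, so the mixing error $e^{-\gamma t}\|F\|_{\ell,2}\|\psi\|_{\ell,2}$ only decays if $\gamma > (\ell + d'/2)\lambda^+ + (\ell + \text{const})\beta$. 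This requires $\gamma > (\ell + d'/2)\lambda^+$ before you even start to choose $\beta > 0$, and that inequality is not guaranteed: $\gamma$ and $\ell$ are fixed by the mixing hypothesis \equ{em}, while $\lambda^+$ and $d'$ depend on the weights $\vr,\vs$ and can be close to $1$ and order $(m^2+n^2)/2$ respectively for lopsided weights. So for a range of weights the balance has no positive solution and the argument collapses; yet Theorem \ref{1.5} is claimed for \emph{all} weights satisfying \equ{simplex}.

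The paper sidesteps this entirely by a different device. It factors $g_t^{\vr,\vs} = a_t b_t$ with $b_t = \diag(e^{\alpha t/2m},\dots,e^{-\alpha t/2n},\dots)$ the rescaled ``unweighted'' element and $a_t$ the leftover diagonal; one checks via \equ{contr} that $a_{-t}(\cdot)a_t$ \emph{contracts} $P$, so all the thickening happens inside $P$ at a rate controlled by $\alpha$, with no anisotropy and no fight against expansion. The dynamics in the mixing estimate is run with $b_t$, for which $P$ \emph{is} the full unstable horospherical subgroup, so Theorem \ref{th1} applies verbatim at the translated base points $a_t h x$. The price the paper pays is that it must control the injectivity radius (equivalently $\delta(a_t h x)$) of those translated base points uniformly over most $h\in B^P(2)$ --- this is precisely what the quantitative non-divergence input (Proposition \ref{qn}, in turn from \cite{KM4} and \cite{BKM}) provides. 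Your write-up omits non-divergence altogether, which is consistent with your single-mixing-application route, but that route, as argued above, does not close. If you want to salvage your approach, you would need an a priori lower bound for the mixing rate $\gamma$ in terms of $\ell$, $d'$ and $\lambda^+$ that is not available in general, so you should instead adopt the $a_t b_t$-factorization plus non-divergence strategy.
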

 {Recall that $X$ can be identified with the space of unimodular lattices in
${\mathbb{R}}^{{d}}$  via $g\Gamma \mapsto g\Z^{{d}}$.} 
 It will be useful to relate the injectivity radius $r_0(x)$ of an element $x = g\Gamma\in X$ 
 with the function 
{ \eq{defdelta}{\delta(g\Gamma) := \inf_{\vv\in\Z^{{d}}\nz}\|g\vv\|.}}Here $\|\cdot\|$ stands for some norm on $\R^{{d}}$; the implicit constants in the statements below will depend on the choice of the norm.
%
{
\begin{lem}\label{injectivity1} There exist ${C_1},{C_2} > 0$ such that for any $x\in X$ one has
$$
{C_1} \delta(x)^{{d}} \le r_0(x)\le {C_2} \delta(x)^{ \frac {{d}}{{d}-1}}.
$$
\end{lem}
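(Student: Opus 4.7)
The key identity to exploit is $r_0(x) = \inf_{\gamma \in \Gamma \setminus \{e\}} \dist(e, g\gamma g^{-1})$ for $x = g\Gamma$, which follows from the definition: $\pi_x$ fails to be injective on $B(r)$ iff $B(r)$ contains a nontrivial element of the stabilizer $g\Gamma g^{-1}$. Near the identity this distance is equivalent (up to uniform factors) to the operator-norm distance $\|g\gamma g^{-1} - I\|$, so both inequalities reduce to comparing this norm with $\delta(x)$. The plan is to run this comparison through the successive minima $\lambda_1(x) \le \cdots \le \lambda_d(x)$ of $g\mathbb{Z}^d$, using that $\lambda_1 = \delta(x)$ and $\lambda_1 \cdots \lambda_d \asymp 1$ by Minkowski's second theorem.

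For the lower bound, I would fix any $\gamma \in \Gamma \setminus \{e\}$ and observe that $\|g(\gamma\vv - \vv)\| \le \|g\gamma g^{-1} - I\| \cdot \|g\vv\|$ for every $\vv \in \mathbb{Z}^d$. Applied to a basis $\vv_1, \dots, \vv_d$ of $\mathbb{Z}^d$ achieving the successive minima, this shows that each $\vv_i$ is either fixed by $\gamma$ or else $\|g\gamma g^{-1} - I\| \ge \lambda_1/\lambda_i$. Since $\gamma \ne I$ cannot fix every $\vv_i$, I get $\|g\gamma g^{-1} - I\| \ge \lambda_1/\lambda_d$. Combined with the Minkowski estimate $\lambda_d \le C/\lambda_1^{d-1}$ (coming from $\lambda_1^{d-1}\lambda_d \le \lambda_1 \cdots \lambda_d \asymp 1$), this yields $r_0(x) \gg \delta(x)^d$.

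For the upper bound I would produce an explicit $\gamma$. Take a Minkowski-reduced basis $\vv_1, \dots, \vv_d$ of $\mathbb{Z}^d$ (so that $\|g\vv_i\| \asymp \lambda_i$) and let $\gamma \in \SL_d(\mathbb{Z})$ be the unipotent sending $\vv_d$ to $\vv_d + \vv_1$ and fixing the other basis vectors, obtained as $B(I+E_{1d})B^{-1}$ with $B = (\vv_1|\cdots|\vv_d) \in \GL_d(\mathbb{Z})$. The operator $g\gamma g^{-1} - I$ is rank one, sending $g\vv_d$ to $g\vv_1$ and killing the remaining $g\vv_i$; using the fact that the angle between $g\vv_d$ and $\mathrm{span}(g\vv_i : i \ne d)$ is bounded below for a reduced basis, I obtain $\|g\gamma g^{-1} - I\| \le C\lambda_1/\lambda_d$. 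The Minkowski-type bound $\lambda_d \ge c\lambda_1^{-1/(d-1)}$ (from $\lambda_1 \lambda_d^{d-1} \ge \lambda_1 \cdots \lambda_d \asymp 1$) then gives $\|g\gamma g^{-1} - I\| \ll \delta(x)^{d/(d-1)}$, so $r_0(x) \le C_2 \delta(x)^{d/(d-1)}$.

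The main technical point will be the bounded-angle property of a reduced basis used in the upper bound, a classical but nontrivial fact in the geometry of numbers; additionally, the equivalence between Riemannian distance near the identity and matrix operator norm should be invoked with care to make the implicit constants uniform.
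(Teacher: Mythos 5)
Your argument is correct and, for the upper bound, follows the same route as the paper: both construct the shear $\gamma$ fixing $\vv_1,\dots,\vv_{d-1}$ and sending $\vv_d\mapsto\vv_d+\vv_1$, then estimate its distance from $e$ via Minkowski's second theorem. You are more careful than the paper at two points: you work with a genuine Minkowski-reduced lattice basis, which guarantees that the conjugated shear actually lies in the stabilizer $g\Gamma g^{-1}$, and you explicitly invoke the bounded-angle property of a reduced basis to go from the rank-one structure of $g\gamma g^{-1}-I$ to the operator-norm bound $\ll\lambda_1/\lambda_d$; the paper's proof just takes "vectors realizing the first and the last minimum" and directly asserts $\|g-I\|_{op}\ll\vre^{1+1/(d-1)}$, glossing over both issues. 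For the lower bound the paper cites \cite{KM4} and \cite{BK}, whereas you give a self-contained successive-minima argument ($\gamma\neq e$ must move some reduced basis vector, and then $\|g\gamma g^{-1}-I\|_{op}\gg\lambda_1/\lambda_d\gg\lambda_1^{d}$); this is in effect a reconstruction of the cited proof, and it is correct.
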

\begin{proof} The lower estimate can be found in \cite[Proposition 3.5]{KM4} or \cite[Lemma 3.6]{BK}. To prove the upper estimate, {take $\|\cdot\|$ to be the Euclidean norm,} suppose $\delta(x) = \vre$, {and let $\lambda_1,\dots,\lambda_{{d}}$ be the successive minima of the lattice $x$. Let $\vv_1,\dots,\vv_{{d}}$ be vectors realizing the first and the last minimum of $x$ respectively, and  take $g$ to be an element of $G$ which fixes $\vv_1,\dots,\vv_{{d}-1}$ and sends $\vv_{{d}}$ to $\vv_{{d}} + \vv_1$. Then $gx = x$, and, since   $\|\vv_1\| = \vre$ and  $\|\vv_{{d}}\| \ge  \vre^{-\frac{1}{{d}-1}}$,  it follows that} 
$${\dist}(g,e) \ll \|g - I\|_{op} \ll 
\vre ^{1+\frac{1}{{d}-1}} = \delta(x)^{\frac{{d}}{{d}-1}},$$
{(here and hereafter $\|\cdot\|_{op}$   refers to the 
operator norm as a linear transformation of $\R^{{d}}$)}, finishing the proof.
\end{proof}}
The next ingredient of the proof is quantitative nondivergence of translates of $P$-orbits. Let us denote by $\fa^+$
the set of ${d}$-tuples $\vt = (t_1,\dots,t_{{{d}}})\in \br^{{d}}$
such that
$$
t_1,\dots,t_{{d}} > 0
\quad \text{and}\quad 
\sum_{i = 1}^m t_i =\sum_{j = 1}^{n} t_{m+j} \,,
$$ 
and for $\vt\in\fa_+$ define
$$g_\vt := \diag(e^{t_1}, \ldots,
e^{ t_m}, e^{-t_{m+1}}, \ldots,
e^{-t_{{d}}})\in G$$
and 
$$
\lfloor \vt\rfloor  := \min_{i = 1,\dots,{d}} t_i. 
$$ 
 {The following statement about quantitative non-divergence of $g_\vt$-translates of $P$ orbits in $X$ was proved in \cite[Corollary 3.4]{KM4}: for any 
compact $L\subset X$ and any ball $B\subset P$  centered at 
$e$ there exist constants 
$T = T(B,L)$ and $C = C(B,L)$ such that for every
$0 < \vre < 1$, any $x\in L$ and any
$\vt\in\fa^+$ with $\lfloor \vt\rfloor \ge T$ one has
$$
\nu\left(\big\{{h} \in B:   \delta(g_{\vt}{h} x) < \vre
\big\}\right)
\le C
\vre^{\frac{1}{mn({d}-1)}} \nu(B)\,.
$$
For our purposes we need an effective version:
\begin{prop}\label{qn}
There exist constants $C_3, C_4, C_5$ such that for every
$0 < \vre < 1$, any $x\in X$ and any
$\,\vt\in\fa^+$ with ${\lfloor \vt\rfloor \ge  {C_3 + C_4 \log\frac{1}{r_0({x})  }}}$ it holds that
\eq{nondiv in}
{\nu\big( \{{h} \in B^P(2) : \delta(g_{\vt}{h} x) < \vre
\}\big)
\le C_5 
\vre^{\frac{1}{mn({{d}}-1)}} 
\,.
}
\end{prop}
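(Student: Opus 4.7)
The plan is to revisit the proof of \cite[Corollary 3.4]{KM4} and track constants, in order to get an explicit dependence on $x$ through the injectivity radius $r_0(x)$. The main tool is the Kleinbock--Margulis quantitative non-divergence theorem for polynomial-like maps into the space of lattices (see e.g.\ \cite[Theorem 5.2]{KM1}), applied to the family $h\mapsto g_\vt hx$ on $B^P(2)$. The argument splits into (a) verifying the \emph{goodness} hypothesis, (b) verifying the \emph{non-planarity} hypothesis with an explicit lower bound in terms of $r_0(x)$, and (c) converting the resulting estimate into the claimed power of $\vre$.

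I would first parametrize $P$ by $M_{m,n}(\R)$ via $A\mapsto u_A:=I_d+E_A$, where $E_A$ has $A$ in its upper right $m\times n$ block and zeros elsewhere. For each $j\in\{1,\dots,d-1\}$ and each $\vw\in\bigwedge^j\R^d\setminus\{0\}$, the map $A\mapsto(\bigwedge^j g_\vt u_A)\bigwedge^j g\,\vw$ is polynomial of degree at most $j\le d-1$ in the $mn$ entries of $A$. The standard estimate for polynomials then shows that $A\mapsto\|(\bigwedge^j g_\vt u_A)\bigwedge^j g\,\vw\|$ is $(C,\alpha)$-good on the parameter domain corresponding to $B^P(2)$, for some universal $C>0$ and $\alpha=1/(mn(d-1))$.

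The heart of the argument is the non-planarity step: I need to show that for every $j\in\{1,\dots,d-1\}$, every $x=g\Gamma\in X$, and every $\vw\in\bigwedge^j\Z^d\setminus\{0\}$,
\[
\sup_{h\in B^P(2)}\|(\bigwedge^j g_\vt h)\,\bigwedge^j g\,\vw\|\,\gg\,e^{\lfloor\vt\rfloor}\Delta_j(x),\qquad\Delta_j(x):=\inf_{\0\ne\vw\in\bigwedge^j\Z^d}\|\bigwedge^j g\,\vw\|.
\]
This would be verified case by case. For $j=1$, setting $\vv':=g\vv$ with $\vv\in\Z^d\setminus\{0\}$, I would split according to whether the $\ell^\infty$-norm of $\vv'$ is attained in the first $m$ coordinates (in which case already at $h=e$ the vector $g_\vt\vv'$ has a coordinate of size $\gg e^{\lfloor\vt\rfloor}\|\vv'\|$) or in the last $n$ coordinates (in which case varying a single entry $A_{i_0j_0}$ of $A$ inside $B^P(2)$ makes the $i_0$-th coordinate of $g_\vt u_A\vv'$, which equals $e^{t_{i_0}}(\vv'_{i_0}+\sum_jA_{i_0j}\vv'_{m+j})$, attain a value of size $\gg e^{\lfloor\vt\rfloor}\|\vv'\|$). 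The same method applied to $\bigwedge^j\R^d$, selecting a multi-index $I=I_+\cup I_-$ (with $I_+\subset[1,m]$, $I_-\subset[m+1,d]$) on which $\bigwedge^jg_\vt$ expands at rate at least $e^{\lfloor\vt\rfloor}$ and on which the relevant component of $\bigwedge^jg\,\vw$ (after possibly using $u_A$ to shift mass) is comparable to $\|\bigwedge^jg\,\vw\|$, gives the higher-$j$ estimate.

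With the non-planarity estimate in hand, Minkowski's second theorem gives $\Delta_j(x)\gg\delta(x)^j$, and Lemma \ref{injectivity1} gives $\delta(x)\gg r_0(x)^{1/d}$; since $j\le d-1$ and we may assume $r_0(x)\le 1$, this yields $e^{\lfloor\vt\rfloor}\Delta_j(x)\gg e^{\lfloor\vt\rfloor}r_0(x)$. Choosing $C_3,C_4$ so that $\lfloor\vt\rfloor\ge C_3+C_4\log(1/r_0(x))$ forces this lower bound to exceed a suitable universal constant, the Kleinbock--Margulis theorem then produces
\[
\nu\bigl(\{h\in B^P(2):\delta(g_\vt hx)<\vre\}\bigr)\,\ll\,\vre^{\alpha}=\vre^{1/(mn(d-1))},
\]
which is the required bound. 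The main obstacle is the non-planarity step: one must verify the $e^{\lfloor\vt\rfloor}$-expansion uniformly over $\vw\in\bigwedge^j\Z^d$ and $j\in\{1,\dots,d-1\}$, while the rest amounts to routine effective bookkeeping of the argument already present in \cite{KM4}.
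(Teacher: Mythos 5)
Your proposal follows the same overall strategy as the paper --- verify goodness and non-planarity for the family $h\mapsto g_\vt hx$, then invoke Kleinbock--Margulis quantitative non-divergence --- but diverges in two places that are worth flagging.

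First, the step you single out as ``the main obstacle,'' namely the uniform $e^{\lfloor\vt\rfloor}$-expansion estimate
\[
\sup_{h\in B}\bigl\|(\textstyle\bigwedge^{j}g_\vt h)\,\vw\bigr\|\;\gg_B\; e^{C_7\lfloor\vt\rfloor}\|\vw\|
\qquad\text{for all }\vw\in\textstyle\bigwedge^{j}\R^d,\ j=1,\dots,d-1,
\]
is not something the paper re-derives: it is taken verbatim from \cite[Lemma 3.2]{KM4}, and the paper's proof of the present proposition consists essentially of plugging that lemma and \cite[Theorem 3.1]{KM4} together with Minkowski's bound $\inf_{\vw\in\bigwedge^k\Z^d\ssm\{0\}}\|g\vw\|\ge C_8\delta(x)^k$ and Lemma \ref{injectivity1}. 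Your ``case by case'' argument for higher $j$ is therefore attempting to re-prove an already-cited lemma; since Proposition \ref{qn} is stated for an $\Ad$-diagonalizable flow whose exact form is fixed in \eqref{eq:generalgt}, the cleaner route is to cite \cite[Lemma 3.2]{KM4} directly and spend the effort only on making its $x$-dependence explicit through $r_0(x)$, which is precisely what the paper does.

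Second, there is a genuine (if easily repaired) error in your use of Lemma \ref{injectivity1}. You write ``Lemma \ref{injectivity1} gives $\delta(x)\gg r_0(x)^{1/d}$,'' but that is the wrong direction: the inequality $C_1\delta(x)^d\le r_0(x)$ yields the \emph{upper} bound $\delta(x)\ll r_0(x)^{1/d}$. The lower bound on $\delta(x)$ in terms of $r_0(x)$, which is what you actually need, comes from the other half of the lemma, $r_0(x)\le C_2\delta(x)^{d/(d-1)}$, giving $\delta(x)\ge(r_0(x)/C_2)^{(d-1)/d}$ and hence $\delta(x)^{d-1}\ge(r_0(x)/C_2)^{(d-1)^2/d}$. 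With this correction the chain $\Delta_j(x)\gg\delta(x)^{d-1}\gg r_0(x)^{(d-1)^2/d}$ still forces $e^{\lfloor\vt\rfloor}\Delta_j(x)$ past a universal threshold once $\lfloor\vt\rfloor\ge C_3+C_4\log(1/r_0(x))$; only the value of $C_4$ changes (to $\frac{(d-1)^2}{d\,C_7}$ up to constants), so the proposition itself is unaffected, but as written the inference does not follow from the lemma you cite.
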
}
{\begin{proof} 
According to \cite [Theorem 3.1] {KM4}, which is  a special case of general quantitative non-divergence result  \cite[Theorem 6.2]{BKM},  there exists an explicit constant 
$C_6 > 0$, depending only on $m$ and $n$, such that for every ball $B\subset P$, any $x = g\Z^{{d}}\in X$, any
$\vt\in\fa^+$ and any  $0 < \vre < 1$ not greater than
\eq{defrho}{
c :=  \inf_{\substack{\vw \in {\bigwedge}^{k}( \Z^{{d}}) \ssm \{0\} \\ {k}=1,\dots,{{d}}-1}}\ { \sup_{{h}\in B}\  \|g_{\vt}{h} g \vw\|,}}  it holds that 
  $${\nu\big( \{{h} \in B : \delta(g_{\vt}{h} x) < \vre
\}\big)
\le C_6 
(\vre/{c})^{\frac{1}{mn({d}-1)}}}\nu(B).$$ 
  On the other hand,  \cite[Lemma 3.2]{KM4} asserts the existence of $C_7 > 0$ and, for each ball $B\subset P$, a constant $C_B$ such that  for  any
$\vt\in\fa^+$ and any $\vw \in {\bigwedge}^{k}( \R^{{d}})$, $ {k}=1,\dots,{{d}}-1,$ one has 
$$\sup_{{h} \in B} \big\| g_{\vt}{h}\vw \big\| \ge 
C_Be^{C_7\lfloor \vt\rfloor}\|\vw\|.$$
Also, by Minkowski's Lemma there exists $C_8 > 0$ such that
$$\inf_{{\vw \in {\bigwedge}^{k}( \Z^{{d}}) \ssm \{0\}} }{   \|g \vw\|} \ge C_8\delta(x)^{{k}}.$$
Therefore $c$ as in \equ{defrho} is not less than \eq{boundonrho}{C_Be^{C_7\lfloor \vt\rfloor} C_8\delta(x)^{{{d}}-1} \ge C_8C_Be^{C_7\lfloor \vt\rfloor} \left(\frac{r_0(x)}{C_2}\right)^{\frac{({{d}}-1)^2}{{d}}}}
(the last inequality holds in view of Lemma \ref{injectivity1}). Now take $B = B^P(2)$ and choose $\vt$ so that the right hand side of \equ{boundonrho} is not less than $1$; equivalently, such that 
$$
\lfloor \vt\rfloor \ge \frac1{C_7} \log \frac{C_2^{\frac{({{d}}-1)^2}{{d}}}}{C_8C_{B^P(2)}}  + \frac{({{d}}-1)^2}{{{d}}C_7} \log\frac1{r_0(x)}.
$$
Then \equ{nondiv in} will hold for any $0 < \vre < 1$ with {$C_5 = {C_6} \cdot \nu\big(B^P(2)\big).$} 
\end{proof}}

\begin{proof}[Proof of Theorem \ref{1.5}]
Write $g_t=a_tb_t$, where
$$a_t=\diag(e^{(i_1-\frac{\alpha}{2m})t},\dots,e^{(i_m-\frac{\alpha}{2m}t},e^{(-j_1+\frac{\alpha}{2n})t},\dots,e^{(-j_n+\frac{\alpha}{2n})t})$$
and
$$b_t=\diag(e^{{\alpha} t/2m},\dots,e^{{\alpha} t/2m},e^{-{\alpha} t/2n},\dots,e^{-{\alpha} t/2n}),$$
{where $\alpha$ is as in \equ{defalpha}.}
Suppose we are given  
$f\in \cic({P})$ with $\supp f \subset B^{P}(1)$,  
$\psi\in
C^\infty_2(X)$ with $\int_X\psi\,d\mu = 0$, 
 and $x \in X$. 
 Put $r = e^{-\frac{\beta {\alpha}}{2}t}$, where 
$\beta$ is to be specified later, and, again using \cite[Lemma 2.2(a)]{KM4}, take a non-negative function $\theta$ supported on $B^{P}(r)$ such that
{\equ{theta} holds.}  
Since $\nu$ is translation-invariant, one can 
 write
$$
\aligned
I_{f,\psi}(g_t,x) &= 
\int_{P}
f(h)\psi(g_t {h}x)\,d\nu({h})
\int_{P}
\theta(y)\,d\nu(y)\\&= 
\int_{P}
\int_{P}
f\big(a_{-t}y a_t {h}\big)\theta(y)\psi\big(a_t b_t
a_{-t}ya_t{h}x \big)\,d\nu(y)\,d\nu({h})\\&=\int_{P}
\int_{P}
f\big(a_{-t}y a_t {h} \big)\theta(y)\psi\big(b_t y 
a_t {h}x \big)\,d\nu(y)\,d\nu({h})\,.
\endaligned
 $$
{Note that \eq{contr}{\min \left (i_1 - \frac{\alpha}{2m},\cdots,i_m-\frac{\alpha}{2m},j_1-\frac{\alpha}{2n},\cdots, j_n- \frac{\alpha}{2n} \right) \ge {\alpha}/2,}
therefore} 
$$\dist\big(e,a_{-t}{h}a_t \big)
\le e^{ -{\alpha} t}\dist(e,{h})
$$
for any ${h} \in {P}$. Also, as long as $\theta(y)\ne 0$, the supports of all  functions of the form  ${h} \mapsto
f\big(a_{-t}y a_t {h}\big)$ are contained in  $$B^{P}(1 +
e^{-( {\alpha}+\frac{\beta {\alpha}}{2})t}) \subset {\tilde B }:= B^{P}(2) \,.$$  
\ignore{For $\varepsilon >0$ define:
\eq{cusp}{U(\vre) := \pi\big(\big\{ g \in G \bigm| \| g
\vv \| \ge  {\varepsilon} \quad \forall\, \vv \in
\bz^{d}\nz\big\}\big)\,.}
By \cite[Proposition 3.5]{KM4}, there exists $c_3>0$ such that:
\eq{inj eq}{r_0(U(\vre)) \ge c_3 \cdot \vre ^{{{d}}}.}}
Define
\eq{inj1 eq}{
\vre : = \left(\tfrac {2}{{C_1}}e^{-\frac{\beta {\alpha}}{2} t} \right)^{1/{{d}}}\,,
}
where ${C_1}$ is as in Lemma \ref{injectivity1}, and let
$$A(x,t) := \big\{{h} \in \tilde B\mid \delta(a_t {h} x) < \vre
\big\}\,.$$ 
So, in view of 
{\equ{contr} and} Proposition \ref{qn}, for any $x\in X$ and any  
{\eq{bigt}{t \ge  \frac2\alpha\left({C_3 + C_4 \log\frac{1}{r_0({x})  }}\right)}}
one has
$${\nu\big(A(x,t)\big) 
{\,\le C_5} {\vre} ^{\frac{1}{mn({{d}}-1)}}
\,.
}$$

\ignore{for any $t > T_1(x):= \frac{2}{\alpha_1 \cdot {\alpha}}\log \frac{1}{c_3 \lambda_x}$ we have
\eq{measure in}{\nu(A) \ll 
{\vre} ^{\frac{1}{mn({{d}}-1)}} \nu(\tilde B)\,
\ll {\vre} ^{\frac{1}{mn({{d}}-1)}}}
where
$$\lambda_x:=\inf\big\{\|g\vw\|
\bigm|
\pi(g)=x,\ \vw \in
\bigwedge^j(\bz^{{{d}}})\nz, j = 1,\dots,{{d}}-1\big\}.$$
If we represent the norm of shortest vector in $x$ as $l_0$, we can conclude from Minkowski Lemma:
\eq{Min}{\lambda_x \ge (\frac{l_0}{\sqrt{m+n}})^{m+n}.}
Also, it's not hard to show for some constant $c_4,\alpha_2>0$ independent of $x$:
\eq{inj lattice}{r_0(x) \le c_4 \cdot {l_0}^{\alpha_2}.}
So, by combining \equ{Min} and \equ{inj lattice} we conclude that:
$$\lambda_x \ge \frac{r_0(x)^{(m+n)/ \alpha_2}}{c_5} \ge \frac{r_0(B(x,2))^{(m+n)/ \alpha_2}}{c_5}$$
where $c_5=c_4^{(m+n)/\alpha} \cdot \sqrt{m+n}^{m+n}.$ Therefore, the inequality \equ{measure in} holds for all $t>T_2(x):= a_1 +b \log \frac{1}{r_0(B(x,2))} > T_1(x)$, where $a_1=\frac{2}{\alpha_1 \cdot {\alpha}} \cdot \log \frac{c_5}{c_3}$ and $b_1=\frac{2(m+n)}{\alpha_1 \cdot \alpha_2\cdot {\alpha}}$.} Hence, 
{assuming \equ{bigt}}, the absolute value of 
$$
\int_{A(x,t)}\int_{P}
f\big(a_{-t}y a_t {h} \big)\theta(y)\psi\big(b_t y 
a_t {h}x \big)\,d\nu(y)\,d\nu({h})
$$
is 
$$ 
\ll 
{\vre}^{\frac{1}{mn({{d}}-1)}} \nu(\tilde B)\sup|f|\sup|\psi|\int_{P}
\theta\, d\nu
\ll 
\sup|f|\sup|\psi| \cdot e^{-\frac{{\beta {\alpha} }}{2 mn{{d}}({{d}}-1)}t}\,.
 $$
 Next, let us assume that ${h} \in \tilde B\ssm A(x,t)$. We are going to apply Theorem \ref{th1} with $b_t$ in place of $g_t$, $r = e^{-\frac{\beta {\alpha}}{2} t}$, $
a_t {h}x $ 
in place of $x$ and $$f_{{h}}(y)
:=  f\big(a_{-t}y a_t {h}\big)\theta(y)$$ in place of $f$. It is clear that $\supp\, f_{{h}} \subset B^{P}(r)$ for any ${h}$, i.e.\ condition (i) of Theorem~\ref{th1} is satisfied. Since $\delta(a_t {h}x) < \vre$ 
whenever 
${h} \notin A(x,t)$, condition (ii) is satisfied in view of 
Lemma \ref{injectivity1} and \equ{inj1 eq}. So we only need to require that $e^{-\frac{\beta {\alpha}}{2} t}$ is less than $\rho /2$.
Also, in view of \cite[Lemma 2.2(b)]{KM4} and \equ{theta}, {for any $\ell\in\Z_+$} we have $$\|f_{{h}}\|_{\ell{,2}} \ll \| f\|_{C^\ell} \|\theta\|_{\ell{,2}} 
\ll   
e^{\left(\ell + {\frac{mn}2}\right)\frac{\beta{\alpha}}{2}t } \| f\|_{C^\ell}.$$
This way, by using Theorem \ref{th1} we get, {for some $\gamma > 0$ and $\ell\in\Z_+$,}
$$
\aligned
\left|\int_{\tilde B \ssm A(x,t)}\int_{P}
f\big(a_{-t}y a_t {h} \big ) \theta(y)\psi\big(b_t y 
a_t {h}x\big)\,d\nu(y)\,d\nu({h})\right| \le 
\int_{\tilde B \ssm A(x,t)}\left|I_{f_{{h}},\psi}(b_t,a_t {h} x)
\right|\,d\nu({h})\\ \ll 
 \max \big( \|\psi\|_{{C^1}},  \|\psi\|_{\ell{,2}} \big ) \left(e^{-\frac{\beta {\alpha}}{2} t} 
{\|f_{{h}}\|_1} +
e^{\left(\ell + {\frac{{{d}^2}-1-mn}2}\right)\frac{\beta{\alpha}}{2}t }\|f_{{h}}\|_{\ell{,2}}
\,
{e^{-\gamma  t}} \nu(\tilde B)\right)
\\ \ll  { \max \big (\|\psi\|_{{C^1}},  \|\psi\|_{\ell{,2}} \big )}
\left(\sup|f|\cdot e^{-\frac{\beta{\alpha}}{2}  t} 
 +
\| f\|_{C^\ell}  \cdot 
e^{{ (2\ell + \frac{{{d}^2}-1}2)\frac{\beta {\alpha}}{2}t - \gamma t}  } 
 \right).\endaligned$$ 

By combining the two estimates above, we get that, as long as $t \  {\ggplus \ \log\frac{1}{r_0({x}) }}$,
$${\begin{split}
\left|
I_{f,\psi}(g_t,x)\right|& 
\ll \sup|f|\sup|\psi| e^{-\frac{{\beta {\alpha}}}{2mn{{d}}({{d}}-1)}t} \\
& +  
{ { \max \big ( \|\psi\|_{{C^1}},  \|\psi\|_{\ell{,2}} \big )}
\left(\sup|f|\cdot e^{-\frac{\beta{\alpha}}{2}  t} 
 +
\| f\|_{C^\ell}  \cdot 
e^{{ (2\ell + \frac{{{d}^2}-1}2)\frac{\beta {\alpha}}{2}t - \gamma t}  } 
 \right)}
  \\
& \ll {
\max\left (
{\left\| \psi  \right\|_{C^1}, \left\| \psi  \right\|_{\ell{,2}} }
\right)  {\left\| f \right\|_{{C^\ell }}} 
\cdot \max\left (e^{-\frac{{\beta {\alpha}}}{2mn{{d}}({{d}}-1)}t}, e^{ -\left(\gamma  - (2\ell + \frac{{{d}^2}-1}2)\frac{\beta {\alpha}}{2} \right)t} \right)}.
\end{split}}$$
{Choosing $\beta$  equalizing the
two exponents
above, that is 
$$\beta  =
 \frac {2 \gamma/\alpha} {\frac{1}{mn{{d}}({{d}}-1)}  + 2\ell+\frac{{{d}^2}-1}2},$$  
 will satisfy \equ{eep} with $$\lambda = \frac{{\beta {\alpha}}}{2mn{{d}}({{d}}-1)} =  \frac { \gamma} {1 + mn{{d}}({{d}}-1)(2\ell+\frac{{{d}^2}-1}2)},$$ which finishes the proof.} 
\end{proof}

\section{Weighted badly approximable matrices} \label{weighted badly}


{Now let us recall a connection between \da\ with weights and the action of $F = \{g_t\}$ as in  
\equ{generalgt} on the space $X$.}
It is shown in  \cite[Theorem 6.2]{K2} that $ A \in
{M_{m,n}}(\mathbb{R})$ is $(\vr,\vs)$-badly approximable iff
the orbit $\{ {g_t}{u_A}{\mathbb{Z}^{{k}}}: t  > 0\} $ is
bounded in $X $, where ${u_A} = \left( {\begin{array}{*{20}{c}}
{{I_m}}&A\\
0&{{I_n}}
\end{array}} \right)$. We want to make this equivalence quantitative. Recall that for $\p =
({p_1},\dots,{p_m})$ and  $\q =
({q_1},\dots,{q_n})$ we defined 
$${\left\| \p \right\|_\vr} =  {\max 
\left(\left| {{p_1}} \right|^{1/{i_1}},\dots,\left| {{p_m}} \right|^{1/{i_m}}\right)}\text{ and }{\left\| \q \right\|_\vs} =  {\max 
\left(\left| {{q_1}} \right|^{1/{j_1}},\dots,\left| {{q_n}} \right|^{1/{j_n}}\right)}.$$ 
Now,  for $\p \in \br^{m}$ and $\q \in \br^{n}$, if $\vv=(\p,\q)$ let us define the {\sl $(\vr,\vs)$-quasinorm} ${\left\| \vv \right\|_{\vr,\vs}}$ of $\vv$ by $${\left\| \vv \right\|_{\vr,\vs}}:=\max({\left\| \p \right\|^{1/m}_\vr},{\left\| \q \right\|^{1/n}_\vs}).$$
Then 
for ${x}  \in X $ let
\[{\delta _{\vr,\vs}}({x} )\mathop  := 
\mathop {\inf }\limits_{\vv \in {x} \ssm \{ 0\} }{\left\| \vv \right\|_{\vr,\vs}} ,\] 
and for $\varepsilon> 0 $ let us consider
\eq{uepsilonweights}{{U_{\vr,\vs}(\varepsilon) } := \left\{ {{x}  \in X:{\delta _{\vr,\vs}}({x} ) <\varepsilon } \right\}.}
Mahler's Compactness Criterion implies that a subset $K$ of $X$ is relatively compact if and only if the restriction of $\delta _{\vr,\vs}$ to $K$ is bounded away from zero (that is, $K$ is contained in the complement of $U_{\vr,\vs}(\varepsilon)$ for some $\varepsilon> 0 $). 

Note that in the case $\vr = \vm$ and $\vs = \vn$, the $(\vm,\vn)$-quasinorm is simply the sup norm on $\R^{{d}}$,  $\delta_{\vm,\vn}({x}){ = \delta(x)}$,
{and $U_{\vm,\vn}(\varepsilon)$ is the same as $ U(\vre)$ defined in \equ{uepsilon}}.  {Also it is easy to check that for arbitrary $\vr, \vs$ and any $x\in X$ one has} 
\eq{boundfordelta}{{\delta ({x} ) \ge \delta _{\vr,\vs}({x} )^{\max(m,n)}.}} Now we can state a quantitative form of \cite[Theorem 6.2]{K2}, {which is also a weighted version of \cite[Lemma 3.1]{BK}}:

\begin{lem}\label{Cusp}
For any $0<c<1$, $A \in \Bad_{\vr,\vs}(c)$ if and only if
\eq{emptyint}{\{ {g_t}{u_A}{\mathbb{Z}^{{d}}}:{\mkern 1mu}
{\mkern 1mu} t \ge 0\}  \cap {U_{\vr,\vs}(\varepsilon) } = \varnothing, } where
$\varepsilon  = {c^{{1}/{{d}}}}$.

\end{lem}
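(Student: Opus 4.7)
The plan is to reduce the lemma to a direct calculation relating the $(\vr,\vs)$-quasinorm of a vector in $g_t u_A \Z^d$ to the pair $(\|A\q+\p\|_\vr, \|\q\|_\vs)$, and then to an elementary optimization in $t$. First I would compute explicitly: for $\vv = (\p,\q)\in\Z^m\times\Z^n$, the vector $g_t u_A \vv$ has $\R^m$-component with coordinates $e^{i_k t}(p_k+(A\q)_k)$ and $\R^n$-component with coordinates $e^{-j_\ell t}q_\ell$. Plugging into the definitions of $\|\cdot\|_\vr$ and $\|\cdot\|_\vs$, the weights collapse to a single exponential factor, yielding
\[
\|g_t u_A \vv\|_{\vr,\vs} \;=\; \max\!\bigl(e^{t/m}\|A\q+\p\|_\vr^{1/m},\; e^{-t/n}\|\q\|_\vs^{1/n}\bigr).
\]

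Next I would dispose of the trivial case $\q=0$, $\p\ne 0$: then $\|\p\|_\vr\ge 1$, so the right-hand side is $\ge 1\ge \vre$ for any $t\ge 0$, and this case contributes nothing. For $\q\ne 0$, I would minimize the max over $t\in\R$ by equating the two exponentials; the crossover $t^\ast = \frac{mn}{m+n}\log\bigl(\|\q\|_\vs^{1/n}/\|A\q+\p\|_\vr^{1/m}\bigr)$ gives
\[
\min_{t\in\R}\;\max\!\bigl(e^{t/m}\|A\q+\p\|_\vr^{1/m},\; e^{-t/n}\|\q\|_\vs^{1/n}\bigr) \;=\; \bigl(\|A\q+\p\|_\vr\,\|\q\|_\vs\bigr)^{1/d}.
\]
This is the key identity: the minimal value matches $c^{1/d}=\vre$ precisely when the Diophantine quantity equals $c$.

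With this identity in hand, both implications follow in a short argument. For the ``if'' direction, assume \equ{emptyint} and fix $(\p,\q)$ with $\q\ne 0$. If $t^\ast\ge 0$, we can substitute this optimal $t$ into \equ{emptyint} and obtain $\|A\q+\p\|_\vr\,\|\q\|_\vs \ge \vre^d = c$. If instead $t^\ast<0$, then $\|A\q+\p\|_\vr > \|\q\|_\vs^{m/n}$, so $\|A\q+\p\|_\vr\,\|\q\|_\vs > \|\q\|_\vs^{d/n}\ge 1 > c$ since $\|\q\|_\vs\ge 1$ and $c<1$. Either way the badly approximable inequality holds. Conversely, if $A\in\Bad_{\vr,\vs}(c)$, then for $\q\ne 0$ the minimum over $t\in\R$ of the above max is at least $c^{1/d}=\vre$, so in particular the same bound holds for all $t\ge 0$; combined with the trivial $\q=0$ case this gives \equ{emptyint}.

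I do not expect any serious obstacle here: the content of the lemma is entirely the explicit form of $\|g_t u_A\vv\|_{\vr,\vs}$ together with the one-variable AM-GM type optimization. The only subtlety is restricting the optimization to $t\ge 0$ rather than $t\in\R$, which is handled by the observation that $\|\q\|_\vs\ge 1$ on the integer lattice and $c<1$, so the potentially ``bad'' regime $t^\ast<0$ is harmless.
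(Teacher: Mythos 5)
Your proof is correct and rests on the same underlying computation as the paper's, namely the identity
\[
\|g_t u_A (\p,\q)\|_{\vr,\vs}
= \max\bigl(e^{t/m}\|A\q+\p\|_\vr^{1/m},\ e^{-t/n}\|\q\|_\vs^{1/n}\bigr),
\]
but you organize it differently and, I would say, more transparently. The paper's forward direction ($\Bad\Rightarrow$ empty intersection) argues by a two-case analysis on whether the $\vs$-component is already large, and its backward direction picks a time $t$ making the $\vs$-component equal to $\vre_1^n$ for an auxiliary $\vre_1<\vre$ and then lets $\vre_1\to\vre$. You instead extract the closed form above and observe that the max, viewed as a function of $t\in\R$, is minimized at the crossover time $t^\ast$ with minimal value $(\|A\q+\p\|_\vr\,\|\q\|_\vs)^{1/d}$; this turns the Dani-type correspondence into a single explicit AM--GM-style optimization and dispenses with the $\vre_1\to\vre$ limit. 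The only point you elide is the degenerate case $\|A\q+\p\|_\vr=0$ (so $t^\ast=-\infty$): there the $\vs$-branch $e^{-t/n}\|\q\|_\vs^{1/n}$ tends to $0$ as $t\to\infty$, immediately contradicting \equ{emptyint}, so it costs one sentence and is not a real gap. Your observation that $\|\q\|_\vs\ge 1$ for nonzero integer $\q$ handles the restriction to $t\ge0$ exactly as the paper's proof implicitly does in its $\q=0$ and crossover-sign considerations.
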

\begin{proof}
First note that ${g_t}{u_A}{\mathbb{Z}^{{d}}}$ consists of vectors
of the form
\[
\begin{pmatrix}
{{e^{{i_1}t}}({{{p}}_1} + {A_1}{\vq})}\\
\vdots\\
{{e^{{i_m}t}}({{{p}}_m} + {A_m}{\vq})}\\
e^{ - {j_1}t}{q_1}\\
\vdots\\
{ {e^{ - {j_n}t}{{{q}}_n}}}
\end{pmatrix},\]
where    $A_1,\dots,A_m$ are the rows of $A$. 
Suppose that
\eq{ba}{{\left\| {A\vq + \p} \right\|_\vr}
 {\left\| \vq \right\|_\vs}
 \ge c} for all $\p \in
{\mathbb{Z}^m}$ and $\vq \in {\mathbb{Z}^n}\ssm \{ 0\}$.
Take an arbitrary $t \ge 0$. 
If  ${\left| {{e^{ - {j_\ell}t}}{q_j}}
\right|^{{1}/{{{	
j_k}}}}} \ge \varepsilon^n $ for some $1 \le k \le
n$, it follows that 
$${\left\| g_t\begin{pmatrix}A\vq + \p\\ \vq\end{pmatrix} \right\|_{\vr,\vs} \ge \varepsilon},$$ and we are done. So suppose that ${\left| {{e^{ - {j_k}t}}{q_k}}
\right|^{{1}/{{{ 
j_k}}}}}  = e^{-t}{\left| {{q_j}}
\right|^{{1}/{{{ 
j_k}}}}}  < \varepsilon^n $ for all $k$. Then we have  $ {{\left\| {{\vq}}
\right\|}_\vs} < {\varepsilon^n 
} e^{t}$. 
In view of \equ{ba}, there exists $1
\le k \le m$ such that
\[\left| {{A_k}\vq + {p_k}} \right|^{{1}/{{{i_k}}}} \ge \frac c
{\left\| \vq \right\|_s}
>  \frac {c
e^{-t}}{\varepsilon^n} 
,\]
hence
\[
{\left| {{e^{{i_k}t}}({A_k}\vq + {p_k})} \right|^{{1}/{{{
i_k}}}}}
= {e^{t}}{\left| {{A_k}q + {p_k}} \right|^{{1}/{{{i_k}}}}} \ge
\frac{c}{\varepsilon ^{n}} = \varepsilon ^{m}.
\]
This proves that if $\vq \ne 0$, then ${g_t}{u_A}{\mathbb{Z}^{{d}}} \notin {U_{\vr,\vs}(\varepsilon) }$. And if $\vq = 0$ and $\p\ne 0$, then
$$ {\left\| g_t\begin{pmatrix}A\vq + \p\\ \vq\end{pmatrix} \right\|_{\vr,\vs}} = {\left\| g_t\begin{pmatrix} \p\\ \0 \end{pmatrix} \right\|_{\vr,\vs}} \ge e^{t/m} {\left\| {\p} \right\|_{\vr,\vs}} \ge 1 \ge \varepsilon.$$
So ${g_t}{u_A}{\mathbb{Z}^{{d}}} \notin {U_{\vr,\vs}(\varepsilon) }$ holds in this case as well, and we are done.  

 Vice versa, 
assume \equ{emptyint}; that is, suppose that
 for any nonzero $(\p,\vq) \in \mathbb{Z}^{m+n}$ and
$t\ge 0$ we have
\eq{cusp equation}{{\left\| \begin{pmatrix}
{{e^{{i_1}t}}({{{p}}_1} + {A_1}{\vq})}\\
\vdots\\
{{e^{{i_m}t}}({{{p}}_m} + {A_m}{\vq})}\\
e^{ - {j_1}t}{q_1}\\
\vdots\\
{ {e^{ - {j_n}t}{{{q}}_n}}}
\end{pmatrix}\right\|_{\vr,\vs}} \ge \varepsilon }
Fix such $\p$ and $\q$, take an arbitrary $0 < \varepsilon_1 < \varepsilon$, 
and choose $t\ge 0$ so that 
\[\left\| \begin{pmatrix}
e^{ - {j_1}t}{q_1}\\
\vdots\\
{ {e^{ - {j_n}t}{{{q}}_n}}}
\end{pmatrix}\right\|_{\vs} = {e^{ - t}}{\left\| \vq \right\|_\vs = \varepsilon_1^n}.\]
Then by \equ{cusp equation}  for some $1 \le k\le m$ we must have
\[{\left| {{e^{{i_k}t}}({A_k}q + {p_k})} \right|^{1/{i_k}}} = {e^{t}}{\left| {{A_k}q + {p_k}} \right|^{1/{i_k}}} \ge \varepsilon^m .\]
Consequently
${\left\| {A\vq + \p} \right\|_\vr}
 {\left\| \vq \right\|_\vs}
 \ge  \varepsilon^m
  \varepsilon_1^n
 $, which, since $\varepsilon_1 < \varepsilon$ was arbitrary, implies that ${\left\| {A\vq + \p} \right\|_\vr}
 {\left\| \vq \right\|_\vs}
 \ge c$. 
Since $\p$ and $\vq$ were arbitrary, $A \in \Bad_{\vr,\vs}(c)$, which finishes the proof of the lemma.
\end{proof}

{We will also need} a lower 
bound for the Haar measure of
the {inner $r$-core of} the set ${U_{\vr,\vs}(\varepsilon)}$, {where $0 < \vre < 1$ and   $r$ is small enough.  
The first step is a weighted version of \cite[Proposition 7.1]{KM2}:}
\ignore{
\begin{defn}
For a function $\Delta$ on $X$, define the tail distribution
function ${\phi _\Delta }$ of $\Delta $ by:
\[{\Phi _\Delta }(z) = \mu (\{ x|\Delta (x) > z\} )\]

\end{defn}
We fix an $m$-tuple $r$ and an $n$-tuple $s$ with positive
components such that
\[\sum\limits_{i = 1}^m {{i_k} = 1 = \sum\limits_{j = 1}^n {{j_\ell}} } \]
We consider the function $\Delta $ on the space of unimodular
lattices of ${R^{{k}}}$ defined by:
\[\Delta ({x} ) = -\log \delta_{\vr,\vs}({x}) = \mathop {\max }\limits_{\vv \in {x} \ssm \{ 0\} } \log (\frac{1}{{{{\left\| \vv \right\|}_{\vr,\vs}}}})\]
Also note that in this case:
\eq{si2}{{\Phi _\Delta }(z) = \mu (U_{e^{-z}})}
}
\begin{prop}\label{weighted}
There exist ${{C_9}},{{C_{10}}} > 0$ depending only on ${d}$ such that for all $0 < \vre < 1$ one has
\eq{si}{
{{C_9}}
\vre^{{d}} \ge \mu\big({U_{\vr,\vs}(\varepsilon) }\big) 
 \ge {{C_9}} \vre^{{d}} - {{C_{10}}}\vre^{2{d}}
.}
\end{prop}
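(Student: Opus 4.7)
My plan is to apply Siegel-type mean value formulas on the space $X$ of unimodular lattices in $\R^{{d}}$, working with the count of \emph{primitive} short vectors rather than the full Siegel transform, in order to avoid the divergent contribution of parallel configurations.

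First I would unpack the quasi-norm ball $B_\vre := \{\vv \in \R^{{d}} : \|\vv\|_{\vr,\vs} < \vre\}$. Directly from the definition of $\|\cdot\|_{\vr,\vs}$, $B_\vre$ is the Cartesian product of intervals of length $2\vre^{m\,i_k}$ (in the $\p$-coordinates) and $2\vre^{n\,j_\ell}$ (in the $\q$-coordinates), giving $\operatorname{vol}(B_\vre) = 2^{{d}}\vre^{m+n} = 2^{{d}}\vre^{{d}}$. Since every effective weight lies in $(0,1]$, we have $\|k\vv\|_{\vr,\vs} \ge \|\vv\|_{\vr,\vs}$ for every integer $|k| \ge 1$, so $x \in U_{\vr,\vs}(\vre)$ if and only if $x$ contains a primitive vector in $B_\vre$. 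Setting $N(x) := \tfrac{1}{2}\#\{\vw \in x \text{ primitive} : \vw \in B_\vre\}$ to count the pairs $\{\pm\vw\}$ of primitive short vectors, we have $U_{\vr,\vs}(\vre) = \{N \ge 1\}$, and Siegel's primitive mean-value formula (valid for ${d} \ge 2$) yields
$$\mathbb{E}[N] = \frac{\operatorname{vol}(B_\vre)}{2\,\zeta({{d}})} = \frac{2^{{{d}}-1}}{\zeta({{d}})}\,\vre^{{d}} =: C_9\,\vre^{{d}}.$$
The upper bound $\mu(U_{\vr,\vs}(\vre)) \le \mathbb{E}[N] = C_9\vre^{{d}}$ is then immediate from Markov's inequality.

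For the lower bound I would invoke the elementary Bonferroni-type inequality $\mathbf{1}_{\{N \ge 1\}} \ge N - \binom{N}{2}$ (easily verified on non-negative integers), which after integration gives $\mu(U_{\vr,\vs}(\vre)) \ge C_9\vre^{{d}} - \tfrac{1}{2}\mathbb{E}[N(N-1)]$. Since two primitive vectors lying in distinct $\pm$-classes are automatically linearly independent, the quantity $N(N-1)$ is bounded above by one quarter of the number of ordered pairs of linearly independent vectors in $(x \cap B_\vre)^2$, so it suffices to establish
$$\int_X \#\bigl\{(\vv_1,\vv_2) \in (x \cap B_\vre)^2 : \vv_1, \vv_2 \text{ linearly independent}\bigr\}\, d\mu \ll \operatorname{vol}(B_\vre)^2 = 4^{{d}}\vre^{2{{d}}}.$$
This second-moment bound is the main technical hurdle; working with primitives from the outset is what makes it tractable, since the corresponding second moment of the full Siegel transform diverges in low dimensions due to lattices containing many collinear short vectors $\vv,2\vv,3\vv,\ldots$ in $B_\vre$. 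For ${d} \ge 3$ the required bound follows from Rogers' second mean-value formula. For ${d} = 2$ (which forces $m = n = 1$) I would instead use a Minkowski-type determinant argument: any linearly independent pair $\vv_1, \vv_2$ in a $2$-dimensional unimodular lattice satisfies $|\det(\vv_1, \vv_2)| \ge 1$ (it is a positive integer), while the constraints $\vv_i \in B_\vre$ force $|\det(\vv_1, \vv_2)| \le 2\vre^{i_1 + j_1} = 2\vre$, so no such pair can exist once $\vre < 1/2$, and the regime $\vre \ge 1/2$ is absorbed into $C_{10}$ by enlarging the constant. Combining these estimates yields $\mu(U_{\vr,\vs}(\vre)) \ge C_9\vre^{{d}} - C_{10}\vre^{2{{d}}}$, as claimed.
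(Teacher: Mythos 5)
Your proof is correct in substance but takes a genuinely different route from the paper's. Both handle the upper bound identically (Markov/Chebyshev applied to the primitive-vector count). For the lower bound, you invoke the Bonferroni inequality $\mathbf{1}_{\{N\ge1\}}\ge N-\binom{N}{2}$ and then estimate $\mathbb{E}[N(N-1)]$ by a second-moment bound over all linearly independent lattice pairs in $B_\vre\times B_\vre$, citing Rogers' second mean-value theorem for $d\ge3$ and handling $d=2$ separately via a determinant argument. The paper instead works directly with the first moment $\int\widehat\varphi\,d\mu$, splits the domain into $\{\widehat\varphi\le2\}$ and $\{\widehat\varphi>2\}$, and on the latter uses a geometric trick: by convexity of $D$, any two linearly independent primitive vectors in $D$ can be modified so as to form a \emph{primitive 2-tuple} lying in $D\times D$, whence $\widehat\varphi\le\tfrac12\psitwo$ pointwise on $\{\widehat\varphi>2\}$; the generalized Siegel formula $\int\psitwo\,d\mu=c_2\operatorname{vol}(D)^2$ then finishes. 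The paper's route is tidier: it avoids Rogers' formula entirely (using only the primitive $k$-tuple Siegel identity), and needs no separate treatment of $d=2$. Your route is more ``probabilistic,'' but the second-moment estimate you defer to Rogers is genuinely heavier machinery than the Siegel formula used in the paper, and it is precisely what your dichotomy between $d\ge3$ and $d=2$ is papering over: the sum over integer matrices in Rogers' formula diverges for $d=2$, which is why you must fall back on the Minkowski determinant argument. (Minor slip there: for $m=n=1$ the sidelengths of $B_\vre$ are $2\vre$, so $|\det(\vv_1,\vv_2)|<2\vre^2$, not $2\vre$; the conclusion for $\vre<1/\sqrt2$ is unaffected.) One further observation: you could have tightened the bound by counting only linearly independent pairs of \emph{primitive} vectors, but converting that count into one controlled by the Siegel formula for primitive 2-tuples requires exactly the convexity argument the paper supplies, which your write-up lacks.
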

\begin{proof}
{For $x\in X$ and $1\le k \le d$,  denote by $P^k({x})$ the set of all
{\sl primitive} (i.e.\ extendable
to a basis of ${x}$) ordered $k$-tuples $(\vv_1 , \dots,\vv_{k})$ of vectors in $x$.
Then}, given a function $\varphi $ on $\mathbb{R}^{{d}}$, for any $k = 1,\dots,{d}-1$ define a function
$\phid$
on 
{$X$} 
by $$
\phid ({x} ) := \sum\nolimits_{(\vv_1,\dots,\vv_k) \in P^k({x})} {\varphi (\vv_1,\dots,\vv_k)}. $$ 
{According to {a generalized Siegel's summation formula}  \cite[Theorem 7.3]{KM2},}  for any $1\le k \le {d}$ there exists a constant
$c_k$ dependent on $k$ and ${d}$ such that for any $\varphi  \in {L^1}(\mathbb{R}^{k{d}})$, 
\eq{siegel}{\int_{X } {\phid  }(\vv_1,\dots \vv_k) \,d x  = c_k \int_{\left( \mathbb{R}^{d} \right)^{k}} \,{\varphi \, d\vv_1\cdots d\vv_k}. }
The case ${k} = 1$ corresponds to the classical Siegel transform,
$$
\widehat\varphi ({x} ) := \phione({x}) = \sum\nolimits_{\vv \in P^1({x})} {\varphi (\vv)}, $$ 
{and Siegel's summation formula \cite{Si1}, $\int_{X } {\widehat\varphi  } \,d\mu  = c_1 \int_{\mathbb{R}^{{d}}} \,\varphi(\vv) \, d\vv$.} 

Take $0 < \varepsilon < 1$, denote by $D$ the region in $\br^{{d}}$  defined
by the following system of inequalities:
\[\begin{array}{l}
|{x_{{{\ell}}}}| < \varepsilon^{mi_{{\ell}}}\,\,\,\,\,\,\,\,\,\,\,\,\,\,1 \le {{\ell}} \le m, \\
{|{x_{{{m+\ell}} }}|<\varepsilon ^{nj_{{\ell}}} \,\,\,\,\,\,\,1 \le {{\ell}} \le {n}},
\end{array}\]
and by
$\varphi$ the characteristic function of $D$. Note that {the volume of $D$ is equal to $\vre^d$, and that}  \[
{x} \in U_{\vr,\vs}(\varepsilon) \Leftrightarrow {x}\cap D \ne
\{0\}. \]
The latter condition clearly implies that $D$ contains at least two primitive vectors 
in ${x}$. Therefore in view of  
{Siegel's formula} we have $$\mu\big(U_{\vr,\vs}(\varepsilon)\big) \le \frac{1}{2}\int_{X } {\widehat \varphi  } \,d\mu  = \frac{1}{2} c_1 \int_{\mathbb{R}^{{d}}} \,{\varphi \, d\vv} = {{\frac{1}{2} c_1}} \varepsilon^{{d}}.$$ 
For getting the lower bound, 
note that if two linearly independent primitive vectors $\vv_1$ and $\vv_2$ in ${x} \cap D$ do not form a primitive pair, then the line segment between $\vv_1$ and $\vv_2$ must contain another lattice point; and since $D$ is convex, this lattice point must be in $D$.  So one can easily see that whenever there exist at
least two linearly independent vectors in ${x}\,\cap\, D$, for any
$\vv_1\in P^1({x}) \cap D$ one can find $\vv_2\in {x}\cap D$ such
that $(\vv_1,\vv_2)$, as well as $(\vv_1,-\vv_2)$, belong to
$P^2({x})$. 
Therefore, if $\widehat \varphi  ({x} ) > 2$, one has
\[\widehat \varphi   ({x} ) = \# (P^1({x} ) \cap D) \le \frac{1}{2}\# \big({P^2}({x} ) \cap (D \times D)\big) {\,=\frac{1}{2}\psitwo ({x} )},\]
where $\psi$ is the characteristic
function of $D\times D$ in $\br^{2{k}}$. Hence, 
\[\begin{array}{l}
\int_{{X}} {\widehat \varphi } \, d\mu  = \int_{\{ {x} : \widehat \varphi  ({x} ) \le 2\} } {\widehat \varphi }\, d\mu + \int_{\{ {x} : \widehat \varphi  ({x} ) > 2\} } {\widehat \varphi   \, d\mu } \\
\,\,\,\,\,\,\,\,\,\,\,\,\,\,\,\,\,\, \le 2\mu (\{ {x} : 
\widehat \varphi ({x} ) = 2\} ) +
\frac{1}{2}\int_{\{ {x} : \widehat \varphi 
({x} ) > 2\} } {{\psitwo} 
d\mu }  \le 2\mu \big(U_{\vr,\vs}(\varepsilon)\big) +
\frac{1}{2}\int_{X} {{\psitwo} 
d\mu },
\end{array}\]
{which implies that}
$${2 \mu\big(U_{\vr,\vs}(\varepsilon)\big) \ge \int_{{X}} {\widehat \varphi} \, d\mu - \frac{1}{2}\int_{X} {{\psitwo} \,
d\mu }={{c_1}}\varepsilon^{{d}}-\frac{1}{2}\int_{X} {{\psitwo}} \,
d\mu .}$$
Using {the $k=2$ case of \equ{siegel}}
yields $\int_{X} {{\psitwo}} \,
d\mu =c_2\varepsilon^{2{d}}.$ Hence {\equ{si} holds with ${{C_{9}}}= \frac{1}{2}c_1$ and} ${{C_{10}}}= \frac{1}{4}c_2$.
\end{proof}

{Finally let us choose $C_{11} > 0$ such that for any $0 < r < 1$, 
$$
\max\left(\|g - I_d\|_{op}, \|g^{-1} - I_d\|_{op}\|\right) < C_{11} r\text{ whenever } g\in B^G(r).
$$
\begin{lem}\label{rcore} Let $0 < \vre < 1$ and \eq{smallr}{0 < r < \frac{2^\alpha - 1}{dC_{11}}\vre^{\max(m,n)}.} Then $$U_{\vr,\vs}(\varepsilon/2) \subset \sigma_r \big(U_{\vr,\vs}(\varepsilon)\big).$$
\end{lem}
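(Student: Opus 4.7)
The plan is to show that for any $x\in U_{\vr,\vs}(\vre/2)$ and any $g\in B^G(r)$, we have $gx \in U_{\vr,\vs}(\vre)$; from the definition of $\sigma_r$ this gives the desired inclusion $U_{\vr,\vs}(\vre/2)\subset \sigma_r\big(U_{\vr,\vs}(\vre)\big)$ immediately. Since $x\in U_{\vr,\vs}(\vre/2)$ means there is some $\vv\in x\setminus\{0\}$ with $\|\vv\|_{\vr,\vs}<\vre/2$, the natural candidate for a short vector in the translated lattice $gx$ is $g\vv$, and I would try to verify directly that $\|g\vv\|_{\vr,\vs}<\vre$.

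Unpacking the inequality $\|\vv\|_{\vr,\vs}<\vre/2$ into coordinate-wise inequalities gives $|v_k|<(\vre/2)^{m i_k}$ for $1\le k\le m$ and $|v_{m+\ell}|<(\vre/2)^{n j_\ell}$ for $1\le \ell\le n$; in particular, since all exponents are positive and $\vre/2<1$, we have $\|\vv\|_\infty<1$. I then write $g\vv = \vv + (g-I)\vv$ and apply the definition of $C_{11}$: the bound $\|g-I\|_{op}<C_{11}r$ combined with a standard passage from an operator-norm estimate to a coordinate-wise one yields $|((g-I)\vv)_i|\le dC_{11}r\,\|\vv\|_\infty < dC_{11}r$ for every coordinate $i$.

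The key computation is then, for each $k\le m$,
\[
|(g\vv)_k|\le |v_k|+|((g-I)\vv)_k|<(\vre/2)^{m i_k}+dC_{11}r,
\]
and one needs this to be strictly less than $\vre^{m i_k}$, which rearranges to the condition $dC_{11}r<\vre^{m i_k}-(\vre/2)^{m i_k}=(\vre/2)^{m i_k}(2^{m i_k}-1)$. The bounds $m i_k\ge \alpha$ and $m i_k\le m$ let one lower bound the right-hand side by a positive multiple of $(2^\alpha-1)\vre^m$; together with the analogous estimate on the $\q$-coordinates (with $n$, $j_\ell$ replacing $m$, $i_k$), the hypothesis $r<(2^\alpha-1)\vre^{\max(m,n)}/(dC_{11})$ makes all these coordinate-wise inequalities hold uniformly in $k$ and $\ell$. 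Taking the maximum over all coordinates completes the verification $\|g\vv\|_{\vr,\vs}<\vre$, and the lemma follows.

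The argument is essentially an elementary computation with the weighted quasinorm; the only delicate point is to juggle the asymmetric exponents $m i_k$ and $n j_\ell$ (which range over $[\alpha,\max(m,n)]$) so that both the $\vr$-side and the $\vs$-side are controlled uniformly by the single hypothesis on $r$. No deeper dynamics is needed here beyond the defining coordinate description of $U_{\vr,\vs}(\vre)$ and the choice of $C_{11}$.
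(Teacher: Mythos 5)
Your overall strategy --- take $\vv\in x\setminus\{0\}$ with $\|\vv\|_{\vr,\vs}<\vre/2$, write $g\vv = \vv + (g-I)\vv$, and verify $\|g\vv\|_{\vr,\vs}<\vre$ coordinate-by-coordinate --- is the same as the paper's. However, the inequality $|((g-I)\vv)_i|\le dC_{11}r\,\|\vv\|_\infty<dC_{11}r$ discards the factor $\|\vv\|_\infty$, and that is exactly what makes your chain fail to close. After this step you need $dC_{11}r<(\vre/2)^{m i_k}(2^{m i_k}-1)$ for every $k$, and this does \emph{not} follow from \equ{smallr}. For instance, take $m=n=1$ (so $\vr=\vs=(1)$, $\alpha=1$, $d=2$, $m i_1=1$): your condition reads $2C_{11}r<\vre/2$, i.e.\ $r<\vre/(4C_{11})$, whereas the hypothesis only gives $r<\vre/(2C_{11})$. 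The range $\vre/(4C_{11})\le r<\vre/(2C_{11})$ is permitted by the lemma but not covered by your estimate, so the claim ``a positive multiple of $(2^\alpha-1)\vre^m$'' is not enough --- the multiple matters.

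The fix is to keep the factor $\|\vv\|_\infty$, which by your own coordinate inequalities satisfies $\|\vv\|_\infty<(\vre/2)^\alpha$ (since $m i_k\ge\alpha$ and $n j_\ell\ge\alpha$), rather than bounding it crudely by $1$. This is precisely what the paper's proof does, writing $|(g\vv)_k|\le (1+C_{11}r)|v_k|+(d-1)C_{11}r\max_\ell|v_\ell|$. With the sharper bound, the required inequality becomes $dC_{11}r<(\vre/2)^{m i_k-\alpha}(2^{m i_k}-1)$, which \emph{does} follow from \equ{smallr}: writing $e=m i_k$ and using $\alpha\le e\le\max(m,n)$,
\[
(\vre/2)^{e-\alpha}(2^e-1)=\vre^{e-\alpha}\big(2^\alpha-2^{\alpha-e}\big)\ge\vre^{\max(m,n)-\alpha}\big(2^\alpha-1\big)\ge(2^\alpha-1)\vre^{\max(m,n)},
\]
the last inequality because $\alpha>0$ and $\vre<1$. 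The $\q$-coordinate estimates are identical with $n j_\ell$ in place of $m i_k$.
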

\begin{proof} 
Take $x
\in U_{\vr,\vs}(\varepsilon/2)$ and $g\in B^G(r)$. We know that there exists $\vv\in x\nz$ such that one of the following two conditions holds:
\begin{itemize}
\item[(1)] $|v_k| < (\vre/2)^{m i_k}$ for some $1\le k \le m$;
\item[(2)]  $|v_{m+k}| < (\vre/2)^{n j_k}$ for some $1\le k \le n$.
\end{itemize}
Assuming (1) and writing $g = (a_{k\ell})_{k,\ell = 1,\dots,d}$, one has
\begin{equation*}
\begin{split}
|(g\vv)_k| &= \Big|a_{kk} v_k + \sum_{\ell\ne k}a_{k\ell} v_\ell \Big| \le (1 + C_{11}r)(\vre/2)^{m i_k} + (d-1) C_{11}r(\vre/2)^{m\alpha}\\ 
& \le (\vre/2)^{m i_k} + dC_{11}r(\vre/2)^{m\alpha} \le \frac{\vre^{m i_k}}{2^{m\alpha}}\left(1 + dC_{11}r \vre^{-m}\right),
\end{split}
\end{equation*}
which is smaller than $\vre^{m i_k}$ in view of \equ{smallr}; hence $gx\in U_{\vr,\vs}(\varepsilon)$. The argument in case of (2) is similar.
\end{proof}}

{Now we can finish the}
\ignore{and  since by the definition of ${\Phi _\Delta }(\varepsilon)$, we have  there is a constant $E>0$ such for
sufficiently small $\delta>0$ we have
\[\mu ({U_\delta }) \ge E{\delta ^2}.\]}

\ignore{\begin{lem}\label{injectivity}
There exists ${E}_3 > 0$ depending only on ${d}$  such that  for any $0<\varepsilon<1$ the injectivity radius of $\partial_{1/2}\big(X
\ssm U_{\vr,\vs}(\varepsilon)\big)$ is not less than \gr{$
{{E_3}}
\varepsilon^{d   \max (m,n)}$.}
\end{lem}
\begin{proof} 
\gr{Let $0<\varepsilon<1$, and take $x
\in \partial_{1/2}\big(X
\ssm U_{\vr,\vs}(\varepsilon)\big).$ Then $x=g\Delta$ where $g\in B(1/2)$  and $\Delta
\notin U_{\vr,\vs}(\varepsilon)$; that is, for each $\vv\in \Delta\nz$ one has 
${\left\| \vv \right\|_{\vr,\vs} > {\varepsilon }.}$ 
The latter implies that 
\eq{inj1}{\left\| \vv \right\| > {\varepsilon }^{  \max(m,n)} \text{ for any }\vv\in \Delta\nz;} that is, $\Delta
\notin U(\vre^{ \max(m,n)}).$ Now define $E_4 := \sup_{ g\in B(1/2)} \|g^{-1}\|_{op}$,
a constant dependent only on the metric on $G$. It follows from \equ{inj1} that $\left\| g\vv \right\| > E_4^{-1}{\varepsilon }^{  \max(m,n)}$ for any $\vv\in \Delta\nz$; in other words, $x \notin U(E_4^{-1}{\varepsilon }^{ \max(m,n)}).$ In view of Lemma \ref{injectivity1} the injectivity radius of $x$ is not less than $C_1(E_4^{-1}{\varepsilon }^{  \max(m,n)})^{d},$ and  the choice $E_3 = C_1 E_4^{-d}$ finishes the proof of the lemma.}
\end{proof}}

\begin{proof}[Proof of Theorem \ref{main theorem 3}] 
{In view of Theorem \ref{1.5} and Lemma \ref{Cusp}, one can apply   \linebreak Theorem~\ref{Main Theorem1} to $P$  as in 
\equ{subgroup mix}
and conclude that 
for  any $c > 0$ and any \linebreak  $0<r<\min\left(r_0\left(\partial_{{1}/{2}}(X \ssm U_{\vr,\vs}(\varepsilon)\right),r''\right)$, it holds that
\eq{finalestimate}{\codim \Bad_{\vr,\vs}(c)   \gg \frac{{\mu \big({\sigma _{r}}{U_{\vr,\vs}(\varepsilon) }\big)}}{{\log  \frac{1}{r}  + \log  \frac{1}{{\mu \left({\sigma _{r}}{U_{\vr,\vs}(\varepsilon) }\right)}} }}}
where $\varepsilon = c^{{1}/{{d}}}$ and the implicit constant in \equ{finalestimate} is independent of $c$ but depends on $\vr,\vs$.  
Note that in view of  \equ{boundfordelta} we have $X \ssm U_{\vr,\vs}(\vre) \subset X \ssm U(\vre^{\max(m,n)})$, 
 thus
\begin{equation*}
\begin{split}
r_0\Big(\partial_{{1}/{2}}\big(X \ssm U_{\vr,\vs}(\varepsilon)\big)\Big) &\ge r_0\left(\partial_{{1}/{2}}(X \ssm U(\vre^{\max(m,n)})\right) \\ &\gg r_0\left(X \ssm U\Big(\frac1{1 + C_{11}/2}\vre^{\max(m,n)}\Big)\right) \ge \frac{C_1}{1 + C_{11}/2} \vre^{d\cdot \max(m,n)},\end{split}
\end{equation*} 
the last inequality being a consequence of Lemma \ref{injectivity1}. It follows that 
\equ{finalestimate} holds whenever \eq{conditiononr}{r <  \frac{C_1}{1 + C_{11}/2} \vre^{d\cdot \max(m,n)} \le r'' .}
Now define 
$$
c_0 := \min\left(\Big( \frac{1 + C_{11}/2}{C_1}r''\Big)^{1/\max(m,n)}, C_9/2C_{10}\right),$$ 
take $\vre < c_0^{1/d}$ and consider 
$$
r = \frac12\min\left(\frac{2^\alpha - 1}{dC_{11}},\frac{C_1}{1 + C_{11}/2} \right)\vre^{d\cdot\max(m,n)}.
$$
Then both \equ{smallr} and \equ{conditiononr} will hold, and thus the right hand side of \equ{finalestimate} is not less than 
$$
\frac{{\mu \big({U_{\vr,\vs}(\varepsilon/2) }\big)}}{{\log  \frac{1}{r}  + \log  \frac{1}{{\mu \left({U_{\vr,\vs}(\varepsilon/2) }\right)}} }} \ge  \frac{{\frac12 C_9(\vre/2)^d}}{{\log  \frac{1}{r}  + \log  \frac{1}{{\frac12 C_9(\vre/2)^d}} }} \gg   \frac{{\vre^d}}{{\log  \frac{1}{\vre}   }},$$ }
which finishes the proof.
\end{proof}

{\section{Concluding remarks}\label{c}}
\subsection{Precise estimates for the Hausdorff dimension}
Note that in view of the aforementioned result of Simmons \cite{Si}
and similar results for other dynamical systems (see  e.g, \cite{FP}), it is natural to expect that when $U$ is either a small ball or the complement of a large compact subset of $X$, the codimension of $E(F^+,U)$ is, as $U$ shrinks, asymptotic to a constant times the measure of $U$. That is, conjecturally there should not be a  logarithmic term in the right hand side of \equ{mainbound}. However it is not clear how to improve our upper bound, as well as how to 
establish a complimentary lower estimate for $\dim E(F^+,U)$, using the exponential mixing of the action. Such questions can be asked in other contexts, such as for expanding maps on manifolds, see e.g.\  \cite{AN} for a lower estimate improving on \cite{U1}.
\subsection{A dimension drop problem} Another interesting question is whether the conclusion of Theorem \ref{Main Theorem} holds without the assumption of compactness of $U^c$. It fact, it is not even known in general that the dimension of $E(F^+,U)$ is strictly smaller than the dimension of $X$ as long as $U$ is non-empty. 
In \cite{EKP} it was  established in the case when $G$ is a connected semisimple Lie group of real rank $1$.
One possible approach to this problem for non-compact \hs s of higher rank is to combine the methods of the present paper with estimates on the escape of mass for translates of measures on horospherical subgroups, as developed in \cite{KKLM}. 
%
This is a work in progress. 
{Recenly in \cite{GS}, by generalizing the methods used in \cite{KKLM} to arbitrary homogeneous spaces, it was shown that for any one parameter subgroup action on a homogeneous space, the Hausdorff dimension of the set of points with divergent trajectories is not full.}

\bibliographystyle{alpha}

\begin{thebibliography}{22}



\bibitem{AN} A.\,G.\ Abercrombie  and R.\ Nair, \textsl{An exceptional set in the ergodic theory of expanding maps on manifolds}, Monatsh.\ Math.\ {\bf 148} (2006), 1--17.

\bibitem{BK} R.\ Broderick and D.\ Kleinbock, \textsl{Dimension estimates for sets of uniformly
badly approximable systems of linear forms}, Int.\ J.\ Number Theory {\bf 11} (2015), no.\ 7, 2037--2054.

\bibitem{BKM}  V.\ Bernik, D.\ Kleinbock and G.\,A.\ Margulis, \textsl{Khintchine-type theorems  on
manifolds:  the convergence case for standard  and multiplicative
versions},  Internat.\ Math.\ Res.\ Notices {\bf 2001}. no.\ 9,    
453--486.

\bibitem{Bou} N.\ Bourbaki, \textsl{El\'ements de mathematique},
Livre VI: Integration, Chapitre 7: Mesure de 
Haar, Chapitre 8: Convolution et representations, Hermann, Paris, 1963.




\bibitem{dani}  S.\,G.\ Dani,
\textsl{Divergent trajectories of flows on
homogeneous spaces and Diophantine approximation},
J.\ Reine Angew.\ Math.\ {\bf 359} (1985), 55--89.




\bibitem{DM}
S.\,G.\ Dani and G.\,A.\ Margulis,
\textsl{Limit distributions of orbits of unipotent flows and values of
  quadratic forms},
in {\em I.\,M.\ Gelfand Seminar}, pp.\ 91--137,   Adv.\ Soviet Math., {\bf 16}, Part 1, Amer.\ Math.\
Soc.,
  Providence, RI, 1993.
  
 \bibitem{EKP}  M.\ Einsiedler, S.\ Kadyrov and A.\ Pohl, \textsl{Escape of mass and entropy for diagonal flows in
real rank one situations}, Israel J.\ Math.\ {\bf 210} (2015), no.\ 1, 245--295.  

  
 \bibitem{FP} A.\ Ferguson and M.\ Pollicott, \textsl{Escape rates for Gibbs measures}, Ergodic Theory Dynam.\ Systems {\bf 32} (2012), 961--988.
\bibitem{GS}
L.\ Guan and R.\ Shi, \textsl{Hausdorff dimension of divergent trajectories on homogeneous space}, preprint  (2018), {\tt  arxiv.org/abs/1805.07444}.



\bibitem{K} S.\ Kadyrov, \textsl{Exceptional sets in homogeneous spaces
and Hausdorff dimension}, Dyn.\ Syst.\ {\bf 30} (2015), no.\ 2, 149--157.



\bibitem{K1} D.\  Kleinbock,
\textsl{Nondense orbits of flows on homogeneous spaces},
Ergodic Theory Dynam.\ Systems {\bf 18} (1998), 373--396.

\bibitem{K2} \bysame, \textsl{Flows on homogeneous spaces and
Diophantine properties of matrices}, Duke Math.\ J. {\bf 95} (1998), 107--124.




\bibitem{KKLM}   S.\ Kadyrov, D.\ Kleinbock, E.\ Lindenstrauss  and G.\,A.\ Margulis, \textsl{Singular systems of linear forms 
and non-escape of mass in the space of lattices},  J.\ Anal.\ Math.\ {\bf 133} (2017),  253--277. 

\bibitem{KM1} D.\ Kleinbock and G.\,A.\ Margulis, \textsl{Bounded orbits of nonquasiunipotent flows on homogeneous spaces}, Sina\'i's Moscow Seminar on Dynamical Systems, 141--172, Amer.\ Math.\ Soc.\ Trans.\ Ser.\ 2, Amer.\ Math.\ Soc.\, Providence, RI, 1996.
\bibitem{KM2} \bysame, \textsl{Logarithm laws for flows on homogeneous spaces}, Invent.\ Math.\ {\bf 138} (1999), no.\ 3, 451--494.

\bibitem{KM4} \bysame, \textsl{On effective equidistribution of expanding translates of certain orbits in the space of
lattices}, in: Number Theory, Analysis and Geometry, Springer, New York, 2012, pp. 385--396.







\bibitem{KW}
D.\ Kleinbock and B.\ Weiss, \textsl{Dirichlet's theorem on Diophantine approximation
and homogeneous flows},  J.\ Mod.\ Dyn. {\bf 4} (2008), 43--62.




\bibitem{PV} A.\ Pollington and S.\ Velani, \textsl{On simultaneously badly approximable numbers,}
J.\ London Math.\ Soc. (2) {\bf 66} (2002), no.\ 1, 29--40.


\bibitem{Si1} C.\  L.\ Siegel, \textsl{A mean value theorem in geometry of numbers}, Ann. Math. 46 (1945), 340-347.
\bibitem{Si}  D.\ Simmons, \textsl{A Hausdorff measure version of the Jarn\'ik--Schmidt theorem in Diophantine approximation},   Math.\ Proc.\ Cambridge Philos.\ Soc.\ 
{\bf 164} (2018), no.\ 3. 413--459.



\bibitem{St}  A.\ Starkov, \textsl{Dynamical systems on homogeneous spaces}, Translations of Mathematical Monographs, {\bf 190}, American Mathematical Society, Providence, RI, 2000.

\bibitem{U1} M.\ Urba\'nski,\textsl{ The Hausdorff dimension of the set of points with nondense orbit under a hyperbolic dynamical system}, Nonlinearity {\bf 4} (1991), 385--397. 

\bibitem{Weg} H.\ Wegmann,  \textsl{Die Hausdorff-Dimension von
kartesischen Produktmengen in metrischen R\" aumen},  J.\ Reine
Angew.\ Math.\ {\bf 234} (1969), 163--171.







\end{thebibliography}

\end{document}